\documentclass[a4paper,11pt]{amsart}
\usepackage{amsmath}
\usepackage{cases}
\usepackage{amsfonts}
\usepackage[colorlinks,linkcolor=blue,citecolor=blue]{hyperref}
\usepackage{latexsym, amssymb, amsmath, amsthm, bbm}
\usepackage[all]{xy}
\usepackage{pgfplots}
\usepackage{mathtools}
\usepackage{mathrsfs}
\usepackage [latin1]{inputenc}
\usepackage[misc]{ifsym}
\usepackage{dsfont}
\usepackage{enumerate}
\usepackage{indentfirst}

\setcounter{MaxMatrixCols}{45}
\setlength{\hoffset}{-0.2in}%75
\setlength{\textwidth}{5.5in}

\DeclareSymbolFont{EulerExtension}{U}{euex}{m}{n}
\DeclareMathSymbol{\euintop}{\mathop} {EulerExtension}{"52}
\DeclareMathSymbol{\euointop}{\mathop} {EulerExtension}{"48}

\allowdisplaybreaks[4]

\addtolength{\parskip}{5pt}

\def \Ob{\operatorname{Ob}}
\def \id{\operatorname{Id}}
\def \ker{\operatorname{Ker}}

\def \im{\operatorname{im}}

\def \dim{\operatorname{dim}}

\def \Id{\operatorname{Id}}

\def \Rep{\operatorname{Rep}}

\def \id{\operatorname{Id}}
\def \ker{\operatorname{Ker}}

\def \im{\operatorname{im}}

\numberwithin{equation}{section}

\newtheorem{theorem}{Theorem}[section]
\newtheorem{main}[theorem]{Main Theorem}
\newtheorem{lemma}[theorem]{Lemma}
\newtheorem{proposition}[theorem]{Proposition}
\newtheorem{corollary}[theorem]{Corollary}
\newtheorem{definition}[theorem]{Definition}
\newtheorem{example}[theorem]{Example}
\newtheorem{remark}[theorem]{Remark}

\newtheorem{convention}[theorem]{Convention}
\newtheorem{conjecture}[theorem]{Conjecture}

\begin{document}

\title{Splitting Property of quasitriangular Hopf algebras}
\thanks{$^*$Corresponding author}
\author{Jinsong Wu} \address{ Beijing Institute of Mathematical Sciences and Applications, Beijing 101408, China. }\email{wjs@bimsa.cn}
\author{Kun Zhou*}\address{Beijing Institute of Mathematical Sciences and Applications, Beijing 101408, China. }\email{kzhou@bimsa.cn}

\subjclass[2020]{16T05, 18M15, 16T25}
\keywords{Quasitriangular Hopf algebras, Factorizable Hopf algebras, braided Cleft extension, Braided tensor categories, Simple Hopf algebras.}

\date{}
\maketitle
\begin{abstract}
We investigate the splitting property of quasitriangular Hopf algebras through the lens of twisted tensor products.
Specifically, we demonstrate that an infinite-dimensional quasitriangular Hopf algebra possesses the splitting property if it admits a factorizable quotient Hopf algebra.
We also establish that the splitting property holds if there exists a full rank quotient Hopf algebra where the left and right coinvariants coincide.
As a consequence, we obtain a new obstruction for quasitriangular structure in finite-dimensional Hopf algebras.
\end{abstract}

%\maketitle

\section{Introduction}

Quasitriangular Hopf algebras, introduced by Drinfeld \cite{VG}, provide solutions to the quantum Yang-Baxter equations, a cornerstone in the study of quantum integrable systems.
The small quantum group $u_q(\mathfrak{g})$, with appropriate parameter $q$, is an example of a quasitriangular Hopf algebra \cite{DA, KA1, Ro, LG-M, LG-Q, SSQ}.
Representations of finite-dimensional quasitriangular Hopf algebras give rise to braided finite tensor categories, which serve as a rich and significant source of such structures. Braided finite tensor categories play a central role in various fields of mathematics and theoretical physics due to their profound connections with quantum groups, knot theory, and topological quantum field theory \cite{Ro, Fre, Ker, TV1, NV}.

A natural question arises: can a quasitriangular Hopf algebra be expressed as a tensor product of two non-trivial quasitriangular Hopf algebras? The categorical analogue of this question asks whether a braided finite tensor category is braided equivalent to the Deligne tensor product of two braided finite tensor categories.
It is known that a semisimple braided finite tensor category is braided equivalent to the Deligne tensor product of a non-degenerate braided tensor subcategory and its relative M\"{u}ger centralizer (See \cite{EP} for details).
In 2019, K. Shimizu \cite{Shi} established dimension formulas for relative M\"{u}ger centralizers in the non-semisimple case.
Building on these results, R. Laugwitz and C. Walton \cite{LC} extended the conclusion to non-semisimple settings in 2022.
Moreover, two representation categories of quasitriangular Hopf algebras are braided equivalent if and only if one quasitriangular Hopf algebra is Hopf isomorphic to a twist of the other.
As a result, any finite-dimensional quasitriangular Hopf algebra with a factorizable quotient Hopf algebra can be expressed as a twisted tensor product of two quasitriangular Hopf algebras.
This leads us to modify the question: can a quasitriangular Hopf algebra be expressed as a twisted tensor product of two non-trivial quasitriangular Hopf algebras?
We say a quasitriangular Hopf algebra has splitting property if it is a twisted tensor product of two non-trivial quasitriangular Hopf algebras.

Their methods rely heavily on dimension formulas.
In contrast, we propose an alternative approach that allows us to eliminate the finiteness assumption.
In this paper, we establish a series of sufficient conditions for the splitting property of quasitriangular Hopf algebras.

\begin{main}[Theorems \ref{thm-m1}, \ref{thm-m2}]
Suppose that $(H, R)$ is a quasitriangular Hopf algebra ( which may be infinite dimensional) and $(K,(\pi\otimes \pi)(R))$ is a nontrivial quotient Hopf algebra of $(H, R)$.
Suppose that one of the following conditions holds
\begin{itemize}
\item $(K,(\pi\otimes \pi)(R))$ is factorizable.
\item $H^{co\pi}=\prescript{co\pi}{}{H}$ and $(K,(\pi\otimes \pi)(R))$ is full rank.
\end{itemize}
Then $(H, R)$ has splitting property.
\end{main}

When the quasitriangular Hopf algebra $H$ is an extension, we have the following results.

\begin{main}[Theorem \ref{pro-ext}]
Suppose that $(H, R)$ is a quasitriangular Hopf algebra (may infinite dimension) and one of the following conditions holds:
\begin{itemize}
\item $H$ is an extension of a Hopf algebra $A$ by $K$ and $K$ is factorizable.
\item $H$ is an extension of a Hopf algebra $A$ by $K$ and $K$ is full rank.
\end{itemize}
  Then $A$ admits a quasitriangular structure and $H$ is a twisted tensor product of $K$ and $A$.
\end{main}

In general, identifying obstructions to the existence of quasitriangular structures in finite-dimensional Hopf algebras is challenging. However, leveraging the splitting property, we derive a concrete obstruction for such structures.
\begin{main}[Theorem \ref{pro-qtt}]
Suppose $H$ is a finite dimensional Hopf algebra.
Suppose $|G(H^*)|\neq 1$, $G(H^*)\cap Z(H^*)=\{1\}$, $G(H)\cap Z(H)=\{1\}$, and there exists an odd prime factor $p$ of $|G(H^*)|$ such that for any $g\in G(H)$ with order $p$ and any $\alpha \in G(H^*)$ with order $p$, $\alpha(g)\neq 1$.
Then $H$ don't have any universal $R$-matrix.
\end{main}
Notably, this criterion depends solely on the group parts of Hopf algebras.
We anticipate that this approach will provide a convenient method for excluding certain Hopf algebras without quasitriangular structures.

The existing criteria for determining whether a Hopf algebra $H$ is quasitriangular are very few, such as checking if the representation ring of $H$, is commutative or if the antipode of $H$ is inner ($S^2(h)=uhgu^{-1}$ for some invertible element $u\in H$ or $S^4(h)=ghg^{-1}$ for some $g\in G(H)$).
However, these criteria are often not sufficient.
For example the Taft algebra $H_{p^2}$ (Example \ref{pro-exsit1}) with dimension $p^2$ for an odd prime $p$ satisfies both the commutativity of its representation ring (\cite{CHFO}) and the inner property of its antipode, but it admits no quasitriangular structure, which can be deduced by our obstruction.

The remarkable theorem of S. Natale (\cite[Theorem 1.2]{Nar}) plays a pivotal role in addressing a question posed by N. Andruskiewitsch \cite[Question 6.5]{AN-que} in the context of quasitriangular Hopf algebras.
The original proof of this theorem is intricate and profound.
Here, we aim to provide a shorter proof by leveraging our results.

This paper is organized as follows.
Section 2, we review the Hopf algebra in a braided monoidal category and the Cleft extension.
Section 3, we study the twisted tensor product and revisit the splitting property of finite dimensional quasitriangular Hopf algebras.
Section 4, we study the splitting property for (possibly infinite dimensional) quasitriangular Hopf algebras and obtain a series sufficient conditions for the splitting property.
Section 5, we study the splitting property for extensions of Hopf algebras.
Section 6, we obtain an obstruction of quasitriangular structure in finite dimensional Hopf algebras.
Moreover, we present a short proof of Natale's result in \cite{Nar}.

\begin{convention}\emph{Throughout this paper, we work over arbitrary field $\Bbbk$. For the symbol $\delta$, we mean the classical Kronecker's symbol. Our references for the theory of Hopf algebras are \cite{R,MS}, and for the theory of tensor categories, we refer to \cite{EP}. }
\end{convention}

\section{Preliminaries}

\subsection{Quasitriangular Hopf Algebras}

A \emph{quasitriangular Hopf algebra} is a pair $(H, R)$, where $H\equiv(H, m, \eta, \Delta, \epsilon, S)$ is a Hopf algebra over $\Bbbk$ and $\displaystyle R=\sum_i R_i \otimes R^i$ is the $R$-matrix.
Precisely, $R$ is an invertible element in $H\otimes H$ such that
\begin{equation*}
 (\Delta \otimes \id)(R)=R_{13}R_{23},\; (\id \otimes \Delta)(R)=R_{13}R_{12},\;,
 \end{equation*}
and $\Delta^{op}(h)=R\Delta(h)R^{-1}$ for all $h\in H$, where $\Delta^{op}=\tau\circ \Delta$, and $\tau$ is the flip map.
Here, $\displaystyle R_{12}= \sum_i R_i \otimes R^i\otimes 1 $, with similar definitions for $R_{13}$ and $R_{23}$.

Given a Hopf algebra $(H,m,\eta,\Delta,\epsilon,S)$, we write $H^{cop}$ (resp. $H^{op}$) as the Hopf algebra $(H,m,\eta,\tau\circ\Delta,\epsilon,S)$ (resp. $(H,m\circ \tau,\eta,\Delta,\epsilon,S)$). And we will use Sweedler's notation for its coproduct.

The finite dual $H^o$ of $H$ is defined by
\begin{align*}
H^o=\{f\in H^* |\;f(\mathcal{I}_f)=0 \text{ for some finite codimensional ideal } \mathcal{I}_f  \text{ in } H\}.
\end{align*}
%Note that the finite dual $H^o$ of $H$ is independent of the choice of the finite codimensional ideal $\mathcal{I}$.
The Hopf algebra structure on $H^o$ is induced by $H$.(see \cite{R} for example).

Suppose $(H_1,R_1),(H_2,R_2)$ are quasitriangular Hopf algebras.
We say $(H_1, R_1)$ and $(H_2, R_2)$ are isomorphic as quasitriangular Hopf algebras if there exists a Hopf isomorphism $\Psi:H_1\rightarrow H_2$ that preserves their quasitriangular structures, meaning $(\Psi\otimes \Psi)(R_1)=R_2$.

A quasitriangular Hopf algebra $(H,R)$ is called \emph{a ribbon Hopf algebra} if there exists a ribbon element in $H$, i.e. a central element $\theta$ in $H$ satisfying the relations: $\epsilon(\theta)=1$, $S(\theta)=\theta$ and
$$ \Delta(\theta)=(R_{21}\cdot R)^{-1}(\theta\otimes \theta),$$
where
\begin{align*}
    R_{21}\cdot R=(m\otimes m) (\id\otimes \tau\otimes \id)((\tau R)\otimes R).
\end{align*}
Two ribbon Hopf algebras are said to be isomorphic as ribbon Hopf algebras if there exists a Hopf isomorphism between them that preserves both their quasitriangular structures and their ribbon elements.

We define the linear maps $\Phi_R: H^{o}\rightarrow H$ and $\Phi_{\tau R}: H^{o}\rightarrow H$ as follows:
\begin{equation}\label{eq2.5}
\begin{aligned}
\Phi_R(f):&  =(f\otimes \Id)(R_{21}\cdot R), \;f\in H^o.\\
\Phi_{\tau R}(f): & =(\Id\otimes f)(R_{21}\cdot R),\;f\in H^o.
\end{aligned}
\end{equation}
A \emph{factorizable Hopf algebra} is a finite dimensional quasitriangular Hopf algebra $(H, R)$ for which $\Phi_R$ or equivalently $\Phi_{\tau R}$, is a linear isomorphism.
Suppose $C\subseteq H^o$ be a subcoalgebra.
Then $\Phi_R(C)\subseteq H$ is a normal left coideal of $H$ (See \cite[Lemma 4.1]{Nar}.)

\begin{remark}\label{rk-nomco}
It's known that $\Phi_R(H^o)$ is a normal left coideal subalgebra of $H$ (see \cite[Chapter 9]{MSaF}).
%This can be seen from this lemma and the following Remark \ref{rk-transmu}.
More generally, if $B\subseteq H^o$ is a Hopf subalgebra, then $\Phi_R(B)$ is a normal left coideal subalgebra of $H$ (see \cite[Section 3]{Nar}).
%by following Remark \ref{rk-transmu}.
\end{remark}

Suppose $(K, \pi)$ is a quotient Hopf algebra of $(H, R)$, where $\pi:H\to K$ is a surjective Hopf map.
Let
\begin{align*}
    H^{co\pi}=& \{h\in H|\;(\id\otimes \pi)\circ \Delta=h\otimes 1\},\\
    \prescript{co\pi}{}{H}=&\{h\in H|\;(\pi\otimes \id)\circ \Delta=1\otimes h\}.
\end{align*}
$H^{co\pi}$ (resp.  $\prescript{co\pi}{}{H}$) is normal left (resp. right) coideal subalgebra of $H$.

%Recall that the finite dual $H^o$ of $H$ is defined by
%\begin{align*}
%H^o=\{f\in H^* |\;f(\mathcal{I}_f)=0 \text{ for some finite codimensional ideal } \mathcal{I}_f  \text{ in } H\}.
%\end{align*}
%%Note that the finite dual $H^o$ of $H$ is independent of the choice of the finite codimensional ideal $\mathcal{I}$.
%The Hopf algebra structures on $H^o$ is induced by $H$.(see \cite{R} for example).

For the quasitriangular Hopf algebra $(H,R)$, there are Hopf maps $l_R: (H^o)^{cop}\rightarrow H$ and $r_R: (H^o)^{op}\rightarrow H$, given respectively by
$$l_R(f):=(f\otimes \Id)(R),\;\;r_R(f):=(\Id \otimes f)(R),\;f\in H^o.$$
If the above map $l_R$ is surjective, we say $(H,R)$ is \emph{full rank}.
%For convenience, we will use these maps freely without further explanation.

\subsection{Hopf Algebras in Braided Monoidal Category}

We shall briefly recall the Hopf algebras in a braided monoidal category in this section.
Suppose $(\mathcal{D},\otimes,\mathds{1},c)$ is a strict braided monoidal category, where $c$ is a braiding structure on $(\mathcal{D},\otimes,\mathds{1})$.

For an algebra $(A,m_A,\eta_A)$ and a coalgebra $(B,\Delta_B,\epsilon_B)$ in $\mathcal{D}$, we denote by $\text{Reg}(B,A)$ the group of invertible elements in $\mathcal{D}(B,A)$ (the set of morphisms in $\mathcal{D}$ from $B$ to $A$), where the convolution product given by
\begin{align*}
f\ast g = m_A\otimes (f\otimes g)\circ \Delta_B, \quad f, g\in \mathcal{D}(B, A).
\end{align*}
We see that the unit element of $\text{Reg}(B,A)$ is $\eta_A \otimes \epsilon_B$.
Now we recall the Hopf algebras in braided monoidal category (see \cite{AAj, MSa, TMs, TMf} for details).

\begin{definition}[Hopf Algebra in Category]
A Hopf algebra in $(\mathcal{D},\otimes,\mathds{1},c)$ is a six-tuple $(H,m,\eta,\Delta,\epsilon,S)$ such that
\begin{itemize}
  \item[(i)] $(H,m,\eta)$ is an algebra in $\mathcal{D}$ and $(H,\Delta,\epsilon)$ is a coalgebra in $\mathcal{D}$,
  \item[(ii)]  $\Delta,\epsilon\;$ are algebra maps in $\mathcal{D}$,
  \item[(iii)] $S$ is convolution inverse to the identity map in \emph{Reg}$(H,H)$.
\end{itemize}
\end{definition}

We recall the Hopf algebra in the representation category of a quasitriangular Hopf algebras, i.e. Majid's transmutation theory \cite[Chapter 9.4]{MSaF}.
Let $(H, R)$ be a quasitriangular Hopf algebra.
Majid shows that there is a Hopf algebra $\underline{H}$ in the braided monoidal category $\Rep(H)$ corresponding to $(H,R)$, here $\Rep(H)$ means that the category of representation of $H$ (contain infinite dimensional representation) where the braiding structure is $\tau \circ R$, which we shall describe in the following.
The algebra structure of the Hopf algebra $\underline{H}$ is the same as $H$.
We shall identify the elements in $\underline{H}$ with the elements in $H$.
We shall describe the coalgebra structure of $\underline{H}$.
The action of $H$ on $\underline{H}$ is defined by
\begin{align*}
\text{ad}_h(a):= h_{(1)}aS(h_{(2)}), \quad h\in H, \ a\in \underline{H},
\end{align*}
the comultiplication $\underline{\Delta}$ in $\Rep(H)$ is given by:
\begin{equation}
\underline{\Delta}(a)= \sum a_{(1)} \otimes a_{(2)}:=\sum_{i}a_{(1)}S(R^i)\otimes \text{ad}_{R_i}( a_{(2)}), \label{copro}
\end{equation}
and the antipode $\underline{S}$ is given by
\begin{equation}
\underline{S}(a):= \sum_{i}R^iS(\text{ad}_{R_i}(a)). \label{anti}
\end{equation}

There is a Hopf algebra structure on the dual space $\underline{H^o}$ in the braided tensor category $\Rep(H)$.
We describe as follows.
The coalgebra structure of the Hopf algebra $\underline{H^o}$ is the same as $H^o$.
The action of $H$ on $\underline{H^o}$ defined by
\begin{align*}
\langle h \triangleright f,a\rangle=\langle f,S(h_{(1)})ah_{(2)}\rangle, \quad h\in H, \ a\in \underline{H} ,\  f\in \underline{H^o}
\end{align*}
with multiplication $\underline{m}$ is given by:
\begin{equation}
f\text{\underline{.}}g:=\langle S(f_{(1)})f_{(3)},S(g_{(1)})\rangle_R\  f_{(2)}g_{(2)} , \label{pro}
\end{equation}
the antipode $\underline{S}$ is given by
\begin{equation}
\underline{S}(f):=\langle S^2(f_{(3)})S(f_{(1)}), f_{(4)}\rangle_R \ S(f_{(2)}), \label{anti-dual}
\end{equation}
where $\langle f,g\rangle_R:=(f\otimes g)(R)$ for all $f,g\in H^o$ and
\begin{align*}
    \Delta^{(3)} (f)=f_{(1)}\otimes f_{(2)} \otimes f_{(3)} \otimes f_{(4)}.
\end{align*}

The map $\Phi_R: H^o\to H$ induces a map $\underline{\Phi_R}:\underline{H^o} \rightarrow \underline{H}$ by $\underline{\Phi_R}(f)=\Phi_R(f)$, $f\in \underline{H^o}$.
Majid's Transmution theory \cite[Chapter 9]{MSaF} tells us that $\underline{\Phi_R}$ is a Hopf map in $\Rep(H)$.
%From this, we can see that $\Phi_R(H^o)$ is a normal left coideal subalgebra of $H$. More generally, if $B\subseteq H^o$ is a Hopf subalgebra, then $\Phi_R(B)$ is a normal left coideal subalgebra of $H$ (see \cite[Section 3]{Nar}).
%This can be seen from this lemma and the following Remark \ref{rk-transmu}.
%by following Remark \ref{rk-transmu}.

Suppose $(K, \pi)$ is a quotient Hopf algebra of $(H, R)$.
We denote $\overline{R}=(\pi\otimes \pi)(R)$.
The algebra structure of the Hopf algebra $\underline{K}$ is the same as $K$ and the co-algebra structure is described in the following.
The comultiplication $\underline{\Delta}$ is given by
\begin{equation}\label{eq-cok}
\underline{\Delta}(k)= \sum k_{(1)} \otimes k_{(2)}:=\sum_{i}k_{(1)}\pi(S(R^i))\otimes \text{ad}_{\pi(R_i)}( k_{(2)}), \quad k \in \underline{K},
\end{equation}
the antipode $\underline{S}$ is given by
\begin{equation}
\underline{S}(k):= \sum_{i}\pi(R^i)S(\text{ad}_{\pi(R_i)}(k)), \quad k\in \underline{K}.
\end{equation}
The action of $H$ on $\underline{K}$ is given by
\begin{align*}
h.k:=\text{ad}_{\pi(h)}(k), \quad h\in H,\ k\in \underline{K}.
\end{align*}
Now we see that $\underline{K}$ is a Hopf algebra in $\Rep(H)$.

Similarly, the finite dual $\underline{K^o}$ is a Hopf algebra in $\Rep(H)$.
The coalgebra structure of the Hopf algebra $\underline{K^o}$ is the same as $K^o$.
The action of $H$ on $\underline{K^o}$ is given by
\begin{align*}
\langle h \triangleright f,k\rangle=\langle f,S(\pi(h_{(1)}))k\pi(h_{(2)})\rangle, \quad h\in H, k\in \underline{K}, f\in \underline{K^o}.
\end{align*}

Moreover, the multiplication and the antipode of $\underline{K^*}$ in $\Rep(H)$ are given by:
\begin{equation}
f\text{\underline{.}}g:=\langle S(f_{(1)})f_{(3)},S(g_{(1)})\rangle_{\overline{R}} (f_{(2)}g_{(2)}),
\end{equation}
\begin{equation}
\underline{S}(f):=\langle S^2(f_{(3)})S(f_{(1)}), f_{(4)}\rangle_{\overline{R}} S(f_{(2)}),
\end{equation}
where $\langle f,g\rangle_{\overline{R}}:=(f\otimes g)(\overline{R})$ for all $f,g\in K^*$.

The linear map $\underline{\pi}: \underline{H} \to \underline{K}$ is defined as $\underline{\pi}(a)=\pi(a)$, $a\in \underline{H}$.
We shall show that $\underline{\pi}$ is a Hopf map preserving the coalgebra structures of $\underline{H}$ and $\underline{K}$.

Suppose $B$ is a Hopf algebra in a braided monoidal category $\mathcal{D}$.
Recall that an algebra $(A,m_A,\eta_A)$ is a right $B$-comodule algebra if:
\begin{itemize}
  \item[(i)] There is a morphism $\rho_A\in \mathcal{D}(A, A\otimes B)$ such that
      $$(\rho_A \otimes \Id_B)\circ \rho_A=(\Id_A \otimes \Delta_B)\circ \rho_A,\;(\Id_A \otimes \epsilon_B)\circ \rho_A=\Id_A,$$
    i.e.  $(A,\rho_A)$ is a right $B$-comodule.

  \item[(ii)] $\eta_A$ and $m_A$ are morphisms of right $B$-comodules with respect to $\rho_A$, i.e. the following equations hold
      \begin{align*}
      \rho_A \circ \eta_A=&\eta_A\otimes \eta_B, \\
      \rho_A\circ m_A=&(m_A\otimes \Id_B)\circ \rho_{A\otimes A},
      \end{align*}
\end{itemize}
where $\rho_{A\otimes A}:=(\Id_{A\otimes A}\otimes m_B)(\Id_A\otimes c_{B,A}\otimes \Id_A)\circ(\rho_A\otimes \rho_A)$.

\begin{proposition}\label{prop-hopf}
Suppose that $(H, R)$ is a quasitriangular Hopf algebra and $(K,\pi)$ a quotient Hopf algebra of $(H, R)$.
We have that $\underline{\pi}$ is a Hopf map in $\Rep(H)$ and  $(\underline{H}, (\id\otimes \underline{\pi})\circ \underline{\Delta})$ is a right $\underline{K}$-comodule algebra in $\Rep(H)$.
\end{proposition}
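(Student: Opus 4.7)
The plan is to verify the two assertions separately, reducing each to routine computations using the explicit formulas \eqref{copro}, \eqref{anti}, \eqref{eq-cok} and the fact that $\pi\colon H\to K$ is already a Hopf map of ordinary Hopf algebras. Throughout I will identify $\underline{\pi}$ with $\pi$ on underlying vector spaces, and similarly $\underline{H}$ with $H$, $\underline{K}$ with $K$ on underlying algebras.

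First I will show $\underline{\pi}$ is a morphism in $\Rep(H)$. The $H$-action on $\underline{H}$ is $\mathrm{ad}_h(a)=h_{(1)}aS(h_{(2)})$, and the $H$-action on $\underline{K}$ is $h.k=\mathrm{ad}_{\pi(h)}(k)$. Since $\pi$ is an algebra map intertwining antipodes, $\pi(\mathrm{ad}_h(a))=\mathrm{ad}_{\pi(h)}(\pi(a))=h.\pi(a)$, so $\underline{\pi}$ is $H$-linear. Multiplicativity and preservation of the unit are immediate, since $\underline{H},\underline{K}$ share the algebra structures of $H,K$. For coalgebra compatibility, I would apply $\underline{\pi}\otimes\underline{\pi}$ to \eqref{copro}:
\begin{align*}
(\underline{\pi}\otimes\underline{\pi})\underline{\Delta}(a)
&=\sum_i \pi(a_{(1)}S(R^i))\otimes \pi(\mathrm{ad}_{R_i}(a_{(2)}))\\
&=\sum_i \pi(a)_{(1)}\pi(S(R^i))\otimes \mathrm{ad}_{\pi(R_i)}(\pi(a)_{(2)}),
\end{align*}
which matches \eqref{eq-cok} applied to $\pi(a)$. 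Preservation of $\underline{\epsilon}$ is trivial, and preservation of $\underline{S}$ follows analogously from \eqref{anti}, again using that $\pi$ is an algebra and antipode preserving map. This establishes that $\underline{\pi}$ is a bialgebra, hence Hopf, morphism in $\Rep(H)$.

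For the second assertion, set $\rho:=(\id\otimes\underline{\pi})\circ\underline{\Delta}\colon \underline{H}\to\underline{H}\otimes\underline{K}$. The comodule axioms follow formally: coassociativity of $\underline{\Delta}$ together with $\underline{\pi}$ being a coalgebra map in $\Rep(H)$ gives
\begin{align*}
(\rho\otimes\id)\rho
&=(\id\otimes\underline{\pi}\otimes\underline{\pi})(\underline{\Delta}\otimes\id)\underline{\Delta}
=(\id\otimes\underline{\pi}\otimes\underline{\pi})(\id\otimes\underline{\Delta})\underline{\Delta}\\
&=(\id\otimes\underline{\Delta}_{\underline{K}})(\id\otimes\underline{\pi})\underline{\Delta}
=(\id\otimes\underline{\Delta}_{\underline{K}})\rho,
\end{align*}
and the counit axiom is similar. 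This argument is general, valid for any Hopf map in any braided category.

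The main content therefore lies in verifying that $\rho$ is an algebra morphism with respect to the braided tensor product algebra structure on $\underline{H}\otimes\underline{K}$, which uses the braiding $c_{\underline{K},\underline{H}}=\tau\circ R$. Here the plan is to invoke the two facts that $\underline{\Delta}\colon\underline{H}\to\underline{H}\otimes\underline{H}$ is an algebra morphism in $\Rep(H)$ (a defining property of the Hopf algebra structure of $\underline{H}$) and that $\underline{\pi}$ is an algebra morphism in $\Rep(H)$ established above. Naturality of the braiding with respect to $\underline{\pi}$ then implies $(\id\otimes\underline{\pi})$ is an algebra morphism from $\underline{H}\otimes\underline{H}$ to $\underline{H}\otimes\underline{K}$ with the braided tensor product algebra structures, and composing with $\underline{\Delta}$ yields the required multiplicativity of $\rho$; unitality is trivial. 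The step I expect to be most delicate is precisely this braided multiplicativity, because one must keep the braiding $\tau\circ R$ straight in the tensor product algebra and confirm that the naturality square for $c$ along $\underline{\pi}$ commutes, which is exactly the $H$-equivariance of $\underline{\pi}$ combined with naturality of $c$ in $\Rep(H)$.
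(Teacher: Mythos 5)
Your proposal is correct and takes essentially the same route as the paper's proof: the morphism and algebra-map properties of $\underline{\pi}$ are read off from the shared structures, the coalgebra compatibility is the same direct computation applying $\underline{\pi}\otimes\underline{\pi}$ to \eqref{copro} and using that $\pi$ is an ordinary Hopf map, and the comodule-algebra part is reduced, exactly as in the paper, to multiplicativity of $(\id\otimes\underline{\pi})\circ\underline{\Delta}$, obtained from naturality of the braiding along $\underline{\pi}$ together with $\underline{\Delta}$ being an algebra map in $\Rep(H)$. The only differences are presentational: you compress the coalgebra computation that the paper writes out line by line, and you phrase the multiplicativity step as a composition of algebra maps rather than as the paper's explicit identity \eqref{equ-com}.
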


\begin{proof}
Note that $\underline{H}, \underline{K}\in \Rep(H)$ and $\pi: H\to K$ is a Hopf map.
We see that $\underline{\pi}$ is a morphism in $\Rep(H)$ and $\underline{\pi}$ is an algebra map in $\Rep(H)$.
% Moreover, due to the definitions of algebra structure of $H$ and the algebra structure of $K$, we obtain that $\underline{\pi}$ is an algebra map in $\Rep(H)$.
Now we will show that $\underline{\pi}$ is a coalgebra map in $\Rep(H)$.
For any $h\in \underline{H}$, we have that
\begin{align*}
(\underline{\pi}\otimes \underline{\pi})\circ \underline{\Delta}(h)&= (\underline{\pi}\otimes \underline{\pi})(\sum_{i}h_{(1)}S(R^i)\otimes \text{ad}_{R_i}( h_{(2)}))\\
&=\sum_{i}\pi(h_{(1)})S(\pi(R^i))\otimes \pi[(R_i)_{(1)})h_{(2)}S((R_i)_{(2)})]\\
&=\sum_{i}\pi(h_{(1)})S(\pi(R^i))\otimes \pi((R_i)_{(1)})\pi(h_{(2)})\pi(S((R_i)_{(2)}))\\
&=\sum_{i}\pi(h_{(1)})S(\pi(R^i))\otimes \pi(R_i)_{(1)})\pi(h_{(2)})S(\pi(R_i)_{(2)})\\
&=\sum_{i}\pi(h)_{(1)} S(\pi(R^i))\otimes \pi(R_i)_{(1)})\pi(h)_{(2)} S(\pi(R_i)_{(2)})\\
&=\sum_{i}\pi(h)_{(1)} S(\pi(R^i))\otimes \text{ad}_{\pi(R_i)}(\pi(h)_{(2)})\\
&=\underline{\Delta}\circ \underline{\pi} (h).
\end{align*}
This implies that $\underline{\pi}$ is a coalgebra map in $\Rep(H)$.
To show that $(\underline{H},(\id\otimes \underline{\pi})\circ \underline{\Delta})$ is a right $\underline{K}$-comodule algebra in $\Rep(H)$, the only non-trivial case is to prove that $m_{\underline{H}}$ is a comodule map with respect to the comodule structure $(\id\otimes \underline{\pi})\circ \underline{\Delta}$, i.e. the following equality holds:
\begin{equation}\label{equ-com}
\begin{aligned}
 & (\id_{\underline{H}}\otimes \underline{\pi})\circ \Delta_{\underline{H}} \circ m_{\underline{H}}\\
 & = (m_{\underline{H}} \otimes m_{\underline{K}} ) \circ (\id_{\underline{H}} \otimes c_{\underline{K},\underline{H}} \otimes
\id_{\underline{H}})  \circ (\id_{\underline{H}} \otimes \underline{\pi} \otimes
\id_{\underline{H}}\otimes \underline{\pi})\circ (  \Delta_{\underline{H}} \otimes
 \Delta_{\underline{H}}).
 \end{aligned}
\end{equation}
Using the naturality of the braiding stucture, we get $c_{\underline{K}\otimes \underline{H}}\circ (\pi\otimes \id_{\underline{H}})= (\id_{\underline{H}}\otimes \pi )\circ c_{\underline{H}\otimes \underline{H}}$.
Combining this with the fact that $\Delta_{\underline{H}}$ is algebra map in $\Rep(H)$, we obtain that the above equality \eqref{equ-com} holds.
\end{proof}
Let $\underline{H^{co\pi}}=\{h\in \underline{H}|\;(\id\otimes \underline{\pi})\circ \underline{\Delta}(h)=h\otimes 1\}$.
\begin{lemma}\label{lem-copp}
We have $\underline{H^{co\pi}}=H^{co\pi}$ as vector spaces.
\end{lemma}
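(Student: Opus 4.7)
The plan is to exhibit an invertible linear automorphism $T'$ of $H\otimes K$ such that $\underline{\Delta}_\pi = T'\circ \Delta_\pi$ and $T'(h\otimes 1)=h\otimes 1$ for every $h\in H$; this reduces both inclusions in $\underline{H^{co\pi}}=H^{co\pi}$ to a single statement about the bijectivity of $T'$.

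Concretely, set $\Delta_\pi:=(\id\otimes \pi)\circ \Delta$ and $\underline{\Delta}_\pi:=(\id\otimes \underline{\pi})\circ \underline{\Delta}$, and define
\begin{equation*}
T'(x\otimes k):=\sum_{i}xS(R^i)\otimes \text{ad}_{\pi(R_i)}(k).
\end{equation*}
Applying $\id\otimes \pi$ directly to formula \eqref{copro} for $\underline{\Delta}$ gives $\underline{\Delta}_\pi=T'\circ \Delta_\pi$. Using $\text{ad}_{\pi(R_i)}(1)=\epsilon(R_i)\cdot 1$ together with the standard identity $(\epsilon\otimes \id)(R)=1$, one checks $T'(h\otimes 1)=h\otimes 1$. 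Consequently, $\Delta_\pi(h)=h\otimes 1$ implies $\underline{\Delta}_\pi(h)=T'(h\otimes 1)=h\otimes 1$, proving $H^{co\pi}\subseteq \underline{H^{co\pi}}$; conversely, $\underline{\Delta}_\pi(h)=h\otimes 1$ forces $T'(\Delta_\pi(h))=T'(h\otimes 1)$, and injectivity of $T'$ then yields $\Delta_\pi(h)=h\otimes 1$.

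To prove $T'$ is bijective, I will use $(\Delta\otimes \id)(R)=R_{13}R_{23}$ to split the coproduct hidden inside $\text{ad}_{\pi(R_i)}$ and factor $T'=T_2\circ T_1$, where
\begin{equation*}
T_1(x\otimes k)=\sum_{i}xS(R^i)\otimes k\,\pi(S(R_i)),\qquad T_2(x\otimes k)=\sum_{i}xS(R^i)\otimes \pi(R_i)\,k.
\end{equation*}
Using $(S\otimes S)(R)=R$, the element $\sum_{i}S(R^i)\otimes \pi(S(R_i))$ equals $(\id\otimes \pi)(R_{21})$, so $T_1$ is right multiplication in $H\otimes K$ by an invertible element and is therefore invertible. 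For $T_2$, I regard $H\otimes K$ as a free rank-one left module over the algebra $H^{op}\otimes K$ via $(h\otimes k_0)\cdot (x\otimes k):=xh\otimes k_0 k$; under this structure $T_2$ is left multiplication by $P:=(S\otimes \pi)(R_{21})$. Since $S\otimes \pi\colon H\otimes H\to H^{op}\otimes K$ is an algebra homomorphism (as $S\colon H\to H^{op}$ and $\pi\colon H\to K$ are both algebra maps) and $R_{21}$ is invertible in $H\otimes H$, the element $P$ is invertible in $H^{op}\otimes K$ with inverse $(S\otimes \pi)(R_{21}^{-1})$, so $T_2$ is invertible.

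The main technical hurdle is the invertibility of $T_2$: one must be careful to pass to the \emph{opposite} multiplication in the $H$-factor in order to realise the two-sided sandwich $(x\otimes 1)P(1\otimes k)$ as genuine left multiplication by $P$. Once this shift is made, invertibility of the $R$-matrix in $H\otimes H$ transports to invertibility of $P$ in $H^{op}\otimes K$, and the rest of the argument is only bookkeeping with the standard $R$-matrix identities.
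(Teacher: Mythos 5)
Your proof is correct, and it shares the paper's basic strategy: both inclusions follow once one knows that the braided coinvariance equation differs from the ordinary one by an invertible operator fixing $h\otimes 1$. Where you genuinely diverge is in how invertibility is established. The paper never isolates your operator $T'$; it simply applies to the equation $(\id\otimes \underline{\pi})\circ \underline{\Delta}(h)=h\otimes 1$ the operator of the same shape built from the inverse $R$-matrix, namely $x\otimes k\mapsto \sum_j xS(R^j)\otimes \text{ad}_{\pi(S(R_j))}(k)$, and cancels everything using $R^{-1}=(S\otimes \Id)(R)$ and $R^{-1}R=1\otimes 1$. In operator language: the assignment $W(a\otimes b)(x\otimes k)=xS(b)\otimes \text{ad}_{\pi(a)}(k)$ is an algebra map $H\otimes H\to \operatorname{End}_{\Bbbk}(H\otimes K)$ (right multiplication by $S(-)$ and $\text{ad}_{\pi(-)}$ are each multiplicative and act on different tensor legs), so $T'=W(R)$ is invertible with inverse $W(R^{-1})$, which is exactly the paper's untwisting map. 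You instead split $T'=T_2\circ T_1$ via the quasitriangularity axiom $(\Delta\otimes \id)(R)=R_{13}R_{23}$, recognize $T_1$ as right multiplication by $(\id\otimes \pi)(R_{21})$ using $(S\otimes S)(R)=R$, and handle the two-sided sandwich $T_2$ by viewing $H\otimes K$ as a free rank-one module over $H^{op}\otimes K$. Every one of these steps checks out (and, like the paper's argument, uses nothing beyond invertibility of $R$ and standard quasitriangular identities, so it is valid in infinite dimension), but the machinery is heavier than needed: your own $H^{op}$-trick, applied to $T'$ itself through the algebra map $W$ above rather than only to the factor $T_2$, collapses the entire bijectivity argument to one line and recovers the paper's proof, with no need for the factorization, the hexagon identity, or $(S\otimes S)(R)=R$. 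What your version buys in exchange is structural clarity: bijectivity of a single named operator $T'$ that intertwines $(\id\otimes\pi)\circ\Delta$ with $(\id\otimes \underline{\pi})\circ\underline{\Delta}$ disposes of both inclusions simultaneously, whereas the paper proves one inclusion and settles the other with ``similarly.''
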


\begin{proof}
Let $h\in \underline{H^{co\pi}}$.
We have $\displaystyle \sum_i h_{(1)}S(R^i)\otimes \underline{\pi}(\text{ad}_{R_i}(h_{(2)}))=h\otimes 1$. Thus, we obtain
$$\sum_{i,j} h_{(1)}S(R^i)S(R^j)\otimes \text{ad}_{\pi\circ S(R_j)}[\underline{\pi}(\text{ad}_{R_i}(h_{(2)}))]=h\otimes 1,$$
Combing this equation with $R^{-1}=(S\otimes \Id)(R)$, we know $(\Id\otimes \pi)\circ \Delta (h)=h\otimes 1$, i.e. $h\in H^{co\pi}$. Thus, $\underline{H^{co\pi}}\subseteq H^{co\pi}$. Similarly, we get $H^{co\pi}\subseteq \underline{H^{co\pi}}$. Hence, we complete the proof.
\end{proof}

% \begin{remark}\label{rk-transmu}
% \emph{
% In above context, the map $f_{R_{21}R}: \underline{H^*} \rightarrow \underline{H}$ given by \eqref{eq2.5} becomes a morphism of Hopf algebras in $\Rep(H)$, i.e. $f_{R_{21}R}$ is a left $H$ module map that transforms the product \eqref{pro} into the product of $H$ and the coproduct of $H^*$ into the coproduct \eqref{copro}. Let $B\subseteq H^*$ be a usual Hopf subalgebra. Using the definition of product for $\underline{H^*}$, we know that $B$ is closed both for the product of $\underline{H^*}$ and the unit. Using this fact, we can conclude that $f_{R_{21}R}(B)$ is also a usual subalgebra of $H$. Combining this with Lemma \ref{lem-co}, we see that $f_{R_{21}R}(B)$ is normal left coideal subalgebra.} %See \cite[Propositions 2.1.14 and 7.4.3]{MSh}.
% \end{remark}

\subsection{Cleft Extension}
We recall cleft extensions in a braided monoidal category $\mathcal{D}$ (see \cite{AAj} for details) in this section.
Suppose $H$ is a Hopf algebra in $\mathcal{D}$.

Let $A$ be an algebra in $\mathcal{D}$ and $\mu_A\in \mathcal{D}(H\otimes A,A)$.
The morphism $\mu_A$ is an action of $H$ on $A$ if it satisfies:
\begin{itemize}
  \item[(i)] $\mu_A\circ (\eta_H\otimes \Id_A)=\Id_A$;
  \item[(ii)] $\mu_A\circ (\Id_H\otimes \eta_A)=\epsilon_H\otimes \eta_A$ and $\mu_A\circ (\Id_H\otimes m_A)=m_A\circ \mu_{A\otimes A}$, where
  \begin{align*}
  \mu_{A\otimes A}=(\mu_A\otimes \mu_A)\circ (\Id_H\otimes c_{H,A}\otimes \Id_A)\circ (\Delta_H\otimes \Id_A\otimes \Id_A).
  \end{align*}
\end{itemize}

For an action $\mu_A$ and a morphism $\sigma\in \text{Reg}(H\otimes H, A)$, we formally define two morphisms $\eta_{A\#_{\sigma}H}$ and $m_{A\#_{\sigma}H}$ as follows
\begin{align*}
\eta_{A\#_{\sigma}H}:=\eta_A\otimes \eta_H,
\end{align*}
and
\begin{align*}
m_{A\#_{\sigma}H}:=& (m_A\otimes \Id_H)
\circ (m_A\otimes \sigma \otimes m_H)
\circ (\Id_A\otimes m_A \otimes \Delta_{H\otimes H})\\
& \circ (\Id_A\otimes \Id_H\otimes c_{H,A}\otimes \Id_H)
\circ (\Id_A\otimes \Delta_H\otimes \Id_A\otimes \Id_H).
\end{align*}
We say $(A\otimes H,m_{A\#_{\sigma}H},\eta_{A\#_{\sigma}H})$ is a \emph{crossed product} of $A$ and $H$ in $\mathcal{D}$, if it is an algebra in $\mathcal{D}$.

Now we recall $H$-cleft comodule algebra in a braided monoidal category $\mathcal{D}$.
\begin{definition}\cite[Definition 1.1]{AAj}
A right $H$-comodule algebra $(A, \rho_A)$ is called an $H$-cleft
comodule algebra if there exists an $H$-comodule morphism $\gamma$ in
\emph{Reg}$(H,A)$.
\end{definition}

Suppose $B=A\#_{\sigma}H$ a crossed product in $\mathcal{D}$.
Then $B$ is actually a $H$-cleft comodule algebra, where its comodule structure and the homomorphism $\gamma\in \text{Reg}(H,B)$ can be defined as follows:
\begin{align}\label{eq-cleft}
\rho_{B}:=\Id_A\otimes \Delta_H,\;\gamma:=\eta_A\otimes \Id_H.
\end{align}
Moreover, all $H$-cleft comodule algebras in a braided monoidal category with
equalizers and coequalizer is crossed products of an algebra $A$ and $H$.
\begin{theorem}\cite[Theorem 1.3]{AAj}\label{thm1.1}
Suppose $\mathcal{D}$ is a braided monoidal category with equalizers and coequalizer. Let $(A, \rho_A)$ be a right $H$-comodule algebra in $\mathcal{D}$.
Then the following statements are equivalent:
\begin{itemize}
  \item[(i)] $A$ is an $H$-cleft comodule algebra in $\mathcal{D}$;
  \item[(ii)] $A_0\#_{\sigma}H$ is a crossed product such that $A\cong A_0\#_{\sigma}H$ as $H$-comodule algebras in $\mathcal{D}$;
\end{itemize}
\end{theorem}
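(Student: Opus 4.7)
The plan is to prove both implications by explicit construction. The direction (ii) $\Rightarrow$ (i) is almost immediate from the definitions, so most of the work goes into (i) $\Rightarrow$ (ii).

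For (ii) $\Rightarrow$ (i), suppose $A = A_0 \#_\sigma H$. I would take the cleaving map to be $\gamma := \eta_{A_0} \otimes \Id_H \colon H \to A$ as already recorded in \eqref{eq-cleft}. It is an $H$-comodule morphism by construction, since the coaction on the crossed product is $\Id_{A_0} \otimes \Delta_H$. The convolution inverse $\gamma^{-1}$ is built from the antipode $S$ of $H$ and the convolution inverse $\sigma^{-1}$ of the cocycle; in element language it reads $\gamma^{-1}(h) = \sigma^{-1}(S(h_{(2)}) \otimes h_{(3)}) \cdot \gamma(S(h_{(1)}))$, and this translates to a well-defined morphism in $\mathcal{D}$ by inserting the braiding $c$ in the usual way.

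For (i) $\Rightarrow$ (ii), I would first construct the subalgebra of coinvariants $A_0$ as the equalizer in $\mathcal{D}$ of $\rho_A$ and $\Id_A \otimes \eta_H$, which exists by hypothesis; since $\rho_A$ is an algebra morphism, $A_0$ inherits an algebra structure together with a canonical monic $\iota \colon A_0 \hookrightarrow A$. Given the cleaving $\gamma$ with convolution inverse $\gamma^{-1}$, I would then define an action $\mu \colon H \otimes A_0 \to A_0$ and a cocycle $\sigma \colon H \otimes H \to A_0$ as the categorical analogues of
\begin{align*}
\mu(h \otimes a) &= \gamma(h_{(1)}) \cdot a \cdot \gamma^{-1}(h_{(2)}), \\
\sigma(h \otimes k) &= \gamma(h_{(1)}) \gamma(k_{(1)}) \gamma^{-1}(h_{(2)} k_{(2)}).
\end{align*}
The $H$-colinearity of $\gamma$ ensures both maps factor through $A_0$, and the twisted module-algebra condition together with the $2$-cocycle identity then follow from associativity of $A$ and coassociativity of $H$ after a bookkeeping argument with $\gamma$ and $\gamma^{-1}$.

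Finally I would define $\Phi \colon A_0 \#_\sigma H \to A$ by $\Phi = m_A \circ (\iota \otimes \gamma)$, together with a candidate inverse $\Psi \colon A \to A_0 \otimes H$ built from the categorical analogue of $a \mapsto a_{(0)} \gamma^{-1}(a_{(1)}) \otimes a_{(2)}$; colinearity of $\gamma$ guarantees that the first tensor factor lands in $A_0$, and it is here that the coequalizer hypothesis enters, to realise $\Psi$ as a morphism into $A_0 \otimes H$ rather than merely $A \otimes H$. Checking that $\Phi$ and $\Psi$ are mutually inverse $H$-comodule algebra maps is then a diagram chase. The main obstacle is that every element-level formula above must be upgraded to a genuine morphism identity in $\mathcal{D}$: the flip $\tau$ is replaced by the braiding $c$, and one must keep careful track of where each braiding appears, especially in the cocycle $\sigma$ and in verifying the crossed-product axioms. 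I would carry this out via string-diagram reasoning, which makes the insertions of $c$ transparent and reduces the proof to repeated applications of associativity, coassociativity, and the defining identity of $\gamma^{-1}$.
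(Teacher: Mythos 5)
You should first note that the paper offers no proof of Theorem \ref{thm1.1} to compare against: it is imported verbatim from \cite{AAj} (Theorem 1.3 there), and the paper only \emph{uses} it, in Proposition \ref{lem-bra} and Lemma \ref{lem-bijective}. So your proposal must stand on its own. In outline it follows the classical Doi--Takeuchi/Blattner--Cohen--Montgomery argument that \cite{AAj} transplants to braided categories: your formulas for $\gamma$, $\gamma^{-1}$, $\mu$, $\sigma$, $\Phi$, $\Psi$ are the correct ones, and the direction (ii)$\Rightarrow$(i) is fine as you describe it (in (i)$\Rightarrow$(ii) one should also normalize $\gamma$ so that $\gamma\circ\eta_H=\eta_A$, which is harmless since $\gamma$ may be replaced by its translate by $\gamma^{-1}\circ\eta_H$).

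The genuine gap is in your construction of $\Psi$. You write that ``colinearity of $\gamma$ guarantees that the first tensor factor lands in $A_0$'' and that ``it is here that the coequalizer hypothesis enters, to realise $\Psi$ as a morphism into $A_0\otimes H$.'' In an abstract braided monoidal category there is no notion of a tensor factor ``landing in'' a subobject. The universal property of the equalizer $\iota\colon A_0\to A$ lets you corestrict a morphism $f\colon X\to A$ satisfying $\rho_A\circ f=f\otimes\eta_H$ (this is why your $\mu$ and $\sigma$ are unproblematic), but it does not let you factor a morphism $A\to A\otimes H$ through $\iota\otimes\Id_H$: for that you would need $(-)\otimes H$ to preserve equalizers, which is not among the hypotheses --- the category is only assumed braided with equalizers and coequalizers, with no closedness or exactness. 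A coequalizer cannot repair this either, since coequalizers factor morphisms \emph{out of} quotients, not \emph{into} subobjects, so the mechanism you invoke would not carry the step (it does happen to work in the paper's application $\mathcal{D}=\Rep(H)$, where tensoring over $\Bbbk$ is exact, but not at the stated generality). The standard fix, and essentially what \cite{AAj} does, is to introduce the idempotent $q_A:=m_A\circ(\Id_A\otimes\gamma^{-1})\circ\rho_A$, prove $\rho_A\circ q_A=q_A\otimes\eta_H$ (this uses the braided analogue of $\rho_A\circ\gamma^{-1}=(\gamma^{-1}\otimes S)\circ c\circ\Delta_H$, itself obtained by a uniqueness-of-convolution-inverse argument in $\mathrm{Reg}(H,A\otimes H)$), deduce from the equalizer property that $q_A=\iota\circ p$ for a projection $p\colon A\to A_0$ with $p\circ\iota=\Id_{A_0}$, and then \emph{define} $\Psi:=(p\otimes\Id_H)\circ\rho_A$ outright. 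With $p$ in hand your diagram chase goes through, and it becomes clear what the (co)equalizer hypothesis is really for: splitting the idempotent $q_A$, not testing membership.
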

If we consider $\mathcal{D}=\mathbf{vect}_{\Bbbk}$, where $\mathbf{vect}_{\Bbbk}$ is just the braided monoidal category of vector spaces over $\Bbbk$, then all the results in this subsection return to the classical cases.
%\begin{remark}\label{rk-crossed}
%Suppose $A_0\#_{\sigma}H$ is a crossed product, and denote its multiplication as $m$ for simplicity.
%Recall the morphism $\gamma$ given by Equation \eqref{eq-cleft}.
%Directly, we know $A_0\#_{\sigma}H=\im(m|_{A_0\otimes \im\gamma})$.
%Assume $(A, \rho_A)$ is an $H$-cleft
%comodule algebra with an $H$-comodule morphism $\gamma\in \text{Reg}(H,A)$.
%Since above theorem, we can obtain that $A=\im(m_A|_{A_0\otimes \im\gamma})$ by considering the standard model $A_0\#_{\sigma}H$, where $A_0=\rho^{\text{coinv}}=\ker(\rho_A-\Id_A\otimes \eta_H)$.
%In particular, if we consider the braided tensor category $\Rep(H)$ determined by a quasitriangular Hopf algebra $(H,R)$, then any $H$-cleft comodule algebra $(A, \rho_A)$ in $\Rep(H)$ must satisfies $A=m(A_0\otimes \im\gamma)$, which will be simply denoted as follows
%$$A=A_0.\im\gamma.$$
%We will often use this fact in the next section.
%\end{remark}

\section{Twisted Tensor Product}
In this section, we introduce the twisted tensor product of two quasitriangular Hopf algebras.

Recall that a normalized twist for a Hopf algebra $H$ is an invertible element $J\in H\otimes H$ which satisfying
\begin{align*}
(\epsilon \otimes \Id)(J)= (\Id \otimes \epsilon)(J)=1
\end{align*}
and
$$(\Delta \otimes \Id)(J)(J \otimes 1)=(\Id  \otimes \Delta)(J)(1 \otimes J).$$
With this twist, we can define a twisted Hopf algebra $H^J$ whose comultiplication $\Delta^J$ is given by
$$\Delta^J(h)=J\Delta(h)J^{-1},\quad h\in H$$
while its algebra structure remains the same as that of $H$.

According to \cite[Theorem 2.2]{NP-1}, two Hopf algebras $H$ and $H'$ are gauge equivalent (which means their representation categories are equivalent as tensor categories)
if and only if there exists a twist $J$ of $H$ such that $H'\cong H^J$ as Hopf algebras.
Furthermore, if both $(H,R)$ and $(H',R')$ are quasitriangular Hopf algebras, then their representation categories are equivalent as braided tensor categories if and only if there exists a twist $J$ of $H$ such that
$$(H',R')\cong (H^J,R^J)\text{ as quasitriangular Hopf algebras},$$
where $R^J$ is given by
$$R^J:=J_{21}RJ^{-1}.$$
%We will use this $R^J$ without explanation in the following content.

\begin{definition}[Twisted Tensor Product]
Suppose $(H, R)$, $(K_1, R_1)$, $(K_2, R_2)$ are quasitriangular Hopf algebras.
We say $(H, R)$ is a twisted tensor product of $K_1$ and $K_2$ if there exists a twist $J$ of $K_1\otimes K_2$ such that
\begin{align*}
(H, R) \cong (K_1\otimes K_2)^{J}, \widetilde{R}^J)
\end{align*}
as quasitriangular Hopf algebras,
where $\displaystyle \widetilde{R}=\sum_{i,j}[(R_{1})_i\otimes (R_{2} )_{j}]\otimes [(R_{1})^i\otimes (R_{2})^{j}]$.
% , where $J=\displaystyle \sum_i (1\otimes \pi_2(S(R_i)))\otimes (\pi_1(R^i)\otimes 1)$.
%We say $(H, R)$ admits a twisted tensor product decomposition if there exist quotient Hopf algebras $(K_1, \pi_1)$, $(K_2, \pi_2)$ of $(H, R)$ such that $(H, R)$ is a twisted tensor product of $(K_1, \pi_1)$ and $(K_2, \pi_2)$.
\end{definition}

If the Hopf algebras $K_1,K_2$ above are not isomorphic to $\Bbbk$, we say that $(H, R)$ is a \emph{non-trivial} twisted tensor product of them. Note that the Hopf algebra $(H, R)$ in this definition may be triangular, and we will consider some of these cases (eg. Theorem \ref{thm-m2}).

Suppose $(H, R)$ is a quasitriangular Hopf algebra and $(K, \pi)$ is a quotient Hopf algebra of $(H, R)$.
We will simply denote the corresponding braided monoidal category $(\Rep(H), \otimes, \Bbbk, c)$ as $\Rep(H)$, i.e. the representation category of $H$ with the braiding structure $\tau\circ R$, where $\tau$ is the flip map. Denote by $\Rep(K)$ the representation category of the quasitriangular Hopf algebra $(K,(\pi\otimes \pi)(R))$.
Through the surjective Hopf map $\pi$, we shall view $\Rep(K)$ as a natural braided monoidal subcategory of $\Rep(H)$.

%Let $f\in K^*$. For convenience, we will view $f\in H^*$ via the injective map $\pi^*$.
Let
\begin{align}\label{eq-quo2}
L=\Phi_R(K^*)=\{((f\circ \pi) \otimes \id)(R_{21}\cdot R)|\;f\in K^*\}.
\end{align}
By Remark \ref{rk-nomco}, we see that $L$ is a normal left coideal subalgebra of $H$.
Therefore, $HL^+$ is a Hopf ideal and $H/HL^+(=K')$ is a quotient Hopf algebra, where
$$L^{+}=\{h\in L|\;\epsilon(h)=0\}=L\cap \ker \epsilon.$$
Let $\pi': H\to K'$ be the natural quotient map.
Then $(K',(\pi'\otimes \pi')(R))$ is also a quasitriangular Hopf algebra, leading to a corresponding braided monoidal category denoted by $\Rep(K')$.
Similarly, we view $\Rep(K')$ as a braided monoidal subcategory of $\Rep(H)$ through the quotient map $\pi'$.

Let $R_K:=(\pi\otimes \pi)(R)$ and $R_{K'}:=(\pi'\otimes \pi')(R)$.
Then, there is a natural quasitriangular structure on $K\otimes K'$ induced by $R_K,R_{K'}$ as follows:
\begin{equation}\label{eq-rdef}
R_{K\otimes K'}:=\sum_{i,j}[(R_K)_i\otimes (R_{K'})_j]\otimes [(R_K)^i\otimes (R_{K'})^j],
\end{equation}
where $\displaystyle R_K=\sum_i(R_K)_i\otimes (R_K)^i$ and $\displaystyle R_{K'}=\sum_j(R_{K'})_j\otimes (R_{K'})^j$.

A finite tensor category $(\mathcal{C}, \otimes, \mathds{1})$ \cite{EP} is defined as a rigid monoidal category where $\mathcal{C}$ is a finite abelian category.
Here ``finite" means that $\mathcal{C}(X,Y)$ are finite dimensional and $X$ has finite length for all $X,Y\in \text{Ob}(\mathcal{C})$, and every simple object of $\mathcal{C}$ has a projective cover, the number of isomorphism classes of simple objects of $\mathcal{C}$ is finite. The tensor product of $\mathcal{C}$ is $\Bbbk$-linear in each variable,
and the unit object of $\mathcal{C}$ is a simple object.
If $(\mathcal{C}, \otimes, \mathds{1})$ is equipped with a braiding $c$,
then it is termed a braided finite tensor category \cite{EP}. Denote $\Rep_f (H)$ as the braided tensor category consisting of finite dimensional representation of $H$ and the braiding structure is $\tau\circ R$.

Recall that a full subcategory of an abelian category is called \emph{topologizing subcategory} \cite{RA, Shi} if it is closed under finite direct sums and subquotients. By a \emph{braided tensor subcategory} of a braided tensor category $(\mathcal{C}, \otimes, \mathds{1}, c)$ we mean a subcategory of $\mathcal{C}$ containing the unit object of $\mathcal{C}$, closed under the tensor product of $\mathcal{C}$, and containing the braiding isomorphisms.

We shall recall that a finite braided tensor category $\mathcal{D}$ can factor through the relative M\"{u}ger centralizer as follows.
\begin{theorem}\cite[Theorem 4.17]{LC}\label{thm-category}
Let $(\mathcal{D}, \otimes, \mathds{1}, c)$ be a finite braided tensor category, let $\mathcal{E}$ be a topologizing non-degenerate braided tensor subcategory of $\mathcal{D}$. Then there is an equivalence of braided tensor categories:
$$\mathcal{D}\simeq \mathcal{E} \boxtimes C_{\mathcal{D}}(\mathcal{E}),$$
where
\begin{align*}
   \Ob(C_\mathcal{D}(\mathcal{E})) := \{Y \in \mathcal{D} | \;c_{Y,X} c_{X,Y}=\Id_{X\otimes Y} \text{ for all } X\in \mathcal{E}\}
\end{align*}
is the relative M\"{u}ger centralizer.
Moreover, if $\mathcal{D}$ is a ribbon category, then the above equivalence can be an equivalence of ribbon categories.
\end{theorem}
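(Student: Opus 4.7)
The plan is to construct a canonical braided tensor functor
$$F:\mathcal{E}\boxtimes C_{\mathcal{D}}(\mathcal{E}) \to \mathcal{D}, \qquad X\boxtimes Y \mapsto X\otimes Y,$$
and verify it is an equivalence by combining a coend-based full-faithfulness argument with a Frobenius--Perron dimension count using Shimizu's formula. First I would equip $F$ with the tensor constraint
$$F((X_1\boxtimes Y_1)\otimes(X_2\boxtimes Y_2)) \;=\; X_1 X_2 Y_1 Y_2 \xrightarrow{\,\mathrm{id}_{X_1}\otimes c_{X_2,Y_1}^{-1}\otimes \mathrm{id}_{Y_2}\,} X_1 Y_1 X_2 Y_2,$$
which is canonically defined because $Y_1\in C_{\mathcal{D}}(\mathcal{E})$ forces $c_{Y_1,X_2}\circ c_{X_2,Y_1}=\mathrm{id}$. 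The pentagon, the two hexagons, and the compatibility with the Deligne-product braiding $c^{\mathrm{Del}}_{X_1\boxtimes Y_1,\,X_2\boxtimes Y_2} = c_{X_1,X_2}\boxtimes c_{Y_1,Y_2}$ then reduce to the coherence axioms of $\mathcal{D}$ together with the defining centralizer condition.

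Next I would prove $F$ is fully faithful. The modular-case proof, which goes simple-object by simple-object, is unavailable here, so I would adapt Shimizu's coend approach: the inclusion $\iota:\mathcal{E}\hookrightarrow\mathcal{D}$ admits a right adjoint $R^\iota$ because $\mathcal{E}$ is a topologizing subcategory of the finite abelian category $\mathcal{D}$, and the coend
$$L_{\mathcal{E}} \;=\; \int^{X\in\mathcal{E}} X^\vee\otimes X$$
is a Hopf algebra in $\mathcal{E}$ whose canonical Hopf pairing is nondegenerate precisely because $\mathcal{E}$ is nondegenerate. Using this pairing one shows that the counit of $\iota\dashv R^\iota$ is split on the relevant $\mathrm{Hom}$-spaces, which translates into an isomorphism
$$\mathrm{Hom}_{\mathcal{E}\boxtimes C_{\mathcal{D}}(\mathcal{E})}(X_1\boxtimes Y_1, X_2\boxtimes Y_2) \xrightarrow{\,\sim\,} \mathrm{Hom}_{\mathcal{D}}(X_1\otimes Y_1,\, X_2\otimes Y_2)$$
that is exactly $F$ on morphisms.

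Finally, essential surjectivity uses Shimizu's non-semisimple formula
$\mathrm{FPdim}\,\mathcal{D} = \mathrm{FPdim}\,\mathcal{E}\cdot \mathrm{FPdim}\,C_{\mathcal{D}}(\mathcal{E})$,
combined with the observation that the essential image of a fully faithful (hence exact) braided tensor functor between finite tensor categories is a topologizing tensor subcategory of $\mathcal{D}$ with the same FP-dimension as its source; a topologizing tensor subcategory of $\mathcal{D}$ attaining the full FP-dimension must be all of $\mathcal{D}$, giving the equivalence. The main obstacle I expect is the full-faithfulness in the non-semisimple setting: establishing it cleanly requires either the coend machinery just sketched or an equivalent Hopf-algebraic argument inside $\mathcal{D}$, both of which are quite delicate and are precisely where Shimizu's and Laugwitz--Walton's extensions of M\"uger's semisimple theorem do the real work. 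The ribbon refinement comes essentially for free: in any ribbon category one has $\theta_{X\otimes Y}=(\theta_X\otimes\theta_Y)\circ c_{Y,X}c_{X,Y}$, which for $X\in\mathcal{E}$ and $Y\in C_{\mathcal{D}}(\mathcal{E})$ collapses to $\theta_X\otimes\theta_Y$, so $F$ transports the product ribbon structure on $\mathcal{E}\boxtimes C_{\mathcal{D}}(\mathcal{E})$ to the ribbon structure of $\mathcal{D}$.
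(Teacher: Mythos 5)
You should first note a structural point: the paper contains no proof of this statement at all --- it is imported verbatim, with citation, from \cite[Theorem 4.17]{LC} and then used as a black box (in Lemma \ref{lem-mugcen} and Theorem \ref{thm-ker}). So your attempt can only be compared with the proof in \cite{LC}, which builds on \cite{Shi}; and measured against that, your plan follows essentially the same route rather than a new one: the functor $F(X\boxtimes Y)=X\otimes Y$ with monoidal constraint supplied by the braiding (unambiguous precisely because of the centralizer condition), non-degeneracy of $\mathcal{E}$ entering through Shimizu's coend/M\"{u}ger-center machinery, a Frobenius--Perron dimension count to finish, and the ribbon refinement from the twist axiom $\theta_{X\otimes Y}=(\theta_X\otimes\theta_Y)\circ c_{Y,X}c_{X,Y}$, which collapses on centralizing pairs. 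That is the correct architecture, and the ribbon part of your argument is complete as stated.

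Two steps, however, fall short. The crucial one is full faithfulness: ``the counit of $\iota\dashv R^{\iota}$ is split on the relevant $\mathrm{Hom}$-spaces'' is an assertion, not an argument, and you rightly flag it as where the real work lies. The mechanism that actually closes this step is: after rewriting $\mathrm{Hom}_{\mathcal{D}}(X_1\otimes Y_1,X_2\otimes Y_2)\cong\mathrm{Hom}_{\mathcal{D}}(X_2^{\vee}\otimes X_1,\,Y_2\otimes Y_1^{\vee})$ by rigidity, one uses that the inclusion $\iota:\mathcal{E}\hookrightarrow\mathcal{D}$ is coreflective (the coreflection $R^{\iota}(B)$ is the largest subobject of $B$ lying in $\mathcal{E}$, which exists because $\mathcal{E}$ is topologizing and objects have finite length); since $C_{\mathcal{D}}(\mathcal{E})$ is itself closed under subquotients, $R^{\iota}(B)\in\mathcal{E}\cap C_{\mathcal{D}}(\mathcal{E})$ for $B\in C_{\mathcal{D}}(\mathcal{E})$, and this relative M\"{u}ger center is trivial by non-degeneracy, forcing $R^{\iota}(B)\cong\mathds{1}^{\oplus n}$ and hence $\mathrm{Hom}_{\mathcal{D}}(A,B)\cong\mathrm{Hom}_{\mathcal{E}}(A,\mathds{1})\otimes\mathrm{Hom}_{\mathcal{D}}(\mathds{1},B)$ for $A\in\mathcal{E}$. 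Without this (or an equivalent coend computation), your proof has a hole at its center. Second, your essential-surjectivity step contains a genuine error: the essential image of a fully faithful tensor functor need not be closed under subquotients, so you may not declare it a topologizing subcategory and apply FP-dimension maximality; indeed, proving that closure is essentially as hard as essential surjectivity itself. The standard repair avoids images altogether: an injective (fully faithful) tensor functor between finite tensor categories whose source and target have equal Frobenius--Perron dimension is automatically an equivalence (see \cite[Section 6.3]{EP}), and the required equality $\mathrm{FPdim}(\mathcal{E})\cdot\mathrm{FPdim}(C_{\mathcal{D}}(\mathcal{E}))=\mathrm{FPdim}(\mathcal{D})$ is exactly Shimizu's dimension formula in the case of trivial relative M\"{u}ger center.
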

Note that semisimple case of this theorem is obtained by many authors (\cite[Theorem 8.21.4]{EP}).
Then, we have
\begin{lemma}\label{lem-mugcen}
 With the notations above and assume that $H$ is finite dimensional, the braided tensor subcategory $\Rep_f(K')$ is the M\"{u}ger centralizer of $\Rep_f(K)$ in $\Rep_f(H)$.
\end{lemma}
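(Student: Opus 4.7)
The plan is to give an explicit algebraic characterisation of the M\"uger centralizer $C_{\Rep_f(H)}(\Rep_f(K))$ and identify it with $\Rep_f(K')$ as a subcategory of $\Rep_f(H)$. By definition a module $Y\in \Rep_f(H)$ lies in the centralizer if and only if the monodromy $R_{21}R \in H\otimes H$ acts as the identity on $X\otimes Y$ for every $X\in\Rep_f(K)$. Since $K$ is finite dimensional, every object of $\Rep_f(K)$ is a subquotient of a finite direct sum of copies of the regular representation $K$, and the centralizing condition is preserved under subobjects, quotients and direct sums in the $X$-slot; hence it suffices to impose it for $X = K$.

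Writing $R_{21}R = \sum_k Q_k\otimes Q^k$ so that $\Phi_R(g) = \sum_k g(Q_k)\,Q^k$, and specialising the centralizing equation to $x = 1_K\in K$, I obtain $\sum_k \pi(Q_k)\otimes Q^k\cdot y = 1_K\otimes y$ in $K\otimes Y$. Pairing with an arbitrary $\phi\in K^*$ in the first factor turns this into
\[
  \Phi_R(\phi\circ\pi)\cdot y \;=\; \phi(1)\, y
  \qquad \text{for every } \phi\in K^*.
\]
A short computation using $(\id\otimes\epsilon)(R_{21}R) = 1$ gives $\epsilon(\Phi_R(\phi\circ\pi)) = \phi(1)$, so the condition reads $\ell\cdot y = \epsilon(\ell)\, y$ for every $\ell\in L$, i.e.\ $L^+\cdot Y = 0$, equivalently $HL^+\cdot Y = 0$. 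This says precisely that $Y$ is a module over $H/HL^+ = K'$. Each step being reversible (and the reduction to $X = K$, $x = 1_K$ invertible via the same matrix-coefficient identity applied to an arbitrary $x\in K$), this yields the equality $C_{\Rep_f(H)}(\Rep_f(K)) = \Rep_f(K')$ of full subcategories of $\Rep_f(H)$. The braiding on $\Rep_f(K')$ induced by $R_{K'} = (\pi'\otimes\pi')(R)$ is by construction the restriction of the braiding of $\Rep_f(H)$, so the identification respects the braided structure.

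The main technical step is the matrix-coefficient translation of the double-braiding condition into the annihilation condition $L^+\cdot Y = 0$: one must choose $x = 1_K$ so that the functionals $\phi\mapsto\phi(\,\cdot\,)$ exhaust $K^*$, and recognise the resulting element of $H$ as $\Phi_R(\phi\circ\pi)\in L$. Everything else -- the reduction to $X = K$, the equivalence with being a $K'$-module, and the compatibility of braidings -- is routine once the setup is in place.
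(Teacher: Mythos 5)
Your proof is correct and takes essentially the same route as the paper's: reduce the double-braiding condition to the regular representation $K$, pair with functionals in $K^*$ to recognize the resulting operators as $\Phi_R(\phi\circ\pi)\in L$ acting through $\epsilon$, and conclude that the centralizing condition is exactly $HL^+\cdot Y=0$, i.e.\ $Y\in\Rep_f(K')$. The only difference is that you spell out the two reductions (every object of $\Rep_f(K)$ is a subquotient of some $K^{\oplus n}$, and the case of general $x\in K$ follows from $x=1_K$ via the translated functionals $\phi(\cdot\,x)$) which the paper leaves as ``not difficult to see.''
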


\begin{proof}
Let $X\in \text{Ob}(\Rep_f(K)), Y\in \text{Ob}(\Rep_f(H))$ such that $c_{Y,X} c_{X,Y}=\Id_{X\otimes Y}$.
Then we have $(R_{21}\cdot R)|_{X\otimes Y}=\Id_{X\otimes Y}$.
It's not difficult to see that $c_{Y,X} c_{X,Y}=\Id_{X\otimes Y}$ for all $X\in \text{Ob}(\Rep_f(K))$ if and only if $(R_{21}\cdot R)|_{K\otimes Y}=\Id_{K\otimes Y}$, where $K$ is the left regular represenation in $\text{Ob}(\Rep_f(K))$.
And the last equation is equivalent to $[(f\circ \pi \otimes \Id )(R_{21}\cdot R)]|_Y=\epsilon(f)\Id_Y$ for all $f\in K^*$.
Next, we only need to show that $(f\circ \pi \otimes \Id )(R_{21}\cdot R)]|_Y=\epsilon(f)\Id_Y$ if and only if $Y\in \text{Ob}(\Rep_f(K'))$.

Recall that $L=\Phi_R(K^*)$.
Then the equation $[(f\circ \pi \otimes \Id )(R_{21}\cdot R)]|_Y=\epsilon(f)\Id$ for all $f\in K^*$ is equivalent to $(HL^+).Y=0$, i.e. $Y\in \text{Ob}(\Rep_f(K'))$.
Thus, we achieve the desired conclusion.
\end{proof}
\begin{remark}
  The semisimple case of Lemma \ref{lem-mugcen} is already established in \cite{BSR}.
\end{remark}

\begin{theorem}\label{thm-ker}
Suppose $(H,R)$ is a finite dimensional quasitriangular Hopf algebra and $(K, \pi)$ is a quotient Hopf algebra of $(H, R)$.
Suppose that $(K, (\pi\otimes \pi)(R))$ is factorizable.
Let $K'=H/HL^+$ be the quotient Hopf algebra with $L$ defined in Equation \eqref{eq-quo2}.
Then there is a twist $J\in K\otimes K'$ such that
$$(H,R)\cong ((K\otimes K')^J,R_{K\otimes K'}^J) \text{ as quasitriangular Hopf algebras},$$
i.e. $(H, R)$ is a twisted tensor product of $(K, R_K)$ and $(K', R_{K'})$.
Furthermore, if $(H,R)$ is a ribbon Hopf algebra, then the isomorphism can also be an isomorphism of ribbon Hopf algebras.
\end{theorem}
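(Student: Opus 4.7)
The plan is to reduce the statement to a categorical factorization and then pass back to the Hopf algebra side via gauge equivalence. The main ingredients will be Shimizu's characterization of factorizability in terms of non-degeneracy of $\Rep_f(K)$, Theorem \ref{thm-category}, Lemma \ref{lem-mugcen}, and the correspondence between braided equivalences of representation categories and twist-equivalences of quasitriangular Hopf algebras (as recalled via \cite{NP-1} at the beginning of this section).

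First, I would set up the categorical picture. The surjection $\pi\colon H\twoheadrightarrow K$ makes $\Rep_f(K)$ a full braided tensor subcategory of $\Rep_f(H)$, which is topologizing since any $H$-subquotient of a $K$-module (regarded via $\pi$) is again killed by $\ker\pi$. Factorizability of $(K,(\pi\otimes\pi)(R))$ is equivalent to $\Rep_f(K)$ being a non-degenerate braided finite tensor category, so the hypotheses of Theorem \ref{thm-category} are met with $\mathcal{D}=\Rep_f(H)$ and $\mathcal{E}=\Rep_f(K)$. Consequently there is a braided equivalence
\begin{equation*}
\Rep_f(H)\;\simeq\;\Rep_f(K)\boxtimes C_{\Rep_f(H)}(\Rep_f(K)).
\end{equation*}
By Lemma \ref{lem-mugcen} the centralizer on the right is precisely $\Rep_f(K')$, so this reads $\Rep_f(H)\simeq\Rep_f(K)\boxtimes\Rep_f(K')$.

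Next I would identify the right hand side with the representation category of a quasitriangular Hopf algebra. Since $K$ and $K'$ are both finite dimensional quotients of $H$, we have a braided equivalence
\begin{equation*}
\Rep_f(K)\boxtimes\Rep_f(K')\;\simeq\;\Rep_f\!\bigl(K\otimes K',\,R_{K\otimes K'}\bigr),
\end{equation*}
where $R_{K\otimes K'}$ is the quasitriangular structure defined in \eqref{eq-rdef}; the braiding on the Deligne tensor product is exactly the one induced by $R_K$ on the first tensorand, $R_{K'}$ on the second, and the trivial braiding between the two factors. Composing with the previous equivalence yields a braided equivalence $\Rep_f(H,R)\simeq\Rep_f(K\otimes K',R_{K\otimes K'})$. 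Invoking the theorem cited at the beginning of Section 3 (\cite[Theorem 2.2]{NP-1} together with its quasitriangular refinement), such a braided equivalence is produced by a twist $J\in K\otimes K'$, i.e.
\begin{equation*}
(H,R)\;\cong\;\bigl((K\otimes K')^{J},\,R_{K\otimes K'}^{J}\bigr)
\end{equation*}
as quasitriangular Hopf algebras, which is the desired splitting.

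The ribbon addendum is handled by the same route: if $(H,R)$ carries a ribbon element then $\Rep_f(H)$ is a ribbon category and Theorem \ref{thm-category} upgrades the factorization to an equivalence of ribbon categories, so the translation step yields an isomorphism that also preserves ribbon elements. The step I expect to require the most care is justifying that the Deligne tensor product $\Rep_f(K)\boxtimes\Rep_f(K')$ is realized, as a braided category, by the explicit quasitriangular structure $R_{K\otimes K'}$ on $K\otimes K'$ (rather than by some twisted variant); this depends on the symmetric decoupling between the two factors in \eqref{eq-rdef}, and one has to check that this is indeed the braiding coming from the Deligne product. Once this identification is in place, the remaining work is formal and uses only the cited results.
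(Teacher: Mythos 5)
Your proposal is correct and follows essentially the same route as the paper's own proof: take $\mathcal{D}=\Rep_f(H)$, $\mathcal{E}=\Rep_f(K)$ (topologizing, and non-degenerate by factorizability), apply Theorem \ref{thm-category} together with Lemma \ref{lem-mugcen}, and translate the resulting braided equivalence into a twist via the correspondence recalled from \cite{NP-1}. The only difference is that you spell out the translation steps (identifying $\Rep_f(K)\boxtimes\Rep_f(K')$ with $\Rep_f(K\otimes K',R_{K\otimes K'})$ and invoking the gauge-equivalence theorem) that the paper leaves implicit in its three-sentence proof.
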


\begin{proof}
Let $\mathcal{D}=\Rep_f(H)$ and let $\mathcal{E}=\Rep_f(K)$.
By definition, $\mathcal{E}$ is a topologizing braided tensor subcategory.
Since $(K,R_K)$ is factorizable, we know that $\mathcal{E}$ is non-degenerate. Now, we can apply Theorem \ref{thm-category} and Lemma \ref{lem-mugcen} to obtain our desired result.
\end{proof}

\section{Main Results}
% In this subsection, we mainly review following Theorem \ref{thm1.1}, which is obtained in \cite[Theorem 1.3]{AAj}. It provides an equivalence between cleft extensions and crossed products in braided tensor categories as trivial braiding cases.

In this section, we introduce a novel approach to studying the splitting property of quasitriangular Hopf algebras.
This method allows us to eliminate the restriction of finiteness, extending the applicability of the results.
We start by defining the concept of exact factorization in Hopf algebras.

\begin{definition}[Exact Factorization]
Suppose $H$ is a Hopf algebra. Let $L_1, L_2$ be left coideal subalgebras of $H$.
We say $H=L_1L_2$ is an exact factorization if the linear map $\varphi: L_1\otimes L_2\to H$ defined by $\varphi(u\otimes v)=uv$ is bijective, where $u\in L_1$, $v\in L_2$. Moreover, if $L_1,L_2$ normal, then we call this exact factorization is normal.
\end{definition}

\begin{theorem}\label{thm-bi}
Suppose that $(H, R)$ is a quasitriangular (not necessarily finite-dimensional) Hopf algebra.
The following statements are equivalent:
\begin{enumerate}[(1)]
\item There exist quotient Hopf algebras $(K_1, \pi_1)$, $(K_2, \pi_2)$ of $(H, R)$ such that $(H, R)$ is a twisted tensor product of $(K_1, \pi_1)$ and $(K_2, \pi_2)$ with twist $J$:
$$J=\sum_i (1\otimes \pi_2(S(R_i)))\otimes (\pi_1(R^i)\otimes 1).$$
\item There exist normal left coideal subalgebras $L_1, L_2$ of $(H, R)$ such that
    \begin{itemize}
\item $H=L_1L_2$ is an exact factorization;

\item $(\pi_1\otimes \pi_2)(R_{21}R)=1\otimes 1$, where $\pi_1$(resp. $\pi_2$) denote the associated natural quotient map with $H/HL_1^+$(resp. $H/HL_2^+$);
\end{itemize}

\end{enumerate}
\end{theorem}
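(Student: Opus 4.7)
The plan is to establish each direction of the biconditional separately, treating the theorem as a translation between the categorical data of a twisted tensor product and the algebraic data of a normal exact factorization.

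For $(1)\Rightarrow (2)$, I would take $L_2$ to be the image of $K_1\otimes 1$ and $L_1$ the image of $1\otimes K_2$ under the isomorphism $H\cong (K_1\otimes K_2)^J$. Since the twist modifies only the comultiplication and leaves the algebra unchanged, these remain subalgebras whose product exhausts $H$, yielding the exact factorization $H=L_1L_2$. For the coideal and normality properties, I would compute $\Delta^J(k\otimes 1)$ and $\Delta^J(1\otimes k)$ directly from $\Delta^J(h)=J\Delta(h)J^{-1}$; the specific shape of $J$, together with the counit axioms for $R$, ensures that $\Delta^J$ sends each $L_i$ into $H\otimes L_i$ and that conjugation by $H$ preserves them. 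The identifications $H/HL_1^+\cong K_1$ and $H/HL_2^+\cong K_2$ then follow from the tensor product structure modulo the augmentation ideals. Finally, the centralizer condition $(\pi_1\otimes \pi_2)(R_{21}R)=1\otimes 1$ reduces to a computation using $\widetilde{R}^J=J_{21}\widetilde{R}J^{-1}$ together with the fact that $\pi_1$ kills the $K_2$-factor and $\pi_2$ the $K_1$-factor.

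For $(2)\Rightarrow (1)$, the first step is to show that the restrictions $\pi_2|_{L_1}: L_1\to K_2$ and $\pi_1|_{L_2}: L_2\to K_1$ are algebra isomorphisms. Injectivity follows from the exact factorization: under the bijection $\varphi: L_1\otimes L_2\to H$ the ideal $HL_2^+$ corresponds to $L_1\otimes L_2^+$, so $L_1\cap HL_2^+=0$; surjectivity follows from $K_2=\pi_2(H)=\pi_2(L_1L_2)=\pi_2(L_1)$, using $\pi_2(L_2)\subseteq \Bbbk$. Composing these bijections with $\varphi$ produces a linear isomorphism $\Psi: K_1\otimes K_2\to H$. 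I would then verify that $\Psi$ upgrades to an isomorphism of quasitriangular Hopf algebras $((K_1\otimes K_2)^J,\widetilde{R}^J)\cong (H,R)$, which amounts to checking that (a) $J$ is a genuine twist of $K_1\otimes K_2$, (b) $\Psi$ intertwines the twisted comultiplication $\Delta^J$ with $\Delta_H$, and (c) $(\Psi\otimes\Psi)(\widetilde{R}^J)=R$.

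The main obstacle lies in step (b) of the reverse direction. The subalgebras $L_1$ and $L_2$ do not commute in $H$ in general; rather, their interaction is controlled by the $R$-matrix via the braiding in $\Rep(H)$. The centralizer condition $(\pi_1\otimes \pi_2)(R_{21}R)=1\otimes 1$ is precisely the input needed to ensure that, after projecting through $\pi_1$ and $\pi_2$, this interaction is captured exactly by the cocycle $J$ with no additional terms surviving. Carrying out this verification will rely on manipulating the quasitriangularity axioms $(\Delta\otimes \id)(R)=R_{13}R_{23}$ and $(\id\otimes\Delta)(R)=R_{13}R_{12}$ in Sweedler notation, together with the normality of the $L_i$, which governs how elements of $H$ commute with elements of $L_i$ up to the adjoint action.
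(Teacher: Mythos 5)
Your plan for $(2)\Rightarrow(1)$ breaks down at the algebra level, before the comultiplication problem you flag as the ``main obstacle'' even arises. A Drinfeld twist deforms only the comultiplication: the algebra structure of $(K_1\otimes K_2)^J$ is the plain tensor product algebra $K_1\otimes K_2$ (this paper's ``twisted tensor product'' is not the twisted tensor product \emph{of algebras} in which multiplication is deformed by a cocycle). Hence any isomorphism $(K_1\otimes K_2)^J\cong H$ must in particular be an algebra isomorphism $K_1\otimes K_2\to H$, and your $\Psi$, which multiplies the lifts $(\pi_1|_{L_2})^{-1}(k_1)\in L_2$ and $(\pi_2|_{L_1})^{-1}(k_2)\in L_1$, is multiplicative only if every element of $L_1$ commutes with every element of $L_2$. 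That is not among the hypotheses of (2) and fails in general; no choice of twist can repair it, since $J$ never touches the multiplication. The paper's proof is built in the opposite direction precisely to avoid this: it takes $F=(\pi_1\otimes\pi_2)\circ\Delta: H\to (K_1\otimes K_2)^J$, which is \emph{automatically} an algebra map, invokes \cite[Lemma 4.2]{SchH} to see that $J$ is a twist and that $F$ respects the twisted coalgebra structure, verifies $(F\otimes F)(R)=R_{K_1\otimes K_2}^J$ from the condition $(\pi_1\otimes\pi_2)(R_{21}R)=1\otimes 1$, and then proves bijectivity from the exact factorization, computing $F(b)=\pi_1(b)\otimes 1$ for $b\in L_2$ but $F(a)=S_{(K_1\otimes K_2)^J}\bigl(1\otimes S_{K_2}^{-1}\pi_2(a)\bigr)$ for $a\in L_1$, and using a basis argument in $H=L_2L_1$. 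This last formula is the key point you are missing: the correct isomorphism carries $L_1$ onto $S_{(K_1\otimes K_2)^J}(1\otimes K_2)$, a conjugate $U_J(1\otimes K_2)U_J^{-1}$ with $U_J=\sum_i\pi_1(S(R^i))\otimes\pi_2(S(R_i))$, and \emph{not} onto $1\otimes K_2$; this antipode correction is exactly what accounts for the noncommutativity of $L_1$ and $L_2$, and it is absent from your $\Psi$.

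A related error occurs in your $(1)\Rightarrow(2)$: the image of $1\otimes K_2$ is not a left coideal subalgebra of $(K_1\otimes K_2)^J$. Since the first legs of $J^{\pm 1}$ lie in $1\otimes K_2$ and the second legs in $K_1\otimes 1$, one finds $\Delta^J(1\otimes k)=J\Delta(1\otimes k)J^{-1}\in (1\otimes K_2)\otimes(K_1\otimes K_2)$, so $1\otimes K_2$ is a normal \emph{right} coideal subalgebra; its image under $\Delta^J$ does not land in $(K_1\otimes K_2)\otimes(1\otimes K_2)$ in general, because the second legs of $J$ and $J^{-1}$ are separated by $1\otimes k_{(1)}$ in the first tensor factor and cannot cancel unless, say, $K_2$ is commutative. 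Statement (2) requires normal \emph{left} coideal subalgebras, so you must take $L_2=S_{(K_1\otimes K_2)^J}(1\otimes K_2)$, as the paper does, using that the antipode converts a normal right coideal subalgebra into a normal left one. With that correction (and after matching the labels $L_i\leftrightarrow\pi_i$ consistently), your verification of the exact factorization and of $(\pi_1\otimes\pi_2)(R_{21}R)=1\otimes 1$ does go through, but as it stands both directions of your argument rely on choices of $L_1,L_2$ that do not satisfy the properties claimed of them.
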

\begin{proof}
(1)$\Rightarrow$(2):
We assume that $H=((K_1\otimes K_2)^J,R_{K_1\otimes K_2}^J)$.
Let $L_1=K_1\otimes 1\subseteq H$ and $L_2=S(1\otimes K_2)\subseteq H$.
It is clear that $K_1\otimes 1$ is normal left coideal subalgebra and  $1\otimes K_2$ is normal right coideal subalgebra of $(H, R)$, i.e. a right coideal subalgebra such that $S(h_1)kh_2\in K_2$ for all $h\in H,k\in K_2$.
This implies that $L_1,L_2$ are normal left coideal subalgebras of $(H, R)$. Moreover, $(\pi_1\otimes \pi_2)(R_{21}R)=1\otimes 1$ in this case. Now we see that the map $\varphi$ is bijective.
Thus, the normal left coideal subalgebras $L_1,L_2$ are obtained as required.

(2)$\Rightarrow$(1):
Let $K_1=H/HL_1^+$ and $K_2=H/HL_2^+$ and denote the associated natural quotient map by $\pi_1$ and $\pi_2$.
Let
\begin{align*}
J:=\sum_i (1\otimes \pi_2(S(R_i)))\otimes (\pi_1(R^i)\otimes 1)\in K_1\otimes K_2.
\end{align*}
By \cite[Lemma 4.2]{SchH}, we see that $J$ is a twist for the quasitriangular Hopf algebra $K_1\otimes K_2$.
We define $F: H\to K_1\otimes K_2$ as $F=(\pi_1\otimes \pi_2)\circ\Delta$.
% \begin{equation*}
%       \begin{matrix}
%         F:& H&\rightarrow & K_1\otimes K_2 \\
%          \;&h & \mapsto & \pi_1(h_{(1)})\otimes \pi_2(h_{(2)})
%       \end{matrix}
%        \end{equation*}
Then $F$ is a Hopf map.
Moreover, $(F\otimes F)(R)=R_{K_1\otimes K_2}^J$ by using $(\pi_1\otimes \pi_2)(R_{21}R)=1\otimes 1$, where
\begin{align*}
R_{K_1\otimes K_2}^J=J_{21}R_{K_1\otimes K_2}J^{-1}.
\end{align*}

Next, we show $F$ is surjective.
Since $L_1$ is left coideal and $\pi_1(a)=\epsilon(a)1$ for $a\in L_1$, we have that $a\otimes 1= a_{(1)}\otimes \pi_1(a_{(2)})$ for all $a\in L_1$.
This implies for all $a\in L_1$,
\begin{align*}
1\otimes S^{-1}(a) =\pi_1 \big(S^{-1}(a)_{(1)}\big)\otimes S^{-1}(a)_{(2)}.
\end{align*}
Applying the Hopf map $F$, we see that for $a\in L_1$,
\begin{align*}
F(S^{-1}(a))=1\otimes S_{K_2}^{-1}\circ\pi_2(a).
\end{align*}
Therefore $S_{(K_1\otimes K_2)^J}(1\otimes \pi_2(L_1))\subseteq \im(F)$.
By the fact that  $\varphi$ is bijective, we have that $\pi_2|_{L_1}$, $\pi_1|_{L_2}$ are bijective.
Thus, $S_{(K_1\otimes K_2)^J}(1\otimes K_2)\in \im(F)$.
Similarly we have $F(b)=\pi_1(b)\otimes 1$ for $b\in L_2$.
Thus, we obtain that $K_1\otimes 1\in \im(F)$.
By the fact that $F$ is Hopf map and  $(K_1\otimes K_2)^J=(K_1\otimes 1)S_{(K_1\otimes K_2)^J}(1\otimes \pi_2(L_1))$,  we see that $F$ is surjective.

Now we show that $F$ is injective.
Let $\{x_i|i\in I\}$ be a linear basis of $L_1$ and $\{y_j|j\in I'\}$ a linear basis of $L_2$.
Note that $\pi_2|_{L_1}, \pi_1|_{L_2}$ are bijective.
We see that $\{S_{K_2}^{-1}\circ \pi_2(x_i)|i\in I\}$ is a linear basis of $1\otimes K_2$ and $\{\pi_1(y_j)|j\in I'\}$ is a linear basis of $K_1\otimes 1$.
Let $h\in H$ such that $F(h)=0$.
Since $L_1$ is normal, we obtain $H=L_2L_1$.
Thus, we write $\displaystyle h=\sum_{i,j =1}^n\lambda_{ij}y_jx_i$ for some $n\in \mathbb{N}^+$ and $\lambda_{ij}\in \Bbbk$.
Note that $F(a)=S_{(K_1\otimes K_2)^J}(1\otimes S_{K_2}^{-1}\circ\pi_2(a))$ for $a\in L_1$ and $F(b)=\pi_1(b)\otimes 1$ for $b\in L_2$, we obtain
$$F\left(\sum_{i, j=1}^n\lambda_{ij}y_jx_i\right)=\sum_{i, j=1}^n \lambda_{ij}[\pi_1(y_j)\otimes 1][S_{(K_1\otimes K_2)^J}(1\otimes S_{K_2}^{-1}\circ\pi_2(x_i))].$$
$L_1'=K_1\otimes 1\subseteq (K_1\otimes K_2)^J$ and let $L_2'=S_{(K_1\otimes K_2)^J}(1\otimes K_2)\subseteq (K_1\otimes K_2)^J$.
By statement (1), we have that the following map is bijective
\begin{equation*}
      \begin{matrix}
        \varphi':& L_1'\otimes L_2' &\rightarrow & (K_1\otimes K_2)^J \\
         \;&u\otimes v & \mapsto & uv
      \end{matrix}
       \end{equation*}
Combing this with the fact that $\{1\otimes S_{K_2}^{-1}\circ \pi_2(x_i)|i\in I\}$ is a linear basis of $1\otimes K_2$ and $\{\pi_1(y_j)\otimes 1|j\in I'\}$ is a linear basis of $K_1\otimes 1$, we get that $\lambda_{ij}=0$ for all $i,j=1, \ldots, n$.
This implies $h=0$, and hence $F$ is injective.
\end{proof}

\begin{remark}
It is worth to point out that Theorem \ref{thm-bi} is true for infinite-dimensional quasitriangular Hopf algebra $(H, R)$.
\end{remark}

\begin{proposition}\label{lem-bra}
Suppose $\mathcal{D}$ is a braided tensor category with equalizers and coequalizer and $H, K$ Hopf algebras in $\mathcal{D}$ with a surjective Hopf map $\pi: H\to K$.
If there exist a Hopf algebra $H_1$ and Hopf maps $i: K\to H_1$, $F_H: H_1\to H$ such that $\pi\circ F_H=i$, i.e. the following diagram commutes:
 \begin{equation}
\xymatrix{
    H\ar[r]^{\pi} & K \ar[ld]^{i} \\
    H_1\ar[u]^{F_{H}}  &
     }
\end{equation}
Then $(H,\rho)$ is a crossed product of $H^{co\pi}$ and $K$, where $\rho=(\Id\otimes \pi)\circ \Delta$.
\end{proposition}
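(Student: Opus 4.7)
The plan is to reduce the proposition to Theorem \ref{thm1.1}, the braided version of the cleft extension theorem. By that theorem, to obtain a crossed product decomposition $H \cong H^{co\pi}\#_\sigma K$ as right $K$-comodule algebras in $\mathcal{D}$, it is enough to show that $(H,\rho)$ with $\rho=(\Id\otimes\pi)\circ\Delta$ is a right $K$-cleft comodule algebra in $\mathcal{D}$. That $(H,\rho)$ is a $K$-comodule algebra in $\mathcal{D}$ is routine: coassociativity and counitality of $\rho$ follow from $\pi$ being a coalgebra morphism in $\mathcal{D}$, and multiplicativity relative to the braided tensor product algebra structure on $H\otimes K$ comes from $\Delta_H$ and $\pi$ being algebra morphisms in $\mathcal{D}$, using naturality of the braiding $c_{K,H}$.

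The heart of the argument is to produce a convolution-invertible $K$-comodule morphism $\gamma\in\text{Reg}(K,H)$. The natural candidate extracted from the triangle is
\[
\gamma := F_H\circ i\colon K\to H,
\]
a composition of Hopf morphisms in $\mathcal{D}$, hence itself a Hopf morphism. The commuting triangle hypothesis is precisely the statement that $\pi\circ\gamma=\Id_K$, i.e.\ $\gamma$ is a Hopf-theoretic section of the projection $\pi$.

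Granting $\pi\circ\gamma=\Id_K$, both required properties of $\gamma$ reduce to short diagrammatic calculations that are insensitive to the braiding. The right $K$-colinearity
\[
\rho\circ\gamma=(\Id_H\otimes\pi)\circ\Delta_H\circ\gamma=(\gamma\otimes\pi\gamma)\circ\Delta_K=(\gamma\otimes\Id_K)\circ\Delta_K
\]
uses only that $\gamma$ is a coalgebra morphism together with $\pi\gamma=\Id_K$. The convolution inverse of $\gamma$ is $\gamma\circ S_K$: since $\gamma$ is an algebra morphism in $\mathcal{D}$,
\[
\gamma * (\gamma\circ S_K)=m_H\circ(\gamma\otimes\gamma)\circ(\Id\otimes S_K)\circ\Delta_K=\gamma\circ m_K\circ(\Id\otimes S_K)\circ\Delta_K=\eta_H\circ\epsilon_K,
\]
and symmetrically for $(\gamma\circ S_K)*\gamma$. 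Hence $(H,\rho)$ is $K$-cleft, and Theorem \ref{thm1.1} yields the desired crossed product decomposition.

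The only real conceptual obstacle is recognizing that the data of the triangle encodes exactly a Hopf-algebraic section $\gamma$ of $\pi$; once that identification is made, nothing else beyond the functoriality of Hopf structures in braided categories and the quoted Theorem \ref{thm1.1} is needed, and the braiding plays no delicate role in the two verifications above.
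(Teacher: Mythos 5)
Your proof is correct and takes essentially the same route as the paper's: both set $\gamma = F_H\circ i$, check that it is a convolution-invertible right $K$-comodule morphism (reading the commuting triangle as $\pi\circ F_H\circ i=\Id_K$), and then invoke Theorem \ref{thm1.1}. The only cosmetic difference is that you exhibit the convolution inverse as $\gamma\circ S_K$ (via the algebra-morphism property) while the paper uses $S_H\circ\gamma$ (via the coalgebra-morphism property); these coincide because $\gamma$ is a Hopf morphism.
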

\begin{proof}
We define the map $\gamma:K\rightarrow H$ by $\gamma= F_H\circ i$. Since the coaction of $K$ on $H$ is given by $(\id\otimes \pi) \circ \Delta$,  we know that $\gamma$ is right $K$-comodule map.
By a direct computation, we have that $\gamma *  (S\circ\gamma)=\eta_H \circ \epsilon_K$ and $(S\circ\gamma)  *  \gamma=\eta_H \circ \epsilon_K$, i.e. $\gamma^{-1}=S\circ \gamma$ in $\emph{\text{Reg}}(K, H)$.
Thus, $H^{co\pi}\hookrightarrow H$ is cleft extension in $\mathcal{D}$.
Now, applying Theorem \ref{thm1.1}, we see that $H$ is a crossed product of $H^{co\pi}$ and $K$.
\end{proof}

% We have that $(H, (\id\otimes \pi)\Delta)$ is a $K$-comodule.
% In the following, we shall show that $(\underline{H}, (\id\otimes \underline{\pi})\underline{\Delta})$ is a $\underline{K}$-comodule.

% Recall the quotient map $\pi:H\rightarrow K$.
% Since $\underline{H}=H$ as algebras and $\underline{K}=K$ as algebras, the map $\pi:\underline{H}\rightarrow \underline{K}$ is at least a well defined linear map.

\begin{lemma}\label{lem-bijective}
Suppose $\mathcal{D}=\Rep(H)$ for some quasitriangular Hopf algebra $(H,R)$. Let $C$ be a Hopf algebra
in $\mathcal{D}$. Let $(A, \rho_A)$ be an $C$-cleft comodule algebra in $\mathcal{D}$ with a $C$-comodule morphism $\gamma$ in \emph{Reg}$(C,A)$. Then the map $\varphi: \rho^{\text{coinv}} \otimes C \to A$ defined by $\varphi=m_A \circ (i_{\rho^{\text{coinv}}}\otimes \gamma)$ is bijective linear map,
where $\rho^{\text{coinv}}=\ker(\rho_A-\Id_A\otimes \eta_C)$ and $i_{\rho^{\text{coinv}}}$ is the natural inclusion from $\rho^{\text{coinv}}$ into $A$.
\end{lemma}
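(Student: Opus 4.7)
The plan is to produce an explicit two-sided inverse to $\varphi$. Since $\gamma$ belongs to $\text{Reg}(C, A)$, it admits a convolution inverse $\gamma^{-1}$, and I would set, in pseudo-Sweedler notation,
\[
\psi(a) := a_{(0)} \, \gamma^{-1}(a_{(1)}) \otimes a_{(2)},
\]
i.e., $\psi = (m_A \otimes \id_C) \circ (\id_A \otimes \gamma^{-1} \otimes \id_C) \circ (\id_A \otimes \Delta_C) \circ \rho_A$. Since $\Rep(H)$ is abelian it admits equalizers and coequalizers, so an alternative would be to invoke Theorem \ref{thm1.1} directly and trace through the resulting crossed-product isomorphism $A \cong \rho^{\text{coinv}} \#_\sigma C$; but the direct construction of $\psi$ is more transparent.

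The first step is to verify that $\psi$ factors through $\rho^{\text{coinv}} \otimes C \hookrightarrow A \otimes C$, i.e., that $a_{(0)}\gamma^{-1}(a_{(1)})$ is coinvariant. This is where the argument is most delicate in the braided setting: one first derives, from the convolution identity $\gamma * \gamma^{-1} = \eta_A \circ \epsilon_C$ and the comodule property of $\gamma$, a braided analogue of the classical identity $\rho_A(\gamma^{-1}(c)) = \gamma^{-1}(c_{(2)}) \otimes S_C(c_{(1)})$, and then uses the multiplicative law $\rho_A \circ m_A = (m_A \otimes m_C) \circ \rho_{A \otimes A}$ together with the antipode axiom on $C$ to collapse the resulting expression.

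With coinvariance in hand, checking $\varphi \circ \psi = \id_A$ and $\psi \circ \varphi = \id_{\rho^{\text{coinv}} \otimes C}$ reduces to the convolution identities $\gamma^{-1} * \gamma = \gamma * \gamma^{-1} = \eta_A \circ \epsilon_C$ combined with coassociativity of $\rho_A$. For the second equality one additionally uses that $\rho_A(x) = x \otimes 1_C$ for $x \in \rho^{\text{coinv}}$, so that by naturality of the braiding on the unit one has $c_{C, A}(1_C \otimes -) = (-) \otimes 1_C$; this untangles $x$ from $\gamma(c)$ when computing $\rho_A(x \gamma(c))$ and leaves $x\gamma(c_{(1)}) \otimes c_{(2)}$, after which convolution-invertibility of $\gamma$ finishes the job.

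The step I expect to be most delicate is keeping precise track of braiding insertions when computing coactions of products; pseudo-Sweedler notation can obscure where braids are required, so the cleanest execution is as a string-diagram calculation in the graphical calculus for Hopf algebras in a braided category (cf.\ \cite{AAj, MSaF}).
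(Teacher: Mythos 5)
Your proposal is correct, but it takes a genuinely different route from the paper. The paper's proof is a two-line reduction: it invokes Theorem \ref{thm1.1} (i.e.\ \cite[Theorem 1.3]{AAj}) to identify $A$ with the crossed product $\rho^{\text{coinv}}\#_\sigma C$ as $C$-comodule algebras, and then observes that in the standard model, where $\gamma=\eta_{\rho^{\text{coinv}}}\otimes\Id$, the map $\varphi$ is literally the identity. Your proof instead constructs the inverse $\psi(a)=a_{(0)}\gamma^{-1}(a_{(1)})\otimes a_{(2)}$ explicitly, which amounts to re-deriving, in the braided setting, exactly the portion of \cite[Theorem 1.3]{AAj} that the paper cites (the alternative you mention in passing \emph{is} the paper's proof). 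What your approach buys: it is self-contained, it is transparent about where braidings enter (the formula $\rho_A\circ\gamma^{-1}=(\gamma^{-1}\otimes S_C)\circ c_{C,C}\circ\Delta_C$, obtained from uniqueness of convolution inverses using that $\rho_A$ is an algebra map into the braided tensor product algebra $A\otimes C$), and it treats an \emph{arbitrary} cleaving map $\gamma$ directly. This last point is not merely cosmetic: the paper's reduction "to the standard case with standard $\gamma$" tacitly uses that the isomorphism of Theorem \ref{thm1.1} is the one built from the given $\gamma$, so that $\gamma$ transports to the standard section; a generic comodule-algebra isomorphism would carry $\gamma$ to some non-standard cleaving map of the crossed product (these exist), and the reduction would need an extra word. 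Your argument sidesteps this entirely. What the paper's approach buys is brevity, at the cost of leaning on the cited theorem. The only part of your write-up that remains to be executed is the coinvariance of $a_{(0)}\gamma^{-1}(a_{(1)})$ and the braided formula for $\rho_A\circ\gamma^{-1}$; your plan for both (convolution-inverse uniqueness, multiplicativity of $\rho_A$, the antipode axiom, and triviality of the braiding against the unit $\eta_C$) is the standard and correct one, so this is a matter of bookkeeping rather than a gap.
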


\begin{proof}
Since $(A, \rho_A)$ be an $C$-cleft comodule algebra in $\mathcal{D}$, $A\cong \rho^{\text{coinv}} \#_{\sigma}C$ as $C$-comodule algebras in $\mathcal{D}$ by Theorem \ref{thm1.1}. Thus, we only need to consider the standard case, which is $A=\rho^{\text{coinv}} \#_{\sigma}C$ as $C$-comodule algebras with the map $\gamma$ defined by $\gamma=\eta_{\rho^{\text{coinv}}} \otimes \Id$. Direclty, the map $\varphi=m \circ (i_{\rho^{\text{coinv}}}\otimes \gamma)$ is just the identity map, which implies that $\varphi$ is bijective.
\end{proof}

\begin{theorem}\label{thm-m1}
Suppose that $(H, R)$ is a quasitriangular Hopf algebra (may infinite dimension) and $(K,\pi)$ a quotient Hopf algebra of $(H, R)$.
Suppose that $(K,(\pi\otimes \pi)(R))$ is factorizable.
% Let
% \begin{align*}
% \Phi_R(K^*)=\{((f\circ \pi) \otimes \id)(R_{21}\cdot R)|\;f\in K^*\}.
% \end{align*}
Then there exists a quotient Hopf algebra $(K', \pi')$ of $(H, R)$ such that $(H, R)$ is a twisted tensor product of $(K, \pi)$ and $(K', \pi')$.
\end{theorem}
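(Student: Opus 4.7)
The plan is to verify the two conditions of Theorem \ref{thm-bi} for the normal left coideal subalgebras $L_1 := H^{co\pi}$ and $L_2 := L = \Phi_R(\pi^*(K^*))$ of $H$; normality of $L_2$ is Remark \ref{rk-nomco}, and $K$ is automatically finite-dimensional by the definition of factorizability. The key device that removes any finiteness hypothesis on $H$ is to transport everything into the braided tensor category $\Rep(H)$ and use factorizability of $K$ to produce a Hopf splitting of $\underline{\pi}$ there; the cleft-extension machinery of Section~2 then yields the exact factorization of $H$.

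Since $(K, R_K)$ is factorizable, $\underline{\Phi_{R_K}}\colon \underline{K^*}\to \underline{K}$ is a Hopf isomorphism in $\Rep(K)\subseteq \Rep(H)$. Defining $\gamma := \underline{\Phi_R}\circ \underline{\pi^*}\circ \underline{\Phi_{R_K}}^{-1}\colon \underline{K}\to \underline{H}$, where $\underline{\pi^*}\colon \underline{K^*}\to \underline{H^o}$ is the Hopf map in $\Rep(H)$ dual to $\pi$ (equivariance uses $(\pi\otimes \pi)(R) = R_K$), yields a Hopf map in $\Rep(H)$. The direct computation $\pi\circ \Phi_R\circ \pi^*(f) = ((f\circ \pi)\otimes \pi)(R_{21}R) = \Phi_{R_K}(f)$ for $f\in K^*$ gives $\underline{\pi}\circ \gamma = \Id_{\underline{K}}$, so $\gamma$ splits $\underline{\pi}$; viewed as a linear subspace of $H$, the image of $\gamma$ is precisely $L_2$.

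Applying Proposition \ref{lem-bra} with $H_1 = \underline{K}$ and $i = \Id$ exhibits $\underline{H}$ as a crossed product of $\underline{H^{co\pi}}$ by $\underline{K}$ in $\Rep(H)$, and Lemma \ref{lem-bijective} then yields the linear bijection $\underline{H^{co\pi}}\otimes \underline{K}\to \underline{H}$, $u\otimes v\mapsto u\gamma(v)$. Since transmutation leaves the underlying algebra structures untouched and $\underline{H^{co\pi}} = H^{co\pi}$ by Lemma \ref{lem-copp}, this reads as bijectivity of the ordinary multiplication map $L_1\otimes L_2\to H$, which is the exact factorization required by Theorem \ref{thm-bi}(2). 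Normality of $L_1$ gives $HL_1^+ = L_1^+H$; writing any $h\in \ker\pi$ as $\sum u_i\gamma(k_i)$ and using $\pi(\gamma(k_i)) = k_i$ forces $\sum \epsilon(u_i)k_i = 0$, so $h = \sum(u_i - \epsilon(u_i))\gamma(k_i) \in L_1^+H = HL_1^+$; hence $\ker \pi = HL_1^+$ and $K\cong H/HL_1^+$ canonically via $\pi$.

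It remains to verify the $R$-matrix condition. Set $\pi_2\colon H\to K' := H/HL_2^+$. Since $\pi_2(L_2)\subseteq \Bbbk\cdot 1$, for each $f\in K^*$ one computes $(\pi^*(f)\otimes \pi_2)(R_{21}R) = \pi_2(\Phi_R(\pi^*(f))) = \epsilon(\Phi_R(\pi^*(f)))\cdot 1 = f(1)\cdot 1$, using $(\epsilon\otimes \Id)(R) = (\Id\otimes \epsilon)(R) = 1$. Injectivity of $\pi^*$ on the finite-dimensional space $K^*$ then forces $(\pi_1\otimes \pi_2)(R_{21}R) = 1\otimes 1$ after the identification $\pi_1 = \pi$ from the previous paragraph, and Theorem \ref{thm-bi} delivers the twisted tensor product decomposition. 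The most delicate step is the identification $K\cong H/HL_1^+$ via the cleft decomposition, since a direct Hopf splitting of $\pi$ in ordinary vector spaces need not exist when $H$ is infinite-dimensional; passing to $\Rep(H)$ is what buys us the splitting $\gamma$.
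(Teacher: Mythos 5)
Your proposal is correct and follows essentially the same route as the paper's own proof: the same choice $L_1=H^{co\pi}$, $L_2=\Phi_R(\pi^*(K^*))$, the same splitting $\gamma=\underline{\Phi_R}\circ\tilde{i}\circ\underline{\Phi_{\overline{R}}}^{-1}$ obtained by transmutation and factorizability, the same appeal to Proposition \ref{lem-bra}, Lemma \ref{lem-bijective} and Lemma \ref{lem-copp} to get the exact factorization, and then Theorem \ref{thm-bi}. In fact your write-up is slightly more careful than the paper at two points it glosses over: the explicit verification that $\ker\pi=HL_1^+$ (so that $H/HL_1^+$ really is $K$ and $\pi_1$ may be identified with $\pi$), and the check of $(\pi_1\otimes\pi_2)(R_{21}R)=1\otimes 1$ via $\pi_2$ annihilating $L_2^+$, where the paper's text as written invokes $\pi_1$ instead of $\pi_2$.
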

\begin{proof}
%We denote $(\pi\otimes \pi)(R)$ as $\overline{R}$.
Let $\mathcal{D}=\Rep(H)$ be the braided monoidal category.
We define $\tilde{i}: \underline{K^*}\to \underline{H^o}$ as
\begin{align*}
    \tilde{i}(f)= f\circ \underline{\pi}, \quad f \in \underline{ K^*}.
\end{align*}
Then the following diagram in $\mathcal{D}$
\begin{equation}
\xymatrix{
    \underline{H}\ar[r]^{\underline{\pi}} & \underline{K} \\
    \underline{H^o}\ar[u]^{\underline{\Phi_R}}& \underline{K^* }\ar[u]^{\underline{\Phi_{\overline{R}}}} \ar[l]_{\tilde{i}}
     }
\end{equation}
commutes.
It is clear that $\underline{\Phi_R}, \underline{\Phi_{\overline{R}}}, \tilde{i}$ are Hopf maps in $\mathcal{D}$.
By Proposition \ref{prop-hopf}, we see that $\underline{\pi}$ is a Hopf map in $\mathcal{D}$.
By the assumption that $(K,\overline{R})$ is a factorizable Hopf algebra, we see that $\underline{\Phi_{\overline{R}}}$ is an isomorphism.
We also have $\im(\underline{\Phi_R} \circ \tilde{i})$ is isomorphic to $\underline{K}$ as Hopf algebra in $\mathcal{D}$.
By Proposition \ref{lem-bra},  we have $\underline{H}\cong \underline{H^{co\pi}}\#_{\sigma}\underline{K}$ as $\underline{K}$-comodule algebras in $\mathcal{D}$ for some $\sigma\in \emph{\text{Reg}}(\underline{K}\otimes \underline{K},  \underline{H^{co\pi}})$.
Moreover, due to the proof of Proposition \ref{lem-bra}, the map $\gamma=\underline{\Phi_R} \circ \tilde{i}\circ \underline{\Phi_{\overline{R}}}^{-1}$ is right $\underline{K}$-comodule map in $Reg(\underline{K},\underline{H})$. Now, we can use Lemma \ref{lem-bijective} to obtain that the following map is bijective:
 \begin{equation*}
      \begin{matrix}
        \varphi':& \underline{H^{co\pi}}\otimes \im(\gamma) &\rightarrow & \underline{H} \\
         \;&u\otimes v & \mapsto & uv
      \end{matrix}.
       \end{equation*}
% where $\underline{H}^{co\pi}=\{h\in \underline{H}|\;(\Id\otimes \underline{\pi})\circ \underline{\Delta}(h)=h\otimes 1\}$.

By Theorem \ref{thm-bi}, we only need to show that there exist normal left coideal subalgebras $L_1,L_2$ such that the following map is bijective and they satisfy $(\pi_1\otimes \pi_2)(R_{21}R)=(1\otimes 1)$, where $\pi_1$(resp. $\pi_2$) denote the associated natural quotient map with $H/HL_1^+$(resp. $H/HL_2^+$)
      \begin{equation*}
      \begin{matrix}
        \varphi:& L_1\otimes L_2 &\rightarrow & H \\
         \;&u\otimes v & \mapsto & uv
      \end{matrix}
       \end{equation*}
Let $L_1=H^{co\pi}$ and let $L_2=\Phi_R(K^*)$ (See Equation \eqref{eq-quo2} for the definition).
We obtain that $L_1,L_2$ are normal left coideal subalgebras of $(H, R)$. To show $\varphi$ is bijective, we only need to show that $\varphi=\varphi'$ as linear maps via identifying $\underline{H}$ and $H$ as vector spaces.
By Lemma \ref{lem-copp}, we get $\underline{H^{co\pi}}=H^{co\pi}$ as vector spaces. Since the definition of $\gamma$, we know $\im(\gamma)=\Phi_R(K^*)$ as vector spaces. Thus, the map $\varphi$ is just $\varphi'$ as linear maps, which implies that $\varphi$ is bijective. Since $L_2=\Phi_R(K^*)$ and the definition of $\pi_1$, we have $\pi_1(\Phi_R(f))=\epsilon(f)1$ for any $f\in K^*$, this implies $(\pi_1\otimes \pi_2)(R_{21}R)=(1\otimes 1)$. Hence by Theorem \ref{thm-bi}, $(H, R)$ is a twisted tensor product of $(K, \pi)$ and $(K', \pi')$ for some quotient Hopf algebra $(K', \pi')$ of $(H, R)$.
\end{proof}

% \begin{remark}
% \emph{If we consider following case in this theorem, where $H=D(K)$ for a factorizable Hopf algebra $(K,R)$ and  here $D(K)$ is the Drinfeld double of $K$ (see \cite{MS}), then we can obtain \cite[Theorem 4.3]{SchH}, which is given by H.J. Schneider.}
% \end{remark}
Recall the definition of Drinfeld double of a finite dimensional Hopf algebra $H$ denoted by $D(H)$ (see \cite{MS}). As a coalgebra, $D(H)=(H^*)^{cop}\otimes H$. The multiplication of $D(H)$ is defined as
$$(f\otimes h)(g\otimes k)=f [h_{(1)}\rightharpoonup g\leftharpoonup S^{-1}(h_{(3)})]\otimes h_{(2)}k,$$
where $f,g\in H^*, \;h,k\in H$ and $\langle a\rightharpoonup g\leftharpoonup b,c\rangle:=\langle g,bca\rangle$ for $a,b,c\in H$. Then, we have
\begin{corollary}\cite[Theorem 4.3]{SchH}
Suppose $H=D(K)$ for a factorizable Hopf algebra $(K,R)$.
Then $D(K)\cong (K\otimes K)^J$ as quasitriangular Hopf algebras, where the twist $J$ is given by
$$J=\sum_i(1\otimes R^i)\otimes (R_i\otimes 1).$$
\end{corollary}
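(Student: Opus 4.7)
The strategy is to deduce the corollary by applying Theorem \ref{thm-m1} to the Drinfeld double $H = D(K)$ with a canonical surjective quotient $\pi: D(K) \to K$, and then to match the resulting abstract twist against the explicit formula from Theorem \ref{thm-bi}.

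I would first define $\pi: D(K) \to K$ by $\pi(f \otimes h) = l_R(f) \cdot h$, i.e., $\pi|_K = \id_K$ and $\pi|_{(K^*)^{cop}} = l_R$, where $l_R(f) = (f \otimes \id)(R)$. The universal property of $D(K)$ makes this a well-defined surjective Hopf algebra map, since the quasitriangular axioms on $R$ supply the needed compatibility $h \cdot l_R(g) = l_R(h_{(1)} \rightharpoonup g \leftharpoonup S^{-1}(h_{(3)})) \cdot h_{(2)}$, and $\pi$ restricts to the identity on $K$. Using the canonical $R$-matrix $\mathcal{R} = \sum_\alpha (\epsilon \otimes e_\alpha) \otimes (e^\alpha \otimes 1)$ of $D(K)$ together with the dual basis identity $\sum_\alpha e^\alpha(x) e_\alpha = x$, a direct computation gives
$$(\pi \otimes \pi)(\mathcal{R}) = \sum_\alpha e_\alpha \otimes l_R(e^\alpha) = \sum_i R_i \otimes R^i = R,$$
so the induced structure on the quotient is precisely the original factorizable $(K, R)$. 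Theorem \ref{thm-m1} then produces a further quotient $(K', \pi')$ of $D(K)$ and a twist $J$ realizing $D(K) \cong (K \otimes K')^J$ as quasitriangular Hopf algebras.

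Next I would identify $K'$ with $K$ and reduce $J$ to the stated form. From the proof of Theorem \ref{thm-m1}, $K' = D(K)/D(K) L^+$ for $L = \Phi_{\mathcal{R}}(\tilde{i}(K^*))$ with $\tilde{i}(f) = f \circ \pi$. A computation of $\mathcal{R}_{21}\mathcal{R}$ in $D(K)$, using the bijectivity of $l_R$ from factorizability, identifies $L$ with the subalgebra $\epsilon \otimes K \subset D(K)$, so $K' = D(K)/D(K)(\epsilon \otimes K)^+ \cong (K^*)^{cop}$, which in turn is isomorphic to $K$ as a Hopf algebra via $l_R$. Substituting $\mathcal{R}_\alpha = \epsilon \otimes e_\alpha$ and $\mathcal{R}^\alpha = e^\alpha \otimes 1$ into the formula $J = \sum_\alpha (1 \otimes \pi'(S_{D(K)}(\mathcal{R}_\alpha))) \otimes (\pi(\mathcal{R}^\alpha) \otimes 1)$ from Theorem \ref{thm-bi}, and reindexing via the dual basis identity together with the definition of $l_R$, produces the stated $J = \sum_i (1 \otimes R^i) \otimes (R_i \otimes 1)$.

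The main obstacle is the identification of $K'$ with $K$: verifying that the abstract quotient $D(K)/D(K) L^+$ recovers the original Hopf algebra $K$ requires careful computation of $\Phi_{\mathcal{R}}$ on $\tilde{i}(K^*) \subset D(K)^o$, leveraging the canonical form of the Drinfeld-double $R$-matrix together with the factorizability of $K$ to collapse this subspace to $\epsilon \otimes K$. Once this identification is in place, matching the explicit twist is a routine re-indexing using the defining property of $l_R$ and the expansion of $R$.
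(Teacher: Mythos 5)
Your first half is correct and is essentially the paper's own argument: build a Hopf surjection $\pi\colon D(K)\to K$ from the quasitriangular structure, check the induced $R$-matrix on the quotient is factorizable, and invoke Theorems \ref{thm-m1} and \ref{thm-bi}. The paper takes $\pi(f\otimes k)=S(r_R(f))k$, which induces $(R_{21})^{-1}$ on $K$, while your choice $\pi(f\otimes h)=l_R(f)h$ induces $R$ itself; your variant is legitimate and in fact cleaner on this point, since factorizability of the induced quotient structure is then literally the hypothesis, with no need to know that $(K,(R_{21})^{-1})$ is factorizable whenever $(K,R)$ is.

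The second half, however, contains two genuine errors. First, you identify $(K^*)^{cop}$ with $K$ using ``the bijectivity of $l_R$ from factorizability'': factorizability is bijectivity of the Drinfeld map $f\mapsto (f\otimes\id)(R_{21}R)$, not of $l_R$; bijectivity of $l_R$ is the paper's separate \emph{full rank} condition, and the two genuinely differ (e.g.\ $D(G)$ with its standard $\mathcal{R}$ is factorizable, yet $l_{\mathcal{R}}$ has image $\Bbbk^{G}\otimes 1$, of dimension $|G|$, far below $\dim D(G)$). Second, the identification $L=\Phi_{\mathcal{R}}(\tilde{i}(K^*))=\epsilon\otimes K$ is false. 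Structurally it cannot hold in general: by Remark \ref{rk-nomco}, $L$ is always a \emph{normal} left coideal subalgebra of $D(K)$, whereas $1\otimes K$ need not be normal in $D(K)$, so $D(K)/D(K)(1\otimes K)^{+}$ need not even be a Hopf algebra. And even in the simplest case where normality is automatic it fails: for $K=\Bbbk\mathbb{Z}_3=\Bbbk\langle g\rangle$ with the factorizable $R=\sum_{j,k}\omega^{jk}e_j\otimes e_k$ (idempotents $e_j$, $\omega$ a primitive cube root of unity, so $l_R(\delta_{g^a})=e_a$), a direct computation gives $\Phi_{\mathcal{R}}(f\circ\pi)=\sum_{\beta}f(e_\beta)\,\chi_\beta\otimes g^{\beta}$, where $\chi_\beta(g)=\omega^{\beta}$; hence $L=\mathrm{span}\{\chi_\beta\otimes g^{\beta}\}$ is a \emph{diagonal} copy of $\Bbbk\mathbb{Z}_3$ inside $D(K)\cong\Bbbk(\widehat{\mathbb{Z}}_3\times\mathbb{Z}_3)$, not $\epsilon\otimes K$. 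The correct identification of the complementary factor $K'=D(K)/D(K)L^{+}$ with $K$ goes through the \emph{other} canonical projection $\pi_-(f\otimes h)=S(r_R(f))h$ (exactly the map the paper uses as its $\pi$): one checks $\pi_-$ annihilates $D(K)L^{+}$ and compares dimensions to get $\ker\pi_-=D(K)L^{+}$, whence $K'\cong K$, and only then does Theorem \ref{thm-bi} return the stated twist $J$. As written, your route to $K'\cong K$ and to the formula for $J$ does not go through.
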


\begin{proof}
Denote the standard universal $R$-matix of $D(K)$ as $\mathcal{R}$. Let $\pi:H\rightarrow K$ be the surjective Hopf map given by
\begin{align*}
\pi(f\otimes k)=S(r_R(f))k, \quad  f\in K^*, k\in K.
\end{align*}
Then $(K,(\pi\otimes \pi)(\mathcal{R}))$ is factorizable. Thus, from Theorem \ref{thm-m1} and Theorem \ref{thm-bi}, we know this corollary is true.
\end{proof}

We propose a conjecture of the categorical version of Theorem \ref{thm-m1} in the following.
\begin{conjecture}
Suppose $\mathcal{D}$ is a braided tensor category and $\mathcal{C}$ is a topologizing non-degenerated braided tensor subcategory of $\mathcal{D}$. Then
\begin{align*}
    \mathcal{D}\cong \mathcal{C}\boxtimes C_{\mathcal{D}}(\mathcal{C}).
\end{align*}
as braided tensor category.
\end{conjecture}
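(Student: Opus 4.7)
The plan is to adapt the proof strategy of Theorem \ref{thm-m1} to an arbitrary braided tensor category by replacing the Hopf algebras $H$ and $K$ with their internal categorical avatars. Under suitable completeness hypotheses on $\mathcal{D}$, Tannakian reconstruction produces a Hopf algebra object $\mathbf{K}\in \mathcal{D}$ whose category of comodules recovers $\mathcal{C}$ (in the finite setting this is the coend $\int^{X\in\mathcal{C}} X^\vee\otimes X$), together with a Hopf algebra object $\mathbf{H}\in \mathcal{D}$ for $\mathcal{D}$ itself and a surjective Hopf morphism $\pi\colon \mathbf{H}\twoheadrightarrow \mathbf{K}$. Non-degeneracy of $\mathcal{C}$ then translates into factorizability of $\mathbf{K}$, i.e.\ into the internal analogue $\underline{\Phi_{\overline R}}$ being an isomorphism, which is the precise categorical counterpart of the hypothesis of Theorem \ref{thm-m1}.

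Given this dictionary, I would run the cleft-extension argument of Theorem \ref{thm-m1} inside $\mathcal{D}$. Set $\mathbf{L}_1 := \mathbf{H}^{\mathrm{co}\,\pi}$ and $\mathbf{L}_2 := \underline{\Phi_R}(\tilde{i}(\mathbf{K}^\vee))$, regarded as normal left coideal subobjects of $\mathbf{H}$. The internal version of Proposition \ref{lem-bra} exhibits $\mathbf{H}$ as an $\mathbf{K}$-cleft comodule algebra in $\mathcal{D}$ via the cleaving map $\gamma = \underline{\Phi_R}\circ \tilde i \circ \underline{\Phi_{\overline R}}^{-1}$, and Lemma \ref{lem-bijective} then produces a bijection $\mathbf{L}_1\otimes \mathbf{L}_2 \xrightarrow{\sim} \mathbf{H}$; this is the exact factorization required by Theorem \ref{thm-bi}. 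The centrality condition $(\pi_1\otimes \pi_2)(R_{21}R) = 1\otimes 1$ becomes the categorical statement that the braiding is trivial on pairs coming from $\mathcal{C}$ and from $C_\mathcal{D}(\mathcal{C})$, which holds by definition of the relative centralizer (compare Lemma \ref{lem-mugcen}). Extracting the twist $\mathbf{J}$ exactly as in Theorem \ref{thm-bi} should then yield a braided equivalence $\mathcal{D}\simeq \mathcal{C}\boxtimes C_\mathcal{D}(\mathcal{C})$.

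The principal obstacle is the reconstruction step: Tannakian reconstruction for arbitrary (possibly non-finite, non-semisimple) braided tensor categories is not available in the same clean form used for finite tensor categories, and the coend defining $\mathbf{K}$ need not exist without further cocompleteness assumptions. Even granting a suitable enlargement of $\mathcal{D}$ (for instance an ind-completion or a locally presentable enhancement), one must pin down the correct meaning of the Deligne product $\boxtimes$ outside the finite abelian setting, and verify that the cleft factorization obtained above genuinely descends to a $\boxtimes$-decomposition rather than merely a monoidal one. Once the centralizer condition on the braiding is built in, this last compatibility is plausible but delicate, and is where a genuinely new argument beyond the Hopf-algebraic proof of Theorem \ref{thm-m1} is likely to be required.
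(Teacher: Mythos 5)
You are attempting to prove something the paper itself does not prove: this statement is explicitly labelled a \emph{conjecture}, and the only evidence the paper offers is the remark that the special case $\mathcal{D}=\Rep(H)$, $\mathcal{C}=\Rep(K)$, with $(K,\pi)$ a factorizable quotient of a quasitriangular $(H,R)$, follows from Theorem \ref{thm-m1}. So there is no proof in the paper to compare against, and any argument that actually closed the conjecture would be new mathematics. Your proposal does not close it, and to your credit you say so in the final paragraph; but it is worth being precise about why the gaps you name are fatal rather than technical.

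The central problem is that every tool in your dictionary presupposes structure the conjecture deliberately omits. First, the paper's remark states that $\mathcal{D}$ is not even assumed to have duals, so the coend $\int^{X\in\mathcal{C}} X^\vee\otimes X$ you use to reconstruct $\mathbf{K}$ is not defined; even when duals exist, such coends exist only under finiteness or cocompleteness hypotheses the conjecture does not grant. Second, ``non-degenerate'' has no agreed meaning in this generality: the equivalence of trivial M\"{u}ger center, factorizability of the coend, and the other standard characterizations is a theorem of Shimizu \emph{in the finite case}, so your translation of non-degeneracy into ``$\underline{\Phi_{\overline R}}$ is an isomorphism'' presupposes the reconstruction that fails in the first step. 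Third, the target of the equivalence, $\mathcal{C}\boxtimes C_{\mathcal{D}}(\mathcal{C})$, uses a Deligne product that is only known to exist (and only has the required universal property) for locally finite abelian categories; outside that setting the conjecture's statement itself needs interpretation before any proof can begin. Finally, the paper's own machinery (Theorem \ref{thm-bi}, Lemma \ref{lem-bijective}, Proposition \ref{lem-bra}) is not purely diagrammatic: it works with elements, underlying vector spaces, and the identification $\underline{H^{co\pi}}=H^{co\pi}$ of Lemma \ref{lem-copp}, all of which live in $\Rep(H)$ over $\Bbbk$ and have no counterpart inside an abstract braided tensor category. So your plan is a faithful description of \emph{why the paper believes} the conjecture — it is the categorical shadow of Theorem \ref{thm-m1} — but as a proof it has a genuine gap at every one of the three steps (reconstruction, non-degeneracy transfer, and descent to a $\boxtimes$-decomposition), not merely at the last one.
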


\begin{remark}
\emph{Here, we don't ask that the braided tensor category $\mathcal{D}$ has left dual or right dual. Let $(H,R)$ be a quasitriangular Hopf algebra with a factorizable quotient Hopf algebra $(K,\pi)$. When $\mathcal{D}=\Rep(H)$ and $\mathcal{C}=\Rep(K)$, we see this conjecture holds by Theorem \ref{thm-m1}.} %See \cite[Propositions 2.1.14 and 7.4.3]{MSh}.
\end{remark}

For the full rank quasitriangular Hopf algebra, we also have splitting property for quasitriangular Hopf algebras.
\begin{theorem}\label{thm-m2}
Suppose $(H,R)$ is a quasitriangular Hopf algebra and $(K, \pi)$ is a quotient Hopf algebra of $(H, R)$.
Suppose $H^{co\pi}=\prescript{co\pi}{}{H}$ and $(K,(\pi\otimes \pi)(R))$ is full rank quasitriangular Hopf algebra.
Then there exists a quotient Hopf algebra $(K', \pi')$ of $(H, R)$ such that $(H, R)$ is a twisted tensor product of $(K, \pi)$ and $(K', \pi')$.
\end{theorem}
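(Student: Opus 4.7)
The plan is to mimic the proof of Theorem \ref{thm-m1} inside the braided monoidal category $\mathcal{D}=\Rep(H)$, using the transmuted Hopf algebra $\underline{H}$, the Hopf projection $\underline{\pi}\colon\underline{H}\to\underline{K}$ supplied by Proposition \ref{prop-hopf}, and Majid's transmuted map $\underline{\Phi_R}\colon\underline{H^o}\to\underline{H}$. The target is to construct a Hopf-algebra section $\gamma\colon\underline{K}\to\underline{H}$ of $\underline{\pi}$ in $\mathcal{D}$; once this is in hand, Proposition \ref{lem-bra} produces a cleft $\underline{K}$-comodule-algebra structure on $\underline{H}$ over $\underline{H^{co\pi}}$, Lemma \ref{lem-bijective} produces a bijective multiplication $\underline{H^{co\pi}}\otimes\im(\gamma)\to\underline{H}$, and Theorem \ref{thm-bi}, together with Lemma \ref{lem-copp}, delivers the sought twisted-tensor-product decomposition with a new quasitriangular quotient $(K',\pi')$.

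The substantive difference from Theorem \ref{thm-m1} is the construction of $\gamma$. There, factorizability gave an outright inverse for $\underline{\Phi_{\overline{R}}}$; here, full rank of $(K,\overline{R})$ only provides a surjection $l_{\overline{R}}\colon (K^o)^{cop}\to K$. I would combine this surjectivity with the hypothesis $H^{co\pi}=\prescript{co\pi}{}{H}$, which promotes $A:=H^{co\pi}$ to a genuine normal Hopf subalgebra of $H$ so that $H$ sits in a Hopf extension $A\hookrightarrow H\twoheadrightarrow K$. I would take $L_2$ to be a suitable Hopf subalgebra of $H$ built from the image of $l_R\circ\pi^\ast\colon(K^o)^{cop}\to H$ (or the slightly larger $\Phi_R\circ\pi^\ast$); the identity $\pi\circ l_R\circ\pi^\ast=l_{\overline{R}}$ and full rank together yield $\pi(L_2)=K$. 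The essential new step is to argue that the coinvariant symmetry forces $\pi|_{L_2}\colon L_2\to K$ to be injective and hence a Hopf isomorphism, whose inverse, transported into $\mathcal{D}$, furnishes $\gamma$.

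With $\gamma$ produced, the remaining work mirrors Theorem \ref{thm-m1}: set $L_1:=H^{co\pi}$ and verify that $L_1,L_2$ are normal left coideal subalgebras of $H$ (the normality of $L_2$ being the appropriate $l_R$-analogue of Remark \ref{rk-nomco}); Lemma \ref{lem-bijective} and Lemma \ref{lem-copp} promote the cleft bijection in $\mathcal{D}$ to an exact factorization $H=L_1L_2$ in the usual category; and a direct computation modulo $HL_2^+$ shows that $(\pi_1\otimes\pi_2)(R_{21}R)=1\otimes 1$. Theorem \ref{thm-bi} then produces the quotient $(K',\pi')$ and the claimed twisted-tensor-product isomorphism $(H,R)\cong((K\otimes K')^J,R^J_{K\otimes K'})$ of quasitriangular Hopf algebras.

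The main obstacle is the construction of the Hopf section $\gamma$ in $\mathcal{D}$—equivalently, the injectivity of $\pi|_{L_2}$. Under factorizability this was automatic from invertibility of $\underline{\Phi_{\overline{R}}}$; under merely full rank, $\underline{\Phi_{\overline{R}}}$ (and more relevantly $l_{\overline{R}}$) can have nontrivial kernel, and there is no a-priori reason that $l_R\circ\pi^\ast$ drops to precisely the same kernel. The hypothesis $H^{co\pi}=\prescript{co\pi}{}{H}$ must enter decisively here, since only once $A=H^{co\pi}$ is a normal Hopf subalgebra do we acquire the two-sided description $\ker\pi=HA^+=A^+H$; this should enable an $R$-matrix argument (exploiting $\Delta^{\text{op}}(h)=R\Delta(h)R^{-1}$ to push the second leg of $R$ across $A^+$) that forces any extra kernel into $A$ and then identifies it with zero. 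Making this reduction precise, and simultaneously verifying that the implicit cleft cocycle trivializes, is where I expect the technical heart of the proof to concentrate.
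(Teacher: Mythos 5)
Your overall scaffolding (take $L_1=H^{co\pi}$, $L_2=\im(l_R\circ\pi^\ast)$, run Proposition \ref{lem-bra} and Lemma \ref{lem-bijective} to get the exact factorization, then invoke Theorem \ref{thm-bi}) matches the paper, but the place where you locate the difficulty is wrong, and the step you dismiss as routine is where the actual content lies. First, the section $\gamma$ is \emph{free} and needs neither transmutation nor the hypothesis $H^{co\pi}=\prescript{co\pi}{}{H}$: since $\overline{R}=(\pi\otimes\pi)(R)$ is a finite sum, $\im(l_{\overline{R}})$ lies in the span of the right legs of $\overline{R}$, so surjectivity of $l_{\overline{R}}$ forces $K$ to be finite dimensional and hence $l_{\overline{R}}\colon (K^*)^{cop}\to K$ is \emph{bijective}; then $\gamma:=l_R\circ i\circ l_{\overline{R}}^{-1}$ is an ordinary Hopf section of $\pi$ (the diagram $\pi\circ l_R\circ i=l_{\overline{R}}$ commutes), and $\gamma\circ\pi|_{L_2}=\id$ gives injectivity of $\pi|_{L_2}$ automatically. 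Your worry about a ``nontrivial kernel'' of $l_{\overline{R}}$ is vacuous, and your plan to route through $\Rep(H)$ with $\underline{\Phi_R},\underline{\Phi_{\overline{R}}}$ is misdirected: full rank controls $l_R$, not $\Phi_R$ (there exist full rank \emph{triangular} Hopf algebras, for which $\Phi_R$ has rank one), and the whole point of this case is that $l_R$ is an ordinary Hopf map, so the cleft-extension argument runs in $\mathbf{vect}_\Bbbk$ with no transmutation and no need for Lemma \ref{lem-copp}.

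The genuine gap is in what you wave off. Normality of $L_2=l_R(K^*)$ is \emph{not} ``the appropriate $l_R$-analogue of Remark \ref{rk-nomco}'': that remark concerns $\Phi_R$, and images of $l_R$ are Hopf subalgebras that are in general not normal (e.g.\ $K^*\otimes 1\subset D(K)$ for the standard $R$-matrix of a Drinfeld double). Likewise $(\pi_1\otimes\pi_2)(R_{21}R)=1\otimes 1$ is not a ``direct computation modulo $HL_2^+$.'' This is exactly where $H^{co\pi}=\prescript{co\pi}{}{H}$ enters in the paper: using the intertwining identity $(g_{(1)}\rightharpoonup h)\,l_R(g_{(2)})=l_R(g_{(1)})\,(h\leftharpoonup g_{(2)})$, the two coinvariance conditions collapse the evaluation terms and yield
\begin{equation*}
a_{(1)}\,l_R(f\circ\pi)\,S(a_{(2)})=\epsilon(a)\,l_R(f\circ\pi),\qquad a\in H^{co\pi},\ f\in K^*,
\end{equation*}
which (together with $H=L_1L_2$ and the fact that $L_2$ is a Hopf subalgebra) proves normality of $L_2$ \emph{and} shows $L_1$ and $L_2$ commute elementwise, so that $H\cong L_1\otimes L_2$ as Hopf algebras; only then does decomposing $R$ along the tensor factors give $(\pi_1\otimes\pi_2)(R_{21}R)=1\otimes 1$, as required by Theorem \ref{thm-bi}. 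Your alternative suggestion -- promote $A=H^{co\pi}$ to a normal Hopf subalgebra and push legs of $R$ across $A^+$ using $\ker\pi=HA^+=A^+H$ -- presumes the two-sided kernel description, which is not available without faithful-flatness-type input and is never needed in the paper's argument. As written, your proposal would not close either of these steps.
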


\begin{proof}
Similar to the proof of Theorem \ref{thm-m1}, we only need to show that there exist normal left coideal subalgebras $L_1,L_2$ such that the following map is bijective
      \begin{equation*}
      \begin{matrix}
        \varphi:& L_1\otimes L_2 &\rightarrow & H \\
         \;&u\otimes v & \mapsto & uv
      \end{matrix}
       \end{equation*}
Let $L_1=H^{co\pi}$ and let $L_2=l_R(K^*)$, where $l_R(K^*)=\{l_R(f\circ \pi)|\;f\in K^*\}$. We first show the map $\varphi$ is bijective. Let $\overline{R}=(\pi\otimes \pi)(R)$. Considering following diagram
\begin{equation}
\xymatrix{
    H\ar[r]^{\pi} & K \\
    (H^o)^{cop}\ar[u]^{l_R}& (K^*)^{cop} \ar[u]^{l_{\overline{R}}} \ar[l]_{i}
     }
\end{equation}
where $i(f)=f\circ \pi$ for $f\in K^*$. Since all the maps in this diagram are Hopf maps, we see that the above diagram is commutative.  By the assumption that $(K,\overline{R})$ is full rank quasitriangular Hopf algebra, we know that $l_{\overline{R}}$ is bijective. Thus, the conditions of Proposition \ref{lem-bra} hold. Thus, $H\cong H^{co\pi}\#_{\sigma} K$ as $K$-comodule algebras for some $\sigma\in \text{Reg}(K\otimes K, H^{co\pi})$. In this case, the map $\gamma$ in $Reg(K,H)$ can be given by $l_R\circ i\circ l_{\overline{R}}^{-1}$ by the proof of Proposition \ref{lem-bra}. Thus, $\im(\gamma)=l_R(K^*)$. Now, we can apply Lemma \ref{lem-bijective} to deduce that the map $\varphi$ is bijective.

Note that $L_1$ is left normal coideal subalgebra.
We only have to show $L_2$ is also a left normal coideal subalgebra and $(\pi_1\otimes \pi_2)(R_{21}R)=(1\otimes 1)$, where $\pi_1$(resp. $\pi_2$) denote the associated natural quotient map with $H/HL_1^+$(resp. $H/HL_2^+$).
Recall that $L_2=l_R(K^*)$ is Hopf subalgebra of $H$, we have to prove that $L_2$ is normal.
Since $\varphi$ is bijective, we get $H=L_1L_2$.
Next, we show $(ab)_{(1)}cS((ab)_{(2)})\in L_2$ for all $a\in L_1$ and $b,c\in L_2$.
Since $L_2$ is Hopf subalgebra, we obtain that $b_{(1)}cS(b_{(2)})\in L_2$.
Thus, we can assume $b_{(1)}cS(b_{(2)})=l_R(f\circ \pi)$ for some $f\in K^*$.
Since
\begin{align*}
(g_{(1)}\rightharpoonup h)l_R(g_{(2)})=l_R(g_{(1)})(h\leftharpoonup g_{(2)})
\end{align*}
for all $g\in H^o, h\in H$, we know $hl_R(g)=l_R(g_{(2)})(g_{(1)}\circ S^{-1}\rightharpoonup h\leftharpoonup g_{(3)})$ for all $g\in H^o, h\in H$.
Using this, we have
\begin{align*}
a_{(1)}l_R(f\circ \pi)S(a_{(2)})&=[a_{(1)}l_R(f\circ \pi)]S(a_{(2)})\\
                &=[l_R(f_{(2)}\circ \pi) (f_{(1)}\circ \pi\circ S^{-1}\rightharpoonup a_{(1)}\leftharpoonup f_{(3)}\circ \pi)]S(a_{(2)})\\
                &=\langle f_{(1)}\circ S_K^{-1}, \pi(a_{(3)})\rangle \langle f_{(3)}, \pi(a_{(1)})\rangle l_R(f_{(2)}\circ \pi) a_{(2)}S( a_{(4)})\\
                &=\langle f_{(1)}\circ S_K^{-1}, 1_K)\rangle \langle f_{(3)}, 1_K)\rangle l_R(f_{(2)}\circ \pi) a_{(1)}S( a_{(2)}),
\end{align*}
where the last equation follows from $H^{co\pi}=\prescript{co\pi}{}{H}$.
Then we see that
\begin{align}
a_{(1)}l_R(f\circ \pi)S(a_{(2)})=l_R(f\circ \pi)\epsilon(a). \label{commu}
\end{align}
Thus, we have shown that $L_2$ is normal. Further, one can see that $H^{co\pi}$  commutes with $l_R(K^*)$ due to above equality \eqref{commu}. Thus, $H=H^{co\pi}.l_R(K^*)$ has actually tensor product structure as Hopf algebras. From this, we can assume that $H=A\otimes B$ as Hopf algebras and
$$\pi_1=(\Id_A \otimes \epsilon_B), \;\pi_2=(\epsilon_A\otimes \Id_B),$$
where $A=H^{co\pi}$ and $B=l_R(K^*)$. From this, we can assume $R=\sum_{i,j}(a_i\otimes b_j)\otimes (a^i\otimes b^j)$, where $a_i,a^i\in A$ and $b_i,b^i\in B$. Then, we have:
\begin{align*}
(\pi_1\otimes \pi_2)(R_{21}R)&=(\pi_1\otimes \pi_2)[\sum_{k,l}(a^k\otimes b^l)\otimes (a_k\otimes b_l)][\sum_{i,j}(a_i\otimes b_j)\otimes (a^i\otimes b^j)]\\
&=\epsilon_A(a_k)\epsilon_B(b_j)\epsilon_A(a^i)\epsilon_B(b^l)(a^k a_i\otimes b_lb^j)\\
&=\epsilon_A(a^i)\epsilon_B(b^l)(a_i\otimes b_l)\\
&=1\otimes 1,
\end{align*}
where the last equality follows from $\sum_{i,l}(a_i\otimes b_l)\otimes (\epsilon_A(a^i)\otimes \epsilon_B(b^l))=1\otimes 1$ and the penultimate equality follows from an analogous equation. Thus, we have $(\pi_1\otimes \pi_2)(R_{21}R)=(1\otimes 1)$.

Note that $H^{co\pi }\cong H/HL_2^+$.
Combining this with Theorem \ref{thm-bi}, we obtain the desired results.
\end{proof}
%Ð»Ð»ÄãµÄ¹Ø×¢£¬ÔÚÕâ¸ö¶¨ÀíµÄµÚ¶þ²¿·Ö¡°There exist normal left coideal subalgebras...¡± ÖÐÐèÒª¼ÓÉÏ£º£¨pi_2\otimes  £©

In the end of this section, we shall construct an example to illustrate that there exists the case that Theorem \ref{thm-m1} can't work while Theorem \ref{thm-m2} can apply.

Suppose the characteristic of the ground field $\Bbbk$ is not $2$.
We recall that the Sweedler Hopf algebra $H_4$ .
As an algebra, $H_4$ is generated by $a$ and $x$ subject to the relations
\begin{align*}
a^2 = 1, \quad x^2 = 0, \quad xa = -ax.
\end{align*}
The coalgebra structure of $H_4$ is determined by
\begin{align*}
\Delta(a)=a\otimes a, \quad \Delta(x)=x\otimes a+ 1\otimes x.
\end{align*}
By the fact that all the universal $R$-matrices of $H_4$ are determined, see \cite{Ge-Taft} or \cite[Section 12.2]{R} for example. we know that $H_4$ is not factorzable, i.e. there is no univeral $R$-matix of $H_4$ such that it gives factorizable Hopf algebra. Let $H=H_4\otimes \Bbbk \mathbb{Z}_2$. By dimension argument for $H$, we know that there is no non-trivial factoizable quotient of $H$, i.e. Theorem \ref{thm-m1} can't apply to know if $H$ has any non-trivial twisted tensor product decomposition. Since all the universal $R$-matrices of $H_4$ are determined, we obtain that there exists a univeral $R$-matix $R$ of $H_4$ such that $(H_4,R)$ is full rank. Hence the natural quotient map $\pi:H\rightarrow H_4$ given by $\pi(b\otimes c)=b \epsilon(c)$ satifies the conditions of Theorem \ref{thm-m2}. Thus, we can apply Theorem \ref{thm-m2} to obtain twisted tensor products of $H_4$ and $\mathbb{Z}_2$.

\section{Extensions of Hopf Algebras}
% We shall apply the main result to the extension of Hopf algebras.
% Recall that the extension of Hopf algebras, including abelian extensions and central extensions related to small quantum groups. Then, we recall a S. Natale's result given in \cite[Theorem 1.2]{Nar}, which will be considered in next section. Lastly, we review a technical lemma, which will be used to give intrinsic proof of Theorem \ref{thm-ker}.

We shall recall the definition of the extension of Hopf algebras,
which have been studied by many authors (see \cite{M3} for example).
\begin{definition}[Extension of Hopf Algebras]\label{def2.1.1}
Suppose
\begin{equation}\label{ext}
\;\; A\xrightarrow{\iota} H \xrightarrow{\pi} K
\end{equation}
is a sequence of Hopf algebras and the maps are Hopf  maps.
We say $H$ is an extension of $A$ by $K$ if
\begin{itemize}
  \item[(i)] $\iota$ is injective;
  \item[(ii)]  $\pi$ is surjective;
 \item[(iii)] $A=\{h \in H|\;(\pi\otimes \Id)\circ \Delta (h)=1\otimes h\}=H^{co \pi}$;
  \item[(iv)] $\ker(\pi)= HA^+$, $A^+$ is the kernel of the counit of $A$.
\end{itemize}
where $A$ is viewed as a Hopf subalgebra of $H$ via the map $\iota$.
An extension $H$ of $A$ by $K$ is abelian if $A$ is commutative and $K$ is cocommutative.
\end{definition}

For an exact sequence \eqref{ext}, we say that $A$ is a normal Hopf
subalgebra of $H$, i.e. $h_{(1)}kS(h_{(2)})\in K$ for all $k\in K,h\in H$.
% Conversely, if $K$ is a normal Hopf subalgebra of a Hopf algebra
% $H$, then the quotient coalgebra $\overline{H}=H/HK^+=H/K^+H$ is a quotient Hopf algebra and $H$ fits into an extension \eqref{ext}, where $\iota$ and $\pi$ are the canonical maps.

% If a simple Hopf algebra $H$ is an extension of $K$ by $\overline{H}$, then $\dim(H)=\dim(K)\dim(\overline{H})$ (see \cite{Sch} for example).

When the extension $H$ is abelian, $H$ is finite dimensional and $\Bbbk$ is algebraically closed of characteristic zero, we see that there exist finite groups $G$, $F$ such that
\begin{equation}\label{eq-abel}
\;\; \Bbbk^G\xrightarrow{\iota} H \xrightarrow{\pi} \Bbbk F,
\end{equation}
Note that the above abelian extensions were classified by Masuoka
(see \cite[Proposition 1.5]{M3}).

\begin{theorem}\label{pro-ext}
Suppose $(H,R)$ is a quasitriangular Hopf algebra satisfying the following extension:
\begin{equation}\label{apply-ext}
A\xrightarrow{\iota} H \xrightarrow{\pi} K,
\end{equation}
If $(K,(\pi\otimes \pi)(R))$ is factorizable or full rank, then the Hopf algebra $A$ admits a quasitriangular structure $R_A$ and there exists a twist $J$ for $K\otimes A$ such that $(H, R)$ is a twisted tensor product of $(K, (\pi\otimes \pi)(R))$ and $(A, R_A)$.
% $$(H,R)\cong ((K\otimes A)^J,(R_K\otimes R_A)^J) \text{ as quasitriangular Hopf algebras},$$
% where $R_K=(\pi\otimes \pi)(R)$.
\end{theorem}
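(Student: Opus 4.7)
The plan is to reduce the theorem to the main results of the previous section, Theorems \ref{thm-m1} and \ref{thm-m2}, and then identify the auxiliary quotient Hopf algebra they produce with the Hopf subalgebra $A$ itself.

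First I would verify that the setting of the extension supplies the hypotheses needed by the two preceding theorems. By condition (iii) of Definition \ref{def2.1.1} we have $A=H^{co\pi}$. Since $A$ is moreover a Hopf subalgebra of $H$ (and in particular stable under the antipode), a standard computation using $\pi|_A=\epsilon|_A$ gives $\prescript{co\pi}{}{H}=S(H^{co\pi})=S(A)=A$, so that $H^{co\pi}=\prescript{co\pi}{}{H}=A$. Thus if $(K,(\pi\otimes\pi)(R))$ is factorizable we may invoke Theorem \ref{thm-m1} directly, and if it is full rank we may invoke Theorem \ref{thm-m2} using the just-established equality $H^{co\pi}=\prescript{co\pi}{}{H}$.

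Second, either invocation yields a quotient Hopf algebra $(K',\pi')$ of $(H,R)$ such that $(H,R)$ is a twisted tensor product of $(K,(\pi\otimes\pi)(R))$ and $(K',(\pi'\otimes\pi')(R))$. Tracing through the proofs and Theorem \ref{thm-bi}, the normal left coideal subalgebras producing the factorization are $L_1=H^{co\pi}=A$ and $L_2$ equal to $\Phi_R(K^*)$ in the factorizable case (see Equation \eqref{eq-quo2}) or $l_R(K^*)$ in the full rank case, with $K'=H/HL_2^+$ and $\pi'$ the canonical projection.

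The final step is to show that the restriction $\pi'|_A\colon A\to K'$ is an isomorphism of Hopf algebras. Because $\pi'$ is a Hopf algebra map and $A$ is a genuine Hopf subalgebra of $H$, the restriction $\pi'|_A$ is automatically a Hopf algebra map; its bijectivity is exactly the assertion in the proof of Theorem \ref{thm-bi} that $\pi_2|_{L_1}$ is a linear bijection, with $L_1=A$ in our situation. Transporting the quasitriangular structure $(\pi'\otimes\pi')(R)$ on $K'$ along this Hopf isomorphism defines the desired $R$-matrix $R_A$ on $A$, and the twist $J\in K\otimes K'$ provided by Theorem \ref{thm-bi} corresponds under this isomorphism to a twist in $K\otimes A$ that exhibits $(H,R)$ as the twisted tensor product of $(K,(\pi\otimes\pi)(R))$ and $(A,R_A)$. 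The only nontrivial technical point is the identification $\pi'|_A\colon A\xrightarrow{\sim} K'$ as Hopf algebras; once this is in place, both the quasitriangular structure on $A$ and the twisted tensor product decomposition follow formally from Theorems \ref{thm-m1}, \ref{thm-m2}, and \ref{thm-bi}.
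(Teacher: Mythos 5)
Your proof is correct and takes essentially the same approach as the paper: reduce to Theorems \ref{thm-m1} and \ref{thm-m2} using $A=H^{co\pi}=\prescript{co\pi}{}{H}$ (note this equality is already built into condition (iii) of Definition \ref{def2.1.1}, so your antipode computation is superfluous), and then identify the auxiliary quotient $K'$ with $A$ itself. The paper leaves that identification and the transport of the $R$-matrix implicit (``use the proof of Theorem \ref{thm-m1}/\ref{thm-m2}''), whereas you spell it out via the Hopf isomorphism $\pi'|_A\colon A\xrightarrow{\ \sim\ }K'$ coming from the bijectivity of $\pi_2|_{L_1}$ in Theorem \ref{thm-bi}; this is a legitimate filling-in of detail, not a different method.
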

\begin{proof}
Assume $(K,(\pi\otimes \pi)(R))$ is factorizable. Note that $A=H^{co\pi}$ is a Hopf subalgebra in this case, hence we can use the proof of Theorem \ref{thm-m1} to obtain what we want.

Similarly, if $(K,(\pi\otimes \pi)(R))$ is full rank, then we can use the fact that $H^{co\pi}=\prescript{co\pi}{}{H}=A$ to obtain that the quotient map $\pi$ satisfies the conditions of Theorem \ref{thm-m2}, now we can use the proof of Theorem \ref{thm-m2} to get we want.
\end{proof}

\begin{proposition}
Suppose $(H,R)$ is a quasitriangular Hopf algebra and an abelian extension of a Hopf algebra $A$ by a Hopf algebra $K$.
Suppose that $\Bbbk$ is algebraically closed of characteristic
$0$ and $(K,(\pi\otimes \pi)(R))$ is factorizable or full rank.
Then, there exists an abelian group $G$ and abelian Lie algebra $L$ such that
\begin{align*}
   H\cong U(L)\otimes \Bbbk G
\end{align*}
as Hopf algebras, where $U(L)$ is universal enveloping algebra of $L$.
Furthermore, if $H$ is finite dimensional, then $H$ is a finite abelian group algebra and the extension is a direct product of groups.
\end{proposition}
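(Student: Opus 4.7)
The plan is to apply Theorem \ref{pro-ext} to realize $H$ as a twisted tensor product $(K \otimes A)^J$ and then pull structure through the Cartier-Kostant-Milnor-Moore (CKMM) theorem, showing successively that $A$, $K$, and $H$ are all bicommutative.

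First, Theorem \ref{pro-ext} gives a quasitriangular structure $R_A$ on $A$ and a twist $J$ with $(H, R) \cong ((K \otimes A)^J, \widetilde{R}^J)$ as quasitriangular Hopf algebras, so $H \cong K \otimes A$ as algebras. Since $A$ is commutative and $R_A \in A \otimes A$, the identity $R_A \Delta(a) R_A^{-1} = \Delta^{op}(a)$ collapses in the commutative algebra $A \otimes A$ to $\Delta = \Delta^{op}$, so $A$ is also cocommutative. I would then establish commutativity of $K$: in the full-rank case, the surjective Hopf map $l_R : (K^o)^{cop} \to K$ has commutative domain ($K^o$ is commutative since $K$ is cocommutative), forcing $K$ commutative; in the factorizable case, $K$ is finite-dimensional cocommutative so $K \cong \Bbbk \Gamma$ by CKMM, and I would show $\Gamma$ is abelian by analyzing the Drinfeld iso $\Phi_{R_K}$ via its braided refinement $\underline{\Phi_{R_K}} : \underline{K^*} \to \underline{K}$ (where cocommutativity of $K$ simplifies the braided multiplication on $\underline{K^*}$ and transports the commutativity of $K^* = \Bbbk^\Gamma$ back to $K$). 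I expect this factorizable subcase to be the main obstacle, likely requiring a separate lemma or an appeal to known structural results for factorizable cocommutative Hopf algebras.

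With $A$ and $K$ bicommutative, CKMM gives $A \cong U(V) \otimes \Bbbk \Gamma_A$ and $K \cong U(W) \otimes \Bbbk \Gamma_K$ for abelian Lie algebras $V, W$ and abelian groups $\Gamma_A, \Gamma_K$. Then $K \otimes A$ is bicommutative and $(K \otimes A)^{\otimes 2}$ is commutative, so $J_{21}^{-1} J$ is central there. Using $\tau \circ \Delta^J = J_{21} \Delta^{op} J_{21}^{-1} = J_{21} \Delta J_{21}^{-1}$ (from cocommutativity of $K \otimes A$), centrality forces $\tau \circ \Delta^J = \Delta^J$, making $H$ cocommutative. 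Combined with commutativity of $H \cong K \otimes A$, a second application of CKMM yields $H \cong U(L) \otimes \Bbbk G$ with $L = P(H)$ an abelian Lie algebra and $G = G(H)$ an abelian group.

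For the finite-dimensional case, $\dim H < \infty$ forces $L = 0$, so $H \cong \Bbbk G$ with $G$ finite abelian. Since $J$ commutes with $g \otimes g$ for every $g \in G(K \otimes A)$ in the commutative algebra $(K \otimes A)^{\otimes 2}$, the twist preserves group-likes, giving $G(K \otimes A) \subseteq G(H)$; cardinalities $|\Gamma_K \times \Gamma_A| = \dim H = |G|$ force equality $G = \Gamma_K \times \Gamma_A$ as groups. In the notation of display \eqref{eq-abel}, cocommutativity of $A = \Bbbk^{G_A}$ forces $G_A$ abelian with the Fourier identification $A \cong \Bbbk \widehat{G_A}$, and $K = \Bbbk F$ with $F$ abelian; the resulting short exact sequence $1 \to \widehat{G_A} \to F \times \widehat{G_A} \to F \to 1$ exhibits the extension as a direct product of groups.
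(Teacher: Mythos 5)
Your proposal follows the paper's own route: invoke Theorem \ref{pro-ext} to write $(H,R)\cong\bigl((K\otimes A)^J,\widetilde R^J\bigr)$, deduce cocommutativity of $A$ from ``commutative $+$ quasitriangular $\Rightarrow$ cocommutative'', reduce $K$ to the group algebra of an abelian group, kill the twist using commutativity, and finish with Cartier--Kostant--Milnor--Moore. Your full-rank argument ($l_{R_K}$ is a surjective Hopf map whose source $(K^o)^{cop}$ is commutative because $K$ is cocommutative, hence $K$ is commutative) is correct, and is exactly what the paper's terse sentence about $K$ requires in that case. Your final detour is correct but unnecessary: since $(K\otimes A)\otimes(K\otimes A)$ is a commutative algebra, $\Delta^J=J\Delta J^{-1}=\Delta$ outright, so $H\cong K\otimes A$ as Hopf algebras in one step (this is the paper's argument); and for ``the extension is a direct product'' one should note that the isomorphism produced by Theorem \ref{thm-bi} is $(\pi_1\otimes\pi_2)\circ\Delta$ with $\pi_1=\pi$, so it automatically intertwines $\pi$ with the projection onto $K$ and carries $A=H^{co\pi}$ onto the second tensor factor.

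The genuine gap is the one you flagged yourself and did not close: the factorizable subcase. You need the statement that a finite-dimensional cocommutative Hopf algebra $\Bbbk F$ carrying a factorizable $R$-matrix has $F$ abelian, and your proposal offers neither a proof nor a precise citation. This is not a routine point. Factorizable does not imply full rank, so your full-rank argument cannot be recycled: for the Drinfeld double $D(F)$, which is factorizable, the image of $l_{\mathcal{R}}$ is only one tensor factor, of dimension $\dim D(F)^{1/2}$. Nor does cocommutative $+$ quasitriangular imply commutative (take $R=1\otimes1$ on any $\Bbbk F$), so factorizability must enter in an essential way. A correct argument needs real input, for instance: the hexagon axioms force any $R$-matrix on $\Bbbk F$ into the form $R=\sum_{g\in\Gamma_r}g\otimes u_g$, where $\Gamma_r,\Gamma_l\trianglelefteq F$ are abelian, $\{u_g\}_{g\in\Gamma_r}$ are the primitive idempotents of $\Bbbk\Gamma_l$, and $g\mapsto\chi_g$ is a conjugation-equivariant isomorphism $\Gamma_r\cong\widehat{\Gamma_l}$; since $\operatorname{im}\Phi_R\subseteq r_R(K^*)\,l_R(K^*)=\Bbbk[\Gamma_r\Gamma_l]$, factorizability forces $F=\Gamma_r\Gamma_l$, hence $F'\subseteq D:=\Gamma_r\cap\Gamma_l\subseteq Z(F)$; equivariance then makes the pairings between $D$ and $F'$ trivial, while surjectivity of $\Phi_R$ forces a map $D\to\widehat D$ built from these pairings to be surjective, which is impossible unless $F'=1$. (Alternatively, one can appeal to the theory of non-degenerate braided fusion categories.) To be fair, the paper itself compresses this point into the single clause ``combining this with other assumptions about $K$,'' so your difficulty mirrors a real compression in the paper; but as a standalone proof, your proposal is incomplete at exactly the step you identified, and that step is the only substantive one separating the factorizable case from the full-rank case.
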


\begin{proof}
By Theorem \ref{pro-ext}, $A$ has quasitiriangular structure. Since $A$ is also commutative, $A$ is cocommutative.
Since $\Bbbk$ is algebraically closed of characteristic $0$, we can apply the Cartier-Kostant-Milnor-Moore theorem (see \cite[Section 5]{MS}) to obtain $A\cong U(L)\# \Bbbk G$ for some Lie algebra $L$ and some group $G_0$.
Since $A$ is commutative, we obtain that $L$ is abelian Lie algebra and $G_0$ is abelian group.
Moreover, we have $A\cong U(L)\otimes \Bbbk G_0$.
Since $K$ is factorizable or full rank, $K$ is finite dimensional. Combing this with other assumptions about $K$, we obtain that $K\cong \Bbbk F$ for some finite abelian group $F$.
By Theorem \ref{pro-ext} again, we know that $H\cong [U(L)\otimes \Bbbk (G_0\times F)]^J$ for some twist $J$. Due to commutativity of $U(L)\otimes \Bbbk (G_0\times F)$, we get $H\cong U(L)\otimes \Bbbk (G_0\times F)$. Let $G=G_0\times F$. Then we have $H\cong U(L)\otimes \Bbbk G$ as what we want. From this, we see that if $H$ is finite dimensional, then $H\cong \Bbbk G$ and the extension is a direct product of groups. Thus, we complete the proof.
\end{proof}

\section{Criterion for Quasitriangular Hopf Algebras}

In this section, we will provide a criterion for finite dimensional quasitriangular Hopf algebras.

 Let $f\in H^*$ and let $h\in H$, define the actions as follows:
$$f\rightharpoonup h:=h_{(1)}f(h_{(2)}),\;h\leftharpoonup f:=f(h_{(1)})h_{(2)}.$$

\begin{lemma}\cite[Proposition 12.2.11]{R}\label{lem-comm}
Suppose that $(H, R)$ is a finite dimensional quasitriangular Hopf algebra. Then, for all $h\in H$ and $f\in H^*$, we have
$$(f_{(1)}\rightharpoonup h)l_R(f_{(2)}) = l_R(f_{(1)})(h\leftharpoonup f_{(2)}).$$
\end{lemma}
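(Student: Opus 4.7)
The plan is to reduce the claimed identity to a direct consequence of the quasitriangularity axiom $\Delta^{op}(h)R = R\Delta(h)$ by expanding both sides in Sweedler notation and applying $f \otimes \id$ to this relation.

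First I would unpack the left-hand side. Writing $R = \sum_i R_i \otimes R^i$, the definitions give
\begin{equation*}
(f_{(1)} \rightharpoonup h)\, l_R(f_{(2)}) = \sum_i f_{(1)}(h_{(2)})\, f_{(2)}(R_i)\, h_{(1)} R^i.
\end{equation*}
Using the standard identity $f_{(1)}(a)f_{(2)}(b) = f(ab)$ for the convolution product on $H^*$, this simplifies to $\sum_i f(h_{(2)} R_i)\, h_{(1)} R^i$. Similarly, the right-hand side expands as
\begin{equation*}
l_R(f_{(1)})\,(h \leftharpoonup f_{(2)}) = \sum_i f_{(1)}(R_i)\, f_{(2)}(h_{(1)})\, R^i h_{(2)} = \sum_i f(R_i h_{(1)})\, R^i h_{(2)}.
\end{equation*}

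Next I would observe that equality of these two expressions is precisely what one gets by applying $f \otimes \id_H$ to the quasitriangularity relation
\begin{equation*}
\Delta^{op}(h)\, R = R\, \Delta(h),
\end{equation*}
which unpacked reads $\sum_i h_{(2)} R_i \otimes h_{(1)} R^i = \sum_i R_i h_{(1)} \otimes R^i h_{(2)}$. Applying $f \otimes \id_H$ to both sides yields exactly the identity of the two expansions obtained above, completing the proof.

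There is no real obstacle; the whole argument is essentially a bookkeeping exercise in Sweedler notation. The only point requiring mild care is to keep track of which tensor factor $R$ lives in on each side of the quasitriangularity axiom and to pair the Sweedler indices of $f$ with the correct factors so that the convolution identity $f_{(1)}(a)f_{(2)}(b) = f(ab)$ can be applied cleanly. Notably, the finite dimensionality assumption is not actually used in the argument itself; it only ensures that $H^* = H^o$ so that $l_R$ is defined on all of $H^*$.
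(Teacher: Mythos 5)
Your proof is correct: expanding both sides via the paper's conventions for $\rightharpoonup$, $\leftharpoonup$, and $l_R$, and applying $f\otimes \id$ to the quasitriangularity axiom $\Delta^{op}(h)R = R\Delta(h)$ yields exactly the claimed identity. The paper itself gives no proof (it cites Radford's Proposition 12.2.11), and your computation is precisely the standard argument behind that citation, including the accurate observation that finite dimensionality serves only to make $H^*=H^o$ so that the Sweedler notation $f_{(1)}\otimes f_{(2)}$ is meaningful for every $f\in H^*$.
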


%Now, we give following criterion for a finite dimensional quasitriangular Hopf algebra.
% Since our goal, we agree that $\Bbbk$ is algebraic closed of characteristic zero.

\begin{proposition}\label{pro-prop}
Suppose $(H, R)$ is a quasitriangular Hopf algebra.
Then $H$ must satisfies one of following conditions:
\begin{itemize}
 \item[(i)] $|G(H^o)|=1$;
 \item[(ii)] $|G(H^o)|\neq 1$ and $G(H^o)\cap Z(H^o)\neq\{1\}$;
 \item[(iii)] $|G(H^o)|\neq 1$ and $G(H^o)\cap Z(H^o)= \{1\}$ and $Z(G(H))\neq\{1\}$ ;
\end{itemize}
 where $Z(H)$ (resp. $Z(H^o)$) is the center of $H$ (resp. $H^o$).
\end{proposition}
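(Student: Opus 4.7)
The plan is to prove the trichotomy by showing that if the first two cases fail (so $|G(H^o)|\neq 1$ and $G(H^o)\cap Z(H^o)=\{1\}$), then the third case must hold, i.e.\ $Z(G(H))\neq\{1\}$. The key idea is that every grouplike $\alpha\in G(H^o)$ produces, via $l_R$, a grouplike in $H$ that lies in the center of $G(H)$; and conversely, triviality of $l_R(\alpha)$ forces $\alpha$ to be central in $H^o$.

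Concretely, I would proceed in the following steps. First, for any $\alpha\in G(H^o)$, observe that $l_R(\alpha)\in G(H)$ since $l_R:(H^o)^{cop}\to H$ is a Hopf map and hence preserves grouplikes. Second, apply Lemma \ref{lem-comm} with $f=\alpha$; since $\Delta(\alpha)=\alpha\otimes\alpha$, the identity reduces to
\begin{equation*}
(\alpha\rightharpoonup h)\,l_R(\alpha)=l_R(\alpha)\,(h\leftharpoonup \alpha),\qquad h\in H.
\end{equation*}
Specializing to $h=g\in G(H)$ gives $\alpha\rightharpoonup g=\alpha(g)g=g\leftharpoonup \alpha$, and since $\alpha(g)\in\Bbbk^{\times}$ is an invertible scalar, this yields $g\,l_R(\alpha)=l_R(\alpha)\,g$. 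Hence $l_R(\alpha)\in Z(G(H))$.

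Third, I would establish the implication $l_R(\alpha)=1 \Rightarrow \alpha\in Z(H^o)$. Indeed, if $l_R(\alpha)=1$ then the displayed identity collapses to $\alpha\rightharpoonup h=h\leftharpoonup \alpha$ for every $h\in H$, and for any $f\in H^o$ one computes
\begin{equation*}
(f\alpha)(h)=f(h_{(1)})\alpha(h_{(2)})=f(\alpha\rightharpoonup h)=f(h\leftharpoonup \alpha)=\alpha(h_{(1)})f(h_{(2)})=(\alpha f)(h),
\end{equation*}
so $\alpha$ is central in $H^o$. Fourth, combine the pieces: under the hypotheses of case (iii) we have some $\alpha\in G(H^o)$ with $\alpha\neq 1$, and $\alpha\notin Z(H^o)$ by assumption, so $l_R(\alpha)\neq 1$ by the contrapositive of Step~3. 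Since $l_R(\alpha)\in Z(G(H))$ by Step~2, this produces a nontrivial element of $Z(G(H))$, completing the trichotomy.

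The only mildly delicate point is keeping track of the opposite coproduct on $(H^o)^{cop}$ and verifying that grouplikes are preserved and that the scalar $\alpha(g)$ appearing in $\alpha\rightharpoonup g$ is nonzero (which follows because $g$ is invertible in $G(H)$ and $\alpha$ is an algebra map). No finite-dimensionality is actually needed for the argument, only the Hopf-algebraic identity in Lemma \ref{lem-comm}.
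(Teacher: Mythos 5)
Your proof is correct and follows essentially the same route as the paper's: both apply Lemma \ref{lem-comm} with $f=\alpha$ grouplike, conclude that $l_R(\alpha)=1$ forces $\alpha\in G(H^o)\cap Z(H^o)$, and that otherwise $l_R(\alpha)$ is a nontrivial element of $Z(G(H))$. The only differences are organizational (you argue by contrapositive rather than the paper's case split on $l_R(\alpha)$) and that you spell out the specialization $h=g\in G(H)$ and the cancellation of the nonzero scalar $\alpha(g)$, a step the paper leaves implicit.
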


\begin{proof}
Suppose $|G(H^o)|\neq 1$. Let $1\neq \alpha\in G(H^o)$. If $l_R(\alpha)=1$, using Lemma \ref{lem-comm} by letting $f=\alpha$, we conclude that $\alpha\rightharpoonup h=h\leftharpoonup \alpha$ for all $h\in H$. From this, we know $\alpha\in G(H^o)\cap Z(H^o)$, i.e. (ii) holds. If $l_R(\alpha)\neq 1$, then $l_R(\alpha)\in G(H)$ and it belongs to the center of $G(H)$, which implies (iii).
\end{proof}

\begin{proposition}\label{pro-qt}
Suppose $(H,R)$ is a finite dimensional quasitriangular Hopf algebra. Then $H$ must satisfies one of following conditions:
\begin{itemize}
 \item[(i)] $|G(H^*)|=1$;
 \item[(ii)] $|G(H^*)|\neq 1$ and $G(H^*)\cap Z(H^*)\neq \{1\}$;
 \item[(iii)] $|G(H^*)|\neq 1$ and $G(H^*)\cap Z(H^*)=\{1\}$ and $G(H)\cap Z(H)\neq \{1\}$;
  \item[(iv)] $|G(H^*)|\neq 1$ and $G(H^*)\cap Z(H^*)=\{1\}$ and $G(H)\cap Z(H)= \{1\}$ and for any odd prime factor $p$ of $|G(H^*)|$, there exists $g\in G(H)$ with order $p$ and $\alpha \in G(H^*)$ with order $p$ such that $\alpha(g)= 1$; In particular, $p^2|\dim(H)$.
\end{itemize}
\end{proposition}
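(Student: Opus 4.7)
The plan is to assume that (i), (ii), and (iii) all fail---equivalently, $|G(H^*)|\neq 1$, $G(H^*)\cap Z(H^*)=\{1\}$, and $G(H)\cap Z(H)=\{1\}$---and show that (iv) must then hold. The key input is Lemma \ref{lem-comm} together with its $r_R$-companion, which is obtained by applying $\Id\otimes\alpha$ to $R\Delta(h)=\Delta^{op}(h)R$:
\begin{align*}
(\alpha\rightharpoonup h)\,l_R(\alpha)&=l_R(\alpha)\,(h\leftharpoonup\alpha),\\
r_R(\alpha)\,(\alpha\rightharpoonup h)&=(h\leftharpoonup\alpha)\,r_R(\alpha),
\end{align*}
for every $h\in H$ and every $\alpha\in G(H^*)$. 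Combined with $G(H^*)\cap Z(H^*)=\{1\}$, the argument used in the proof of Proposition \ref{pro-prop} shows that $l_R$ and $r_R$ are injective on $G(H^*)$; specializing $h$ to a grouplike of $H$ then shows that both images lie in $Z(G(H))$.

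Now fix an odd prime $p$ dividing $|G(H^*)|$ and choose, by Cauchy's theorem, $\alpha\in G(H^*)$ of order $p$. Set $g:=l_R(\alpha)$ and $g':=r_R(\alpha)$; by the previous step, both are non-identity elements of $Z(G(H))$ of order $p$. I claim $g\neq g'$. If instead $g=g'$, the two identities above yield
\[
g\,(h\leftharpoonup\alpha)\,g^{-1}=\alpha\rightharpoonup h=g^{-1}\,(h\leftharpoonup\alpha)\,g\qquad\text{for all }h\in H,
\]
so $g^{2}$ commutes with $h\leftharpoonup\alpha$ for every $h$. Since $\leftharpoonup\alpha$ is an algebra automorphism of $H$ (its inverse being $\leftharpoonup\alpha^{-1}$), we deduce $g^{2}\in Z(H)$; as $p$ is odd, $\langle g^{2}\rangle=\langle g\rangle$, and hence $g\in G(H)\cap Z(H)$, contradicting the failure of (iii). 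Therefore $g\neq g'$, and $\tilde g:=g(g')^{-1}\in Z(G(H))$ is a non-identity element of order $p$. A short calculation gives $\alpha(g)=\alpha(g')=(\alpha\otimes\alpha)(R)$, whence $\alpha(\tilde g)=1$, and $(\tilde g,\alpha)$ is the pair demanded by (iv).

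To obtain $p^{2}\mid\dim H$, dualize the Hopf subalgebra inclusion $\Bbbk\langle\alpha\rangle\hookrightarrow H^{*}$ into a surjective Hopf map $\pi\colon H\twoheadrightarrow \Bbbk^{C_{p}}$. Because $\Bbbk^{C_{p}}$ is commutative and cocommutative, $H^{co\pi}$ is a normal Hopf subalgebra of $H$ and $\dim H=p\cdot\dim H^{co\pi}$. The condition $\alpha(\tilde g)=1$ is exactly $\tilde g\in H^{co\pi}$, so $\Bbbk\langle\tilde g\rangle$ is a Hopf subalgebra of $H^{co\pi}$ of dimension $p$; Nichols-Zoeller then gives $p\mid\dim H^{co\pi}$, whence $p^{2}\mid\dim H$. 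The main obstacle is ruling out the case $g=g'$: it forces the simultaneous use of Lemma \ref{lem-comm} and its $r_R$-analogue, and the step from $g^{2}\in Z(H)$ to $g\in Z(H)$ is precisely where the odd-$p$ hypothesis is essential.
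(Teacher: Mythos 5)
Your existence argument for the order-$p$ pair with trivial pairing is correct, and it takes a genuinely different route from the paper's. The paper disposes of the case $(\alpha\otimes\alpha)(R)\neq 1$ by invoking Theorem \ref{thm-ker} (splitting along a factorizable quotient, which rests on the categorical Theorem \ref{thm-category}) to manufacture a nontrivial central grouplike, and then takes $g=l_R(\alpha)$, so that $\alpha(g)=(\alpha\otimes\alpha)(R)=1$. You avoid all of that machinery: your $r_R$-companion of Lemma \ref{lem-comm} is a correct consequence of $R\Delta(h)=\Delta^{op}(h)R$, and the step ``$g=g'$ forces $g^{2}\in Z(H)$, hence $g\in Z(H)$ for odd $p$'' is sound because $h\mapsto h\leftharpoonup\alpha$ is bijective. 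In fact your two identities give more than you extract: eliminating $\alpha\rightharpoonup h$ between them yields $r_R(\alpha)l_R(\alpha)\,(h\leftharpoonup\alpha)=(h\leftharpoonup\alpha)\,r_R(\alpha)l_R(\alpha)$ for all $h$, so $r_R(\alpha)l_R(\alpha)\in G(H)\cap Z(H)=\{1\}$, i.e.\ $g'=g^{-1}$. Hence $\alpha(g)^{2}=(\alpha\otimes\alpha)(R)^{2}=1$, and since $\alpha(g)$ is a $p$-th root of unity with $p$ odd, $\alpha(g)=1$: this recovers the paper's own witness $g=l_R(\alpha)$ (your $\tilde g$ is then just $g^{2}$) by a purely elementary computation, with no categorical input. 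For the main clause of (iv), your argument is a real simplification.

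There is, however, a genuine gap in your proof of $p^{2}\mid\dim H$. You claim that, because $\Bbbk^{C_p}$ is commutative and cocommutative, $H^{co\pi}$ is a normal Hopf subalgebra of $H$; this is false, and it is exactly what your appeal to Nichols--Zoeller requires. Cocommutativity of the target does not make $\pi$ conormal, and $H^{co\pi}$ need not be a subcoalgebra at all. Sweedler's algebra already shows this: for $H_4$ (generated by $a,x$ with $a^2=1$, $x^2=0$, $xa=-ax$) and the Hopf surjection $\pi:H_4\to\Bbbk\mathbb{Z}_2$, $a\mapsto\bar a$, $x\mapsto 0$, whose target is both commutative and cocommutative (and $H_4$ is even quasitriangular), one computes $H_4^{co\pi}=\mathrm{span}\{1,ax\}$, while $\Delta(ax)=ax\otimes 1+a\otimes ax$ does not lie in $H_4^{co\pi}\otimes H_4^{co\pi}$; moreover ${}^{co\pi}H_4=\mathrm{span}\{1,x\}\neq H_4^{co\pi}$. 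So $\Bbbk\langle\tilde g\rangle\subseteq H^{co\pi}$ is an inclusion of a Hopf subalgebra into a left coideal subalgebra, not into a Hopf algebra, and Nichols--Zoeller in the form you quote (freeness of a Hopf algebra over a Hopf subalgebra) does not apply.

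The divisibility you want is nevertheless true, and the repair is standard. $H^{co\pi}$ is a left coideal subalgebra, $\Delta(H^{co\pi})\subseteq H\otimes H^{co\pi}$, containing $B=\Bbbk\langle\tilde g\rangle$; so left multiplication by $B$ together with $\Delta|_{H^{co\pi}}$ makes $H^{co\pi}$ a finite-dimensional relative $(B,H)$-Hopf module (pass to $H^{cop}$ if you want to match left/right conventions). The relative Hopf-module form of the Nichols--Zoeller theorem --- the form in which the theorem is actually proved, see \cite{MS} or \cite{R} --- says every such module is free over $B$, whence $p\mid\dim H^{co\pi}$, and combined with $\dim H=p\cdot\dim H^{co\pi}$ (which is correct and needs no normality, only that $\pi$ is a surjection of finite-dimensional Hopf algebras) this gives $p^{2}\mid\dim H$. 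The paper itself waves this point through with ``it's not hard to see''; your instinct to supply a proof is right, but the argument must run through the coideal-subalgebra structure of $H^{co\pi}$, not through an alleged Hopf-subalgebra structure, which fails.
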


\begin{proof}

By Proposition \ref{pro-prop}, we only need to show that if $H$ don't satisfy (i)-(iii), then $H$ must satisfy (iv).
Suppose (i)-(iii) do not hold.
Let $p$ be any odd prime factor of $|G(H^*)|$.
Then we can find some $\alpha\in G(H^*)$ with order $p$.
Denote the Hopf subalgebra generated by $\alpha$ as $A$.
Then we have $A\cong \Bbbk \mathbb{Z}_p$ and there is a natural surjective Hopf map
\begin{align*}
\begin{matrix}
\pi_1:& H &\rightarrow & A^* \\
 \;&h & \mapsto & (f\mapsto f(h))
\end{matrix}
\end{align*}
Let $\langle \alpha, \alpha\rangle=(\alpha\otimes \alpha)(R)$.
If $\langle \alpha, \alpha\rangle\neq 1$, then $\langle \alpha, \alpha\rangle$ is primitive $p$th root of $1$.
Hence, $(A^*,(\pi_1\otimes \pi_1)(R))$ is a factorizable Hopf algebra.
Applying Theorem \ref{thm-ker}, we know that there exists a Hopf algebra $B$ and there is a twist $J$ for $(B\otimes A^*)$ such that
$$H \cong (B\otimes A^*)^J.$$
Note that $B\cong \Bbbk \mathbb{Z}_p$ is a commutative Hopf subalgebra of $(B\otimes A^*)^J$.
Thus, there is some $1\neq g\in G(H)\cap Z(H)$, which contradicts with the condition (iii).
Thus,  $\langle \alpha, \alpha\rangle=1$.
By the proof of Proposition \ref{pro-prop}, we know that $l_R(\alpha)\neq 1$.
Let $g=l_R(\alpha)$.
Then the condition (iv) holds.
In this case, it's not hard to see that $p^2|\dim(H)$.
\end{proof}

\begin{theorem}\label{pro-qtt}
Suppose $H$ is a finite dimensional Hopf algebra.
Suppose $|G(H^*)|\neq 1$, $G(H^*)\cap Z(H^*)=\{1\}$, $G(H)\cap Z(H)=\{1\}$, and there exists an odd prime factor $p$ of $|G(H^*)|$ such that for any $g\in G(H)$ with order $p$ and any $\alpha \in G(H^*)$ with order $p$, $\alpha(g)\neq 1$.
Then $H$ don't have any universal $R$-matrix.
\end{theorem}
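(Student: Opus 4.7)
The plan is to deduce Theorem \ref{pro-qtt} directly from Proposition \ref{pro-qt} by a contradiction argument, so almost all of the work is already packaged in the preceding dichotomy.

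First I would argue by contradiction: suppose that $H$ admits a universal $R$-matrix $R$, so that $(H,R)$ is a finite dimensional quasitriangular Hopf algebra. Then Proposition \ref{pro-qt} applies, and $(H,R)$ must satisfy one of the four mutually exclusive conditions (i)--(iv) listed there. The strategy is simply to show that each of these conditions is incompatible with the hypotheses of the theorem.

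Conditions (i), (ii), and (iii) are ruled out immediately by the blanket assumptions: (i) fails because $|G(H^*)|\neq 1$; (ii) fails because $G(H^*)\cap Z(H^*)=\{1\}$; and (iii) fails because $G(H)\cap Z(H)=\{1\}$. This leaves condition (iv), which demands that \emph{for every} odd prime factor $p$ of $|G(H^*)|$ there exist $g\in G(H)$ of order $p$ and $\alpha\in G(H^*)$ of order $p$ with $\alpha(g)=1$. But by hypothesis there is at least one odd prime factor $p$ of $|G(H^*)|$ for which \emph{no} such pair $(g,\alpha)$ exists; this is precisely the logical negation of (iv). Hence condition (iv) also fails, contradicting Proposition \ref{pro-qt}, and so no universal $R$-matrix on $H$ can exist.

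There is essentially no obstacle in this proof: the entire content has been absorbed into Proposition \ref{pro-qt}, whose proof in turn relies on the splitting results of Section~4 (Theorem \ref{thm-ker} or its infinite-dimensional refinement Theorem \ref{thm-m1}) applied to the quotient $\pi_1:H\to A^*$ with $A=\Bbbk\langle\alpha\rangle\cong\Bbbk\mathbb{Z}_p$. The only point one must be careful about is verifying that the negation of condition (iv) used here matches exactly the hypothesis of the theorem (including the quantifier structure: \emph{some} odd prime factor $p$ for which \emph{all} pairs $(g,\alpha)$ of order $p$ satisfy $\alpha(g)\neq 1$); once this is noted, the proof reduces to a one-line appeal to Proposition \ref{pro-qt}.
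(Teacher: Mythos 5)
Your proposal is correct and is essentially identical to the paper's own proof, which is also a direct appeal to Proposition \ref{pro-qt}; you have merely spelled out the case analysis (ruling out (i)--(iii) by the blanket hypotheses and noting that the theorem's hypothesis is exactly the negation of (iv)) that the paper leaves implicit.
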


\begin{proof}
By Proposition \ref{pro-qt}, we complete the proof of theorem.
\end{proof}

\begin{example}\label{pro-exsit1}
\emph{Let $p$ be a odd prime number. Considering the $p^2$ dimensional Taft algebra denoted by $H_{p^2}$, i.e. $H_{p^2}$ is generated by $\{a,x\}$ subject to the relations
$$a^p=1, \quad x^p = 0,\; \quad xa = \omega ax, \quad  \omega \text{ is a } p\text{th} \text{ primitive root of } 1.$$
The coalgebra structure of $H_{p^2}$ is determined by
$$\Delta(a)=a\otimes a, \quad \Delta(x)=x\otimes a+ 1\otimes x.$$
Let $\alpha\in G(H_{p^2}^*)$ be the element determined by $\alpha(a)=\omega$. Directly, we have $G(H_{p^2}^*)=\langle \alpha \rangle$ as groups. Combing this with $G(H_{p^2})=\langle a \rangle$, the conditions of Theorem \ref{pro-qtt} hold. Hence, $H_{p^2}$ admits no universal $R$-matrix, which is implied by \cite{Ge-Taft}.}
\end{example}

A Hopf algebra $H$ is \emph{simple} if the only normal Hopf subalgebras are $\Bbbk$ and itself.
If a quasitriangular Hopf algebra is simple, then we briefly call a simple quasitriangular Hopf algebra \cite{GA, Nar, Naq}.

\begin{corollary}\label{coro-simple}
Suppose $(H,R)$ is a simple quasitriangular Hopf algebra with finite dimension.
Then one of the following conditions hold:
\begin{itemize}
 \item[(i)] $|G(H^*)|=1$;
  \item[(ii)] $|G(H^*)|\neq 1$, and for any odd prime factor $p$ of $|G(H^*)|$, there exists $g\in G(H)$ with order $p$ and $\alpha \in G(H^*)$ with order $p$ such that $\alpha(g)= 1$; In particular, $p^2|\dim(H)$.
\end{itemize}
\end{corollary}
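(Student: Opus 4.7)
The plan is to apply Proposition \ref{pro-qt} directly and rule out the two intermediate cases using the simplicity hypothesis. Proposition \ref{pro-qt} already partitions finite dimensional quasitriangular Hopf algebras into four mutually exclusive cases (i)--(iv). Case (i) of Proposition \ref{pro-qt} is exactly case (i) of the present corollary, and case (iv) of Proposition \ref{pro-qt} is exactly case (ii) of the present corollary. So the entire task reduces to showing that cases (ii) and (iii) of Proposition \ref{pro-qt} cannot occur when $(H,R)$ is simple.

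First I would handle case (iii) of Proposition \ref{pro-qt}, which is the easier one. If $G(H)\cap Z(H)\neq \{1\}$, set $G_0 := G(H)\cap Z(H)$. Then $\Bbbk G_0$ is a group-like Hopf subalgebra of $H$ contained in $Z(H)$. For any $h\in H$ and $g\in G_0$, centrality of $g$ gives
\begin{align*}
h_{(1)} g S(h_{(2)}) = g\, h_{(1)} S(h_{(2)}) = \epsilon(h)\, g \in \Bbbk G_0,
\end{align*}
so $\Bbbk G_0$ is a nontrivial normal Hopf subalgebra of $H$. This contradicts simplicity, so case (iii) is excluded.

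Next I would rule out case (ii) of Proposition \ref{pro-qt} by dualizing the above argument. Apply the same reasoning inside $H^*$: a nontrivial element $\alpha\in G(H^*)\cap Z(H^*)$ generates a central, hence normal, Hopf subalgebra $\Bbbk\langle\alpha\rangle$ of $H^*$. The key fact I would invoke is the standard correspondence for finite dimensional Hopf algebras: a short exact sequence $K\hookrightarrow H\twoheadrightarrow Q$ with $K$ normal in $H$ dualizes to a short exact sequence $Q^*\hookrightarrow H^*\twoheadrightarrow K^*$ with $Q^*$ normal in $H^*$. Consequently $H$ is simple if and only if $H^*$ is simple. Thus the existence of a nontrivial normal Hopf subalgebra of $H^*$ contradicts simplicity of $H$, excluding case (ii).

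Having eliminated cases (ii) and (iii), Proposition \ref{pro-qt} forces $H$ to satisfy either its case (i) or its case (iv), which are precisely the two alternatives in the corollary. The main conceptual point — and really the only thing requiring a little care — is the duality statement that simplicity passes between $H$ and $H^*$; everything else is a direct computation with central group-like elements. No new arguments about the $R$-matrix are needed here, since all the quasitriangular input is already encoded in Proposition \ref{pro-qt}.
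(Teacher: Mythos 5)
Your proposal is correct and takes essentially the same route as the paper: the paper's proof consists of asserting that simplicity forces $G(H)\cap Z(H)=\{1\}$ and $G(H^*)\cap Z(H^*)=\{1\}$ and then citing Proposition \ref{pro-qt}, and your argument is exactly this reduction, merely supplying the justifications the paper leaves implicit (central group-likes generate central, hence normal, Hopf subalgebras; normality dualizes, so $H$ is simple if and only if $H^*$ is). The one caveat, shared equally with the paper, is that a central group-like yields a \emph{proper} normal Hopf subalgebra only when it does not generate the whole algebra, so both arguments tacitly exclude the borderline case $H\cong\Bbbk\mathbb{Z}_p$ (which is simple, quasitriangular, and has $G(H^*)\cap Z(H^*)\neq\{1\}$ when $\Bbbk$ contains $p$-th roots of unity); this is an issue with the statement itself rather than a defect of your proof relative to the paper's.
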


\begin{proof}
Since $H$ is simple Hopf algebra with finite dimension , we know that $G(H^*)\cap Z(H^*)=\{1\}$ and $G(H)\cap Z(H)= \{1\}$.
Now applying Proposition \ref{pro-qt}, we see the corollary is true.
\end{proof}

\begin{corollary}\label{rk-sim}
Suppose $H$ is a self-dual non-semisimple simple Hopf algebra with odd dimension.
If there exists an odd prime factor $p$ of $|G(H^*)|$ such that for any $g\in G(H)$ with order $p$ and any $\alpha \in G(H^*)$ with order $p$, $\alpha(g)\neq 1$, then $H$ admits no universal $R$-matrix.
\end{corollary}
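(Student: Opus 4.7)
The plan is to reduce directly to Theorem \ref{pro-qtt} by verifying its four hypotheses under our conditions. The hypothesis on the odd prime $p$ is exactly the one already given, so there are three remaining conditions to check: $|G(H^*)|\neq 1$, $G(H^*)\cap Z(H^*)=\{1\}$, and $G(H)\cap Z(H)=\{1\}$. The first is automatic, since the existence of the prime $p$ forces $|G(H^*)|\geq p>1$.

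For the two centrality conditions, I would argue as follows. Suppose $g\in G(H)\cap Z(H)$ with $g\neq 1$. Then the Hopf subalgebra $\Bbbk\langle g\rangle$ generated by $g$ is normal in $H$: centrality of $g$ yields, for every $h\in H$,
\begin{equation*}
h_{(1)}\, g\, S(h_{(2)}) \;=\; g\, h_{(1)} S(h_{(2)}) \;=\; g\,\epsilon(h) \;\in\; \Bbbk\langle g\rangle.
\end{equation*}
Simplicity of $H$ then forces $\Bbbk\langle g\rangle = H$, so that $H$ is a cyclic group algebra. This is ruled out by the remaining hypotheses: such an $H$ is either semisimple (when the characteristic of $\Bbbk$ does not divide the order of $g$) or, in the opposite case, a local algebra whose dual is the split semisimple algebra $\Bbbk^{|g|}$, and hence not isomorphic to $H$ as an algebra. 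The first possibility contradicts non-semisimplicity and the second contradicts self-duality. Therefore $G(H)\cap Z(H)=\{1\}$. Since self-duality provides a Hopf isomorphism $H\cong H^*$, the dual $H^*$ is itself simple, non-semisimple, self-dual and of the same odd dimension, so the identical argument applied to $H^*$ yields $G(H^*)\cap Z(H^*)=\{1\}$.

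With all four hypotheses of Theorem \ref{pro-qtt} now verified, we conclude at once that $H$ admits no universal $R$-matrix. The main subtlety is ruling out the degenerate possibility $\Bbbk\langle g\rangle = H$; handling this in a characteristic-free way (consistent with the paper's convention of working over an arbitrary field) is the one point that requires genuine care, but the combination of non-semisimplicity and self-duality closes it cleanly, as sketched above. Everything else is a direct bookkeeping exercise matching the hypotheses of the corollary to those of Theorem \ref{pro-qtt}.
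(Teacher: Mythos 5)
Your proof is correct, and while it rests on the same ultimate criterion as the paper (your Theorem \ref{pro-qtt}, which is Proposition \ref{pro-qt} in disguise), the route is genuinely different and in fact more careful. The paper's own proof spends all its effort establishing $|G(H^*)|\neq 1$: non-semisimplicity forces $\mathrm{Tr}(S^2)=0$, whereas a trivial distinguished group-like of $H^*$ would (via self-duality and Radford's $S^4$-formula) give $S^4=\Id$ and hence $\mathrm{Tr}(S^2)\neq 0$ in odd dimension; it then invokes Corollary \ref{coro-simple}, whose one-line proof simply \emph{asserts} that simplicity yields $G(H)\cap Z(H)=\{1\}$ and $G(H^*)\cap Z(H^*)=\{1\}$. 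You correctly note that $|G(H^*)|\neq 1$ is already forced by the hypothesis that the odd prime $p$ divides $|G(H^*)|$, so the paper's trace/Radford step is redundant, and you instead put the hypotheses of non-semisimplicity and self-duality to work exactly where the paper is thin: simplicity alone does \emph{not} give the centrality conditions, since a nontrivial central group-like generates a normal Hopf subalgebra which simplicity can only force to be all of $H$, leaving open the possibility $H\cong\Bbbk\langle g\rangle$. This degenerate case is not a formality --- $\Bbbk C_p$ \emph{is} simple in the paper's sense, and equipped with $R=1\otimes 1$ it even contradicts Corollary \ref{coro-simple} as literally stated --- so your exclusion of it, and the transfer of the argument to $H^*$ along the self-duality isomorphism, closes a gap that the paper's chain of citations leaves open. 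Two minor remarks: your word ``local'' is inaccurate when $|g|$ is not a power of $\mathrm{char}(\Bbbk)$ (e.g.\ $\Bbbk C_6$ in characteristic $3$), but nothing depends on it, since the contradiction is that $(\Bbbk\langle g\rangle)^*\cong\Bbbk^{|g|}$ is semisimple while $H$ is not; and your argument never uses oddness of $\dim H$ beyond finite-dimensionality, so you have in fact proved a slightly stronger statement than the corollary claims.
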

\begin{proof}
Since $H$ is non-semisimple Hopf algebra with odd dimension, we know $\text{Tr}|_{S^2}$ is zero.
This implies the distinguished group-like element of $H^*$ is not $1$ by Radford's fourth power formula, i.e. $|G(H^*)|\neq 1$.
Thus, we can apply Corollary \ref{coro-simple} to complete the proof.
\end{proof}

In the end of the section, we will apply Corollary \ref{coro-simple} to obtain a short proof of a result of Natale.
For readers' convenience, we state her result as follows:

\begin{theorem}\cite[Theorem 1.2]{Nar}\label{thm-squ}
Let $(H, R)$ be a finite dimensional quasitriangular Hopf algebra over an algebraically closed field of characteristic zero.
Assume that $\dim(H)$ is odd and square-free.
Then $H$ is semisimple.
\end{theorem}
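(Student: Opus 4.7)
The plan is induction on $\dim(H)$, using Corollary \ref{coro-simple} in the Hopf-simple case and the extension decomposition supplied by Theorem \ref{pro-ext} in the non-simple case. Assume the theorem holds for all quasitriangular Hopf algebras of smaller odd square-free dimension, and suppose toward a contradiction that $H$ is non-semisimple.

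First suppose $H$ is simple as a Hopf algebra. Then $G(H^*)\cap Z(H^*)=\{1\}$ and $G(H)\cap Z(H)=\{1\}$, since any nontrivial central group-like element would generate a proper nontrivial normal Hopf subalgebra, contradicting simplicity. On the other hand, because $\dim(H)$ is odd and $H$ is non-semisimple, $\mathrm{tr}(S^2)=0$, so Radford's $S^4$-formula forces the distinguished group-like of $H^*$ to be nontrivial; in particular $|G(H^*)|\neq 1$ (exactly the reasoning used in Corollary \ref{rk-sim}). Corollary \ref{coro-simple} then selects alternative (ii), which entails $p^2\mid \dim(H)$ for some odd prime $p$ dividing $|G(H^*)|$; this contradicts square-freeness.

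Next suppose $H$ admits a proper nontrivial normal Hopf subalgebra $A$, giving an extension $A\hookrightarrow H\twoheadrightarrow K$ with $K=H/HA^+$. Both $\dim(A)$ and $\dim(K)$ are proper divisors of $\dim(H)$, and since $\dim(H)$ is square-free these are coprime odd square-free numbers. The quotient inherits a quasitriangular structure $\overline{R}=(\pi\otimes\pi)(R)$, so by the inductive hypothesis $K$ is semisimple, and the Etingof--Gelaki theorem forces $K$ to be a cyclic group algebra. One would then choose $A$ so that $(K,\overline{R})$ is factorizable or full rank, at which point Theorem \ref{pro-ext} endows $A$ with a quasitriangular structure. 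The inductive hypothesis applied to $A$ yields $A$ semisimple, and since extensions of semisimple Hopf algebras in characteristic zero are themselves semisimple, $H$ must be semisimple, contradicting the hypothesis.

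The main obstacle will be ensuring that some extension $A\hookrightarrow H\twoheadrightarrow K$ yields a factorizable or full rank quotient quasitriangular structure $(K,\overline{R})$; the trivial case $\overline{R}=1\otimes 1$ satisfies neither condition. A natural strategy is to choose $A$ minimal among nontrivial normal Hopf subalgebras, so that the $R$-matrix must genuinely mix the two factors of the extension. Alternatively, if every normal quotient satisfies $(\pi\otimes\pi)(R)=1\otimes 1$, then $R$ is concentrated in the smallest nontrivial normal Hopf subalgebra, which together with the $G(H^*)$-analysis of the simple case should yield the required contradiction. Organising this reduction carefully is the delicate point of the argument.
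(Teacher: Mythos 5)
Your proposal has two genuine gaps, and the missing idea behind both is the paper's opening dichotomy on $H_l:=l_R(H^*)$, i.e.\ on whether $(H,R)$ is full rank. The first gap is in your simple case: you claim that $\mathrm{Tr}(S^2)=0$ together with Radford's $S^4$-formula forces $|G(H^*)|\neq 1$. It does not. The formula $S^4(h)=g(\alpha\rightharpoonup h\leftharpoonup \alpha^{-1})g^{-1}$ involves \emph{both} distinguished group-likes $g\in G(H)$ and $\alpha\in G(H^*)$, so the odd-dimension trace argument (eigenvalues $\pm 1$ of $S^2$ cannot sum to zero on an odd-dimensional space) only rules out the case where $g$ and $\alpha$ are \emph{both} trivial. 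If $\alpha=\epsilon$ but $g\neq 1$, you land in alternative (i) of Corollary \ref{coro-simple} and obtain no contradiction; this is exactly why Corollary \ref{rk-sim} carries a self-duality hypothesis that you do not have. The paper closes this hole by treating the simple case only after reducing to the situation where $l_R$ is bijective: then $H\cong (H^*)^{cop}$, so $|G(H)|=|G(H^*)|$, Corollary \ref{coro-simple} plus square-freeness forces $|G(H^*)|=1$, hence $|G(H)|=1$, hence $S^4=\Id$, and the trace argument then yields semisimplicity directly.

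The second gap is the one you acknowledge: in the non-simple case you cannot guarantee any extension $A\hookrightarrow H\twoheadrightarrow K$ with $(K,(\pi\otimes\pi)(R))$ factorizable or full rank, so Theorem \ref{pro-ext} is not applicable, and neither of your suggested remedies is carried out (nor is it clear they can be). The paper avoids this need entirely. When $\dim(H_l)<\dim(H)$, square-freeness produces a prime $p\mid \dim(H)$ with $p\nmid \dim(D(H_l))$; since the minimal quasitriangular Hopf subalgebra $H_R=l_R(H^*)\,r_R(H^*)$ is a quotient of $D(H_l)$, it is a \emph{proper} Hopf subalgebra, the induction hypothesis applies to $(H_R,R)$, and semisimplicity of $H_R$ yields that of $H$. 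When $l_R$ is bijective and $H$ is not simple, the quotient $K=H/HA^+$ is quasitriangular of smaller dimension, the self-duality $H\cong (H^*)^{cop}$ lets induction reach the other factor as well, and Blattner--Montgomery (an extension of semisimple Hopf algebras in characteristic zero is semisimple) finishes. In other words, the paper's proof uses its splitting machinery only through Corollary \ref{coro-simple} in the full-rank simple case, not through an extension decomposition of $H$ itself; without the $H_l$ dichotomy, both halves of your argument fail.
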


% \begin{remark}
% Combing this result with the fact that a semisimple Hopf algebra with dimension $pq$ over an algebraically closed field of characteristic zero (see \cite{PGs}),
% we see that quasitriangular Hopf algebras with dimension $pq$ over an algebraically closed field of characteristic zero are group algebras, as pointed in \cite{Naq}.
% \end{remark}

%Next, we present a short proof of Theorem \ref{thm-squ} as follows.
\begin{proof}[Proof of Theorem \ref{thm-squ}]
Let $n$ be the number of prime factors of $\dim(H)$.
We will proceed by induction on $n$.
For $n=1$, by \cite{ZY}, $H$ is a group algebra, and therefore $H$ is semisimple.

Now, assume $H$ is semisimple for $n\leq k$.
We need to prove that $H$ is also semisimple for $n=k+1$.
Let $H_l:=l_R(H^*)=\{l_R(f)|\;f\in H^*\}$.  If $\dim(H_l)\neq\dim(H)$.
Assume that $p$ is an odd prime factor of $\dim(H)$ such that $p\nmid \dim(H_l)$.
This implies that $p\nmid \dim(D(H_l))$.
Note that $H_R$ is a quotient Hopf algebra of $D(H_l)$.
Hence, $p\nmid \dim(H_R)$.
This implies that $H_R$ is a proper Hopf subalgebra of $H$, i.e. the number of prime factors of $\dim(H_R)$ is less than $k+1$.
By the inductive hypothesis, $H_R$ is semisimple. Combining this with the assumption that $(H,R)$ is quasitriangular with odd dimension, we obtain that $H$ is also semisimple.

If $\dim(H_l)=\dim(H)$, then $l_R$ is an isomorphism.
In this case, if $H$ is not simple Hopf algebra, we can apply induction to show that $H$ must be an extension of semisimple Hopf algebras, implying $H$ is semisimple (\cite{BM}).

The remaining case is when $H$ is a simple Hopf algebra. By Corollary \ref{coro-simple} and dimensional arguments, we know that $|G(H^*)|=1$. Since $H\cong (H^*)^{cop}$, we also have $|G(H)|=1$, which implies $S^4=\Id$. As $\dim(H)$ is odd, we know that $\text{Tr}(S^2)\neq 0$. Therefore, by \cite[Proposition 10.4.2]{R}, $H$ is also semisimple.
\end{proof}

\begin{remark}
In S. Natale's proof in \cite{Nar}, she firstly studied the conormality of the map $\pi:H\rightarrow H/HL^+$, where $\pi$ is the natural quotient map and $L=\Phi_R(H^*)$.
Under these conditions, along with the classification of triangular Hopf algebras from \cite{PGTC}, she proved Theorem \ref{thm-squ}.
However, our proof primarily relies on Corollary \ref{coro-simple}, which is a byproduct of Theorem \ref{thm-m1}.
\end{remark}

%$\;$
%\\
%\emph{Acknowledgements} This work was supported by the National Nature Science Foundation of China(NSFC) 11722016.


\begin{thebibliography}{99}
\bibitem{AA} Abella, A. (2018). Some advances about the existence of compact involutions in semisimple Hopf algebras. \textit{S\~{a}o Paulo J. Math. Sci}. DOI: 10.1007/s40863-018-0109-9.

\bibitem{M5} Masuoka, Akira. Extensions of Hopf algebras and Lie bialgebras. Transactions of the American Mathematical Society 352.8 (2000): 3837-3879.

\bibitem{M3} Masuoka, Akira. Hopf algebra extensions and cohomology. Math. Sci. Res. Inst. Publ 43 (2002): 167-209.

\bibitem{AAj} Alonso Alvarez, J. N., and J. M. Fern$\acute{\text{a}}$ndez Vilaboa. Cleft extensions in braided categories. Communications in Algebra 28.7 (2000): 3185-3196.

\bibitem{RA} Rosenberg, Alex. Noncommutative algebraic geometry and representations of quantized algebras. Vol. 330. Springer Science and Business Media, 2013.

\bibitem{CA} Chirvasitu, Alexandru, and Pawel Kasprzak. On the Hopf (co)center of a Hopf algebra. Journal of Algebra 464 (2016): 141-174.

\bibitem{DA} Davydov, Alexei, Pavel Etingof, and Dmitri Nikshych. Autoequivalences of tensor categories attached to quantum groups at roots of 1. Lie Groups, Geometry, and Representation Theory: A Tribute to the Life and Work of Bertram Kostant (2018): 109-136.


\bibitem{KA1} Klimyk, Anatoli, and Konrad Schm$\ddot{\text{u}}$dgen. Quantum groups and their representations. Springer Science and Business Media, 2012.

\bibitem{BA} Bakalov, Bojko, and Alexander A. Kirillov. Lectures on tensor categories and modular functors. Vol. 21. Providence, RI: American Mathematical Society, 2001.

\bibitem{GS1} Galindo, Cs$\acute{\text{e}}$ar, and Sonia Natale. Simple Hopf algebras and deformations of finite groups. Mathematical research letters 14.5 (2007): 943-954.

\bibitem{R} Radford, David E. Hopf algebras. Vol. 49. World Scientific, 2011.

\bibitem{YDT} Yukio, Doi, and Takeuchi Mitsuhiro. Cleft comodule algebras for a bialgebra. Communications in Algebra 14.5 (1986): 801-817.

\bibitem{Ro} Rowell, Eric C. From quantum groups to unitary modular tensor categories. Contemporary Mathematics 413 (2006): 215-230.

\bibitem{GA} Garc\'{i}a, Gast$\acute{\text{o}}$n Andr$\acute{\text{e}}$s. On Hopf algebras of dimension $p^3$. Tsukuba journal of mathematics 29.1 (2005): 259-284.

\bibitem{Ge-Taft} S. Gelaki, Topics on Quasitriangular Hopf Algebras, Master's Thesis, Ben Gurion University of the Negev, Israel, 1992.

\bibitem{LG-M} Lusztig, George. Modular representations and quantum groups. Contemp. Math 82.1080 (1989): 59-78.

\bibitem{LG-Q} Lusztig, George. Quantum groups at roots of 1. Geometriae Dedicata 35.1 (1990): 89-113.

\bibitem{LG} Lusztig, George. Introduction to quantum groups. Springer Science and Business Media, 2010.

 \bibitem{Sch} Schneider, Hans-J$\ddot{\text{u}}$rgen. Normal basis and transitivity of crossed products for Hopf algebras. Journal of Algebra 152.2 (1992): 289-312.

\bibitem{SchH} Schneider, Hans-J$\ddot{\text{u}}$rgen. Some properties of factorizable Hopf algebras. Proceedings of the American Mathematical Society 129.7 (2001): 1891-1898.

\bibitem{CHFO} Chen, Huixiang, Fred Van Oystaeyen, and Yinhuo Zhang. The green rings of taft algebras. Proceedings of the American Mathematical Society.-Providence, RI, sa 142.3 (2014): 765-775.


\bibitem{Shi} Shimizu, Kenichi. Non-degeneracy conditions for braided finite tensor categories. Advances in Mathematics 355 (2019): 106778.


\bibitem{ZKM} Zhou, Kun. Modular tensor categories arising from central extensions and related applications. Journal of Algebra 662 (2025): 832-851.


\bibitem{Shi1} Shimizu, Kenichi. Recent developments of the categorical Verlinde formula. Proceedings of Meeting for Study of Number Theory, Hopf Algebras and Related Topics. Vol. 1. 2019.


\bibitem{TMf} Takeuchi, Mitsuhiro. Finite Hopf algebras in braided tensor categories. Journal of Pure and Applied Algebra 138.1 (1999): 59-82.

\bibitem{TMs} Takeuchi, Mitsuhiro. Survey of braided Hopf algebras. Contemporary Mathematics 267 (2000): 301-324.


\bibitem{Fre} Freedman, Michael, et al. Topological quantum computation. Bulletin of the American Mathematical Society 40.1 (2003): 31-38.

\bibitem{Mug} M$\ddot{\text{u}}$ger, Michael. On the structure of modular categories. Proceedings of the London Mathematical Society 87.2 (2003): 291-308.

\bibitem{NV} Reshetikhin, Nicolai, and Vladimir G. Turaev. Invariants of 3-manifolds via link polynomials and quantum groups. Inventiones mathematicae 103.1 (1991): 547-597.

\bibitem{AN1} Andruskiewitsch, Nicol$\acute{\text{a}}$s. Notes on extensions of Hopf algebras. (1994).

\bibitem{AN-que} N. Andruskiewitsch, About finite dimensional Hopf algebras, Quantum Symmetries in Theoretical Physics and Mathematics (Bariloche, 2000), Contemp. Math., vol. 294, American Mathematical Society, Rhode Island, 2002, pp. 1-57

\bibitem{ANPS} Andruskiewitsch, Nicol$\acute{\text{a}}$s, Pavel Etingof, and Shlomo Gelaki. Triangular Hopf algebras with the Chevalley property. Michigan Mathematical Journal 49.2 (2001): 277-298.

\bibitem{EP} Etingof, Pavel, et al. Tensor categories. Vol. 205. American Mathematical Soc., 2016.

\bibitem{PNO} Etingof, Pavel, Dmitri Nikshych, and Victor Ostrik. Weakly group-theoretical and solvable fusion categories. Advances in Mathematics 226.1 (2011): 176-205.

\bibitem{EPST} Etingof, Pavel, and Shlomo Gelaki. The classification of triangular semisimple and cosemisimple Hopf algebras over an algebraically closed field. International Mathematics Research Notices 2000.5 (2000): 223-234.

\bibitem{PGTC} Etingof, Pavel, and Shlomo Gelaki. The classification of finite-dimensional triangular Hopf algebras over an algebraically closed field of characteristic 0. Moscow Mathematical Journal 3.1 (2003): 37-43.

\bibitem{PGs} Etingof, Pavel, and Shlomo Gelaki. Semisimple Hopf algebras of dimension $pq$ are trivial. Journal of Algebra 210.2 (1998): 664-669.

\bibitem{BM} Blattner, Robert, and M. Susan Montgomery. Crossed products and Galois extensions of Hopf algebras. Pacific Journal of Mathematics 137.1 (1989): 37-54.

\bibitem{BMS} Blattner, Robert J., Miriam Cohen, and Susan Montgomery. Crossed products and inner actions of Hopf algebras. Transactions of the American Mathematical Society 298.2 (1986): 671-711.

\bibitem{LC} Laugwitz, Robert, and Chelsea Walton. Constructing non-semisimple modular categories with relative monoidal centers. International Mathematics Research Notices 2022.20 (2022): 15826-15868.

\bibitem{BSR} Burciu, Sebastian. Representations and conjugacy classes of semisimple quasitriangular Hopf algebras. SIGMA. Symmetry, Integrability and Geometry: Methods and Applications 16 (2020): 039.

\bibitem{SSQ} Sawin, Stephen F. Quantum groups at roots of unity and modularity. Journal of Knot Theory and Its Ramifications 15.10 (2006): 1245-1277.

\bibitem{NP-1} Ng, Siu-Hung, and Peter Schauenburg. Central invariants and higher indicators for semisimple quasi-Hopf algebras. Transactions of the American Mathematical Society 360.4 (2008): 1839-1860.

\bibitem{MSa}  Majid, Shahn. Algebras and Hopf algebras in braided categories. Advances in Hopf algebras. CRC Press, 2023. 55-105.

\bibitem{MSaF} Majid, Shahn. Foundations of quantum group theory. Cambridge university press, 2000.

\bibitem{Nar} Natale, Sonia. R-matrices and Hopf algebra quotients. International Mathematics Research Notices 2006.9 (2006): 47182-47182.

\bibitem{Naq} Natale, Sonia. Quasitriangular Hopf algebras of dimension $pq$. Bulletin of the London Mathematical Society 34.3 (2002): 301-307.

\bibitem{Na2} Natale, Sonia. On semisimple Hopf algebras of dimension $pq^2$. Journal of Algebra 221.1 (1999): 242-278.

\bibitem{MS} Montgomery, Susan. Hopf algebras and their actions on rings. No. 82. American Mathematical Soc., 1993.

\bibitem{Ker} Kerler, Thomas, and Volodymyr V. Lyubashenko. Non-semisimple topological quantum field theories for 3-manifolds with corners. Vol. 1765. Springer Science and Business Media, 2001.

\bibitem{VG} Drinfel'd, Vladimir Gershonovich. Quantum groups. Journal of Soviet mathematics 41 (1988): 898-915.

\bibitem{TV1} Turaev, Vladimir G. Quantum invariants of knots and 3-manifolds. de Gruyter, 2010.

\bibitem{LV1} Lyubashenko, Volodimir. Modular transformations for tensor categories. Journal of Pure and Applied Algebra 98.3 (1995): 279-327.

\bibitem{LV2} Lyubashenko, Volodymyr. Modular properties of ribbon abelian categories. Conference A Mathematics and Theoretical Physics. De Gruyter, 2011.

\bibitem{LV3} Lyubashenko, Volodimir. Tangles and Hopf algebras in braided categories. Journal of Pure and Applied Algebra 98.3 (1995): 245-278.

\bibitem{ZY}  Zhu, Yongchang. Hopf algebras of prime dimension. International Mathematics Research Notices 1994.1 (1994): 53-59.
\end{thebibliography}
\end{document}